\documentclass[11pt]{article}

\usepackage{enumerate, fullpage}
\usepackage{graphicx}
\usepackage{multirow}
\usepackage{color}
\usepackage{amsmath}
\usepackage{amsfonts}
\usepackage{amssymb}
\usepackage{mathrsfs}
\usepackage{mathptmx}
\usepackage{eucal,bm}
\usepackage{algorithm}%
\usepackage{algorithmicx}%
\usepackage{algpseudocode}%
\usepackage{smile}
\usepackage{tikz}
\usetikzlibrary{decorations.pathmorphing,patterns}
\usepackage{dutchcal}
\usepackage{appendix}
\usepackage{subfigure}
\usepackage[subfigure]{tocloft}
\allowdisplaybreaks[4]
\usepackage{pdflscape} 
\usepackage{xurl}

\newcommand{\citep}{\cite}

\def\inprod#1#2{\langle#1,#2\rangle}

\usepackage{tcolorbox}

\usepackage{tocloft}
\usepackage[colorlinks=true,linkcolor=red,urlcolor=blue,citecolor=blue]{hyperref}
\hypersetup{breaklinks=true}

\numberwithin{equation}{section}

\newcommand{\beq}{\begin{equation}}
\newcommand{\eeq}{\end{equation}}

\newcommand{\beqnr}{\begin{eqnarray}}
\newcommand{\eeqnr}{\end{eqnarray}}

\newcommand{\benum}{\begin{enumerate}}
\newcommand{\eenum}{\end{enumerate}}


\newtheorem{PB}{Problem}

\newtheorem{THM}{Theorem}
\newtheorem{CO}{Corollary}




%
%

\setlength{\topmargin}{0.00in}     
\setlength{\textheight}{8.5in}      
\setlength{\textwidth}{6.5in}       
\setlength{\oddsidemargin}{0.0in}  
\baselineskip 10mm                   

\begin{document}

\title{\huge {Acceleration via Perturbations on Low-resolution Ordinary Differential Equations}\thanks{Authors are listed in alphabetical order.}}
\date{}

\author{Xudong Li\thanks{School of Data Science, Fudan University, Shanghai 200433, China ({\tt  lixudong@fudan.edu.cn}). The research of Xudong Li is
supported in part by the National Key R\&D Program of China 2023YFA1009300 and the National Natural Science Foundation of China 12531014 and 12271107.}
~~~~Lei Shi\thanks{School of Mathematical Sciences, Shanghai Key Laboratory for Contemporary
Applied Mathematics and Center for Applied Mathematics, Fudan University, Shanghai 200433, China ({\tt   leishi@fudan.edu.cn}).  The research of Lei Shi is
supported in part by the National Natural Science Foundation of China 12171093. }
~~~~Mingqi Song\thanks{School of Mathematical Sciences, Fudan University, Shanghai 200433, China ({\tt  23110180034@m.fudan.edu.cn}).}
}

\maketitle

\vskip 0.5cm

\begin{abstract}
    Recently, the high-resolution ordinary differential equation (ODE) framework, which retains higher-order terms, has been proposed to analyze gradient-based optimization algorithms. Through this framework, the term $\nabla^2 f(X_t)\dot{X_t}$, known as the gradient-correction term, was found to be essential for reducing oscillations and accelerating the convergence rate of function values. Despite the importance of this term, simply adding it to the low-resolution ODE may sometimes lead to a slower convergence rate.  To fully understand this phenomenon, we propose a generalized perturbed ODE and analyze the role of the gradient and gradient-correction perturbation terms under both continuous-time and discrete-time settings. We demonstrate that while the gradient-correction perturbation is essential for obtaining accelerations, it can hinder the convergence rate of function values in certain cases. However, this adverse effect can be mitigated by involving an additional gradient perturbation term. Moreover, by conducting a comprehensive analysis, we derive proper choices of perturbation parameters. Numerical experiments are also provided to validate our theoretical findings.
\end{abstract}

{\bf Keywords:} Accelerated algorithms; Ordinary differential equation; Lyapunov function; Perturbations 

\section{Introduction}
The swift progression of machine learning contributes to notable advancements in first-order optimization methods. Accelerated first-order methods garner significant attention due to their ability to achieve faster iteration complexity without introducing additional computational overhead compared to their non-accelerated counterparts. A seminal contribution in this domain is Nesterov's accelerated method \cite{Nesterov_1983_method, Nesterov_2018_lectures}. However, the derivations presented therein are often considered counterintuitive and rely heavily on case-specific algebraic manipulations \cite{Juditsky_2013_convex}, thus highlighting the need for a deeper understanding of the acceleration phenomenon.

While there exists a long history linking optimization algorithms with trajectories of ordinary differential equations (ODEs) \cite{Helmke_1996_dynamic, Schropp_2000_system, Fiori_2005_quasi}, it was only recently that Su et al. \cite{Su_2014_differential, Attouch_2022_first-order} effectively connected Nesterov's accelerated scheme for solving smooth convex problems with a specially crafted second-order ODE. Since this groundbreaking work, many subsequent studies \cite{Krichene_2015_accelerated, Wibisono_2016_variational,
Wilson_2021_lyapunov,Yang_2018_physical, Attouch_2022_first-order}
 have endeavored to offer deeper insights and enhanced understanding of the acceleration schemes from the perspective of ODEs. Among these studies, the work \cite{Yang_2018_physical}  drew analogies between the differential equations of some popular algorithms and damped oscillator systems, offering valuable physical insights. 
Quite recently, it was observed in \cite{Shi_2022_understanding} that the continuous ODEs corresponding to the trajectory $X(t)$, derived following the approach in \cite{Su_2014_differential, Wibisono_2016_variational} for two fundamentally different algorithms—Nesterov's accelerated gradient method for $\mu$-strongly convex functions (NAG-SC) and Polyak's heavy-ball method—are identically taking the following form:
\begin{equation}
    \label{low resolution for strong convex}
    \ddot{X_t}+2\sqrt{\mu}\dot{X_t}+\nabla f(X_t)=0,
\end{equation}
where $f(x)$ is a smooth $\mu$-strongly convex function to be minimized and  
the following notation are used: \[X_t=X(t), \quad \dot{X_t}=\frac{\mathrm{d} X_t}{\mathrm{d} t}, \quad \ddot{X_t}=\frac{\mathrm{d}^2 X_t}{\mathrm{d} t^2}.\]
This indicates that the continuous approach promoted in \cite{Su_2014_differential}  may not fully describe the behaviors of discrete accelerated algorithms.
By preserving higher-order terms, in  \cite{Shi_2022_understanding}, the authors  derive the following {\em high-resolution} ODEs 
 \begin{equation}
    \label{high resolution for NAG-SC}
    \ddot{X_t}+2\sqrt{\mu}\dot{X_t}+(1+\sqrt{\mu s})\nabla f(X_t)+\sqrt{s}\nabla^2 f(X_t)\dot{X_t}=0,
\end{equation}
and
\begin{equation}
    \label{high resolution for heavy ball}
    \ddot{X_t}+2\sqrt{\mu}\dot{X_t}+(1+\sqrt{\mu s})\nabla f(X_t)=0,
\end{equation}
where $s$ is the step size in the discrete algorithms, as more accurate surrogates for NAG-SC and the heavy-ball method, respectively. Compared to the low-resolution ODE \eqref{low resolution for strong convex}, the two ODEs \eqref{high resolution for NAG-SC} and \eqref{high resolution for heavy ball} contain extra high order terms $\sqrt{\mu s}\nabla f(X_t)$ and $
\sqrt{s}\nabla^2 f(X_t)\dot{X}_t$, and thus possess more potential in characterizing the performance of NAG-SC and the heavy-ball method. 
As one can observe, the key difference between the high-resolution ODEs of NAG-SC \eqref{high resolution for NAG-SC} and the heavy-ball method \eqref{high resolution for heavy ball} lies in an extra term $\sqrt{s}\nabla^2 f(X_t)\dot{X_t}$, referred to as the gradient correction term, in \eqref{high resolution for NAG-SC}. In \cite{Shi_2022_understanding}, it is emphasized that this term is essential for acceleration. 
Interestingly, an alternative line of research \cite{Alvarez_2000_minimizing, Attouch_2012_second-order, Attouch_2014_dynamical, Attouch_2022_first-order} also highlights the term $\nabla^2 f(X_t)\dot{X_t}$, where it is coined as the Hessian-driven damping term. Unlike the approach in \cite{Shi_2022_understanding}, this line of work derives the term by leveraging second-order information obtained via the Newton method. The pivotal role of the gradient correction term $\nabla^2 f(X_t)\dot{X_t}$ in accelerating optimization algorithms is further underscored in the existing literature.
For instance, this term can effectively neutralize oscillations, as demonstrated in \cite{Attouch_2022_first-order}, and is crucial for achieving a rapid convergence rate of ${\cal o}(1/k^3)$ in the gradient norm minimization of Nesterov's accelerated gradient method for minimizing convex functions (NAG-C) \cite{Chen_2022_gradient}, as well as in the proximal subgradient norm minimization of FISTA \cite{Li_2022_proximal}.

Although the existing literature underscores the crucial impact of the gradient correction term $\nabla^2 f(X_t)\dot{X_t}$, an intriguing anomaly arises wherein the mere inclusion of this term into the low-resolution ODE may paradoxically decrease the convergence rate of the function value, for example the system $(DIN)_{2\sqrt{\mu},\beta}$ in \cite{Attouch_2022_first-order}. Specifically, for any given $\beta \in [0,1/2\sqrt{\mu}]$, $(DIN)_{2\sqrt{\mu},\beta}$ is referred to as an inertial system for minimizing a $\mu$-strongly convex function $f$:
\begin{equation}
    \label{eq: a similar perturbed ODE for strongly convex}
    \ddot{X_t}+2\sqrt{\mu}\dot{X_t}+\beta \nabla^2 f(X_t)\dot{X_t}+\nabla f(X_t)=0.
\end{equation}
The convergence rate of the function value $f(X_t)-f(x^*)$ for \eqref{eq: a similar perturbed ODE for strongly convex} derived in \cite[{Theorem 7(i)}]{Attouch_2022_first-order} is ${\cal O}(e^{-\frac{\sqrt{\mu}}{2}t})$, 
 while the convergence rate of the same quantity for the corresponding low-resolution ODE \eqref{low resolution for strong convex} is a faster decay rate ${\cal O}(e^{-\sqrt{\mu}t})$ \cite[{Proposition 3}]{Wilson_2021_lyapunov}.
In contrast, the high-resolution ODE \eqref{high resolution for NAG-SC} also contains the gradient correction term but exhibits the same convergence rate ${\cal O}(e^{-\sqrt{\mu}t})$ as that of \eqref{low resolution for strong convex}. We also note that the main distinction between \eqref{eq: a similar perturbed ODE for strongly convex} and \eqref{high resolution for NAG-SC} resides in the presence of the perturbation from $\nabla f(X_t)$ within \eqref{high resolution for NAG-SC}. Consequently, the following question naturally arises:
\begin{PB}
    \label{pb: the problems of this paper}
    Does the presence of the gradient correction term $\nabla^2 f(X_t)\dot{X_t}$ adversely affect the convergence rate of the function value? If so, what strategies can be employed to mitigate or eliminate this negative influence?
\end{PB}
In this paper, we address the above problem from the perturbation perspective and propose to study the following perturbed version of \eqref{low resolution for strong convex}:
\begin{equation}
\label{eq:general_perturbed_ODE}
\ddot{X_t}+2\sqrt{\mu}\dot{X_t}+(1+\Delta_1)\nabla f(X_t)+\Delta_2\nabla^2 f(X_t)\dot{X_t}=0,
\end{equation}
where $\Delta_1$, $\Delta_2$ are two nonnegative constants. 
This perturbed ODE extends the system  \eqref{eq: a similar perturbed ODE for strongly convex} and covers both the high-resolution ODEs for NAG-SC \eqref{high resolution for NAG-SC} and the heavy-ball method \eqref{high resolution for heavy ball}.  

We begin by offering intuitive interpretations of the gradient perturbation \( \Delta_1 \nabla f(X_t) \) and the gradient-correction perturbation \( \Delta_2 \nabla^2 f(X_t) \dot{X_t} \) in \eqref{eq:general_perturbed_ODE}.
Specifically, we connect the general perturbed ODE \eqref{eq:general_perturbed_ODE} with a damped oscillator system, a perturbed version of the physical system studied in \cite{Yang_2018_physical}. 
In our model, the term $\Delta_1\nabla f(X_t)$ reinforces the system's resilience, accelerating the particle's return to the equilibrium position, but may increase the oscillations. Meanwhile, the term $\Delta_2\nabla^2 f(X_t)\dot{X_t}$ can be viewed as a force resulting from the change in the impulse $\Delta_2\nabla f(X_t)$. It is negative if the resilience is decreasing. This may slow down the particle's approach to the equilibrium position, but it is beneficial for reducing oscillations. Intuitively, properly combining these two terms may accelerate the convergence. Indeed, we can demonstrate that incorporating only $\Delta_1\nabla f(X_t)$ does not slow down the convergence rate of $f(X_t)-f(x^*)$, specifically, $f(X_t)-f(x^*)={\cal O}(e^{-\sqrt{\mu}t})$.  {In contrast, incorporating only $\Delta_2\nabla ^2 f(X_t)\dot{X_t}$ may decrease the convergence rate.} Moreover, we show that when $\Delta_1$ and $\Delta_2$ are both positive and a proper ratio between them is maintained, the convergence rate of $f(X_t)-f(x^*)$ can even exceed $\cal{O}(e^{-\sqrt{\mu}t})$. 
See Section \ref{sec: continuous case} for more discussions.

Based on the above theoretical advances, we take a step further to study the optimization algorithms, as well as the corresponding perturbation terms, resulting from discretizations of \eqref{eq:general_perturbed_ODE}. For this purpose, we briefly review popular discretizations used in the literature. 
Notably, the Runge-Kutta scheme, the symplectic integration of Hamiltonian systems, the explicit Euler, symplectic Euler, and implicit Euler discretizations have been investigated in  \cite{Zhang_2018_direct, Betancourt_2018_symplectic,  Shi_2019_acceleration, Zhang_2021_revisiting}, respectively. Among these, {symplectic and implicit schemes} exhibit characteristics of simplicity in form, convenience in analysis, and excellent numerical performance. Therefore, we focus on the discrete optimization algorithms obtained by discretizing \eqref{eq:general_perturbed_ODE} using the implicit and symplectic Euler schemes.
Proper conditions on the perturbation parameters $\Delta_1$ and $\Delta_2$  
are proposed to ensure the acceleration of the resulting discrete algorithms, and a new class of accelerated algorithms for minimizing strongly convex functions is derived. We also examine the roles of $\Delta_1\nabla f(x_{k})$ and $\Delta_2(\nabla f(x_{k+1})-\nabla f(x_k))/\sqrt{s}$, which correspond to the discretizations of $\Delta_1\nabla f(X_t)$ and $\Delta_2\nabla^2 f(X_t)\dot{X_t}$. 
For implicit Euler discretization, the two aforementioned terms play roles analogous to their continuous counterparts. However, the situation becomes more intricate for symplectic Euler discretization. We show that in this case, the gradient perturbation $\Delta_1\nabla f(x_k)$ alone is insufficient to ensure a fast convergence rate, and the gradient-correction perturbation $\Delta_2 (\nabla f(x_{k+1})-\nabla f(x_k))/\sqrt{s}$ is crucial for achieving acceleration. Nevertheless, adding only the gradient-correction perturbation may, in some cases—such as when minimizing a strongly convex quadratic function—slow down the convergence rate. The gradient perturbation $\Delta_1 \nabla f(x_k)$ plays a vital role in counteracting this potential drawback.

The main contributions of our paper are summarized below:
\begin{enumerate}
    \item We propose a general perturbed ODE \eqref{eq:general_perturbed_ODE} and analyze the role of the two perturbations $\Delta_1\nabla f(X_t)$ and $\Delta_2\nabla^2 f(X_t)\dot{X_t}$. We highlight that in certain cases, the gradient correction perturbation $\Delta_2\nabla^2 f(X_t)\dot{X_t}$ may negatively impact the convergence rate of function values. A slight involvement of the gradient perturbation $\Delta_1\nabla f(X_t)$ can mitigate this effect.
    \item We study implicit and symplectic Euler discretizations of the perturbed ODE \eqref{eq:general_perturbed_ODE}. A comprehensive analysis of these discretized schemes is conducted, and appropriate choices for the perturbation parameters $\Delta_1$ and $\Delta_2$ are provided.
    
\end{enumerate}

\subsection*{Organization and notations}
We organize the remainder of the paper as follows. In Section \ref{sec: continuous case}, we study the roles of two perturbation terms, $\Delta_1 \nabla f(X_t)$ and $\Delta_2 \nabla^2 f(X_t)\dot{X_t}$, in \eqref{eq:general_perturbed_ODE} from a physical perspective and provide the corresponding proofs. In Section \ref{sec: discrete case}, we analyze the implicit Euler and the discrete symplectic Euler discretization schemes of the perturbed ODE \eqref{eq:general_perturbed_ODE} and discuss the roles of gradient and gradient-correction perturbations.
In Section \ref{sec: numerical experiments}, some preliminary numerical experiments are provided to validate our theoretical results. Lastly, in Section \ref{sec: discussion}, we conclude the paper.

Throughout the paper, we use $\langle \cdot, \cdot \rangle$, and $\|\cdot\|$ to denote the inner product and induced norm in a real finite-dimensional Hilbert space $\cH$, respectively.
We also use $\mathcal{C}^1$ and $\mathcal{C}^2$ to denote the sets of first-order and second-order continuously differentiable functions, respectively.
A function $f\in \mathcal{C}^1$ is said to be $L$-smooth if  $\|\nabla f(x)-\nabla f(y)\|\leqslant L\|x-y\|, \forall x,y \in {\cal H}$, and is said to be $\mu$-strongly convex if  $f(y)-f(x)\geqslant \langle \nabla f(x), y-x \rangle + \mu \|y-x\|^2/2, \forall x,y \in \cH$. 
In this paper, we focus on the following minimization problem 
\begin{equation}\label{basic-min}
	\min \left\{  f(x) \mid \, x\in\cH \right\}, 
\end{equation}
where  $f\in {\cal C}^1$ is assumed to be $\mu$-strongly convex for some $\mu > 0$. Then, the above minimization problem has only one optimal solution, denoted by $x^*$. 


\section{Perturbed ODE for strongly convex functions}
\label{sec: continuous case}
In this section, we focus on the general ODE model \eqref{eq:general_perturbed_ODE}. When the perturbation parameters $\Delta_1=\Delta_2=0$, \eqref{eq:general_perturbed_ODE} reduces to \eqref{low resolution for strong convex}. As is noted in \cite{Yang_2018_physical}, model \eqref{low resolution for strong convex} with the following equivalent form 
\begin{equation}
    \label{eq: motion of low-resolution ODE}
    \ddot{X_t}=-2\sqrt{\mu}\dot{X_t}-\nabla f(X_t)
\end{equation}
describes a damped oscillator system,
where the particle's mass is unitary, the damping coefficient is $2\sqrt{\mu}$, $X_t$ denotes the position of the particle at time $t$, and the function $f$ represents the potential energy. Similarly,  \eqref{eq:general_perturbed_ODE}, in the following equivalent form
\begin{equation}
    \label{equation of motion}
    \ddot{X_t}=-2\sqrt{\mu}\dot{X_t}-\nabla f(X_t)-\Delta_1\nabla f(X_t)-\Delta_2\nabla^2 f(X_t)\dot{X_t},
\end{equation}
describes a perturbed damped oscillator system.
Compared to \eqref{eq: motion of low-resolution ODE}, the above ODE \eqref{equation of motion} includes two additional terms, $\Delta_1\nabla f(X_t)$ and $\Delta_2\nabla^2 f(X_t)\dot{X_t}$. 
Next, we provide an intuitive understanding of these two terms from the physical perspective.

We start by discussing a simple special horizontal damped spring oscillator described by \eqref{equation of motion} with
 $f(X)=\frac{1}{2}\mathcal{K}X^2$. Here, $\mathcal{K}$ is the Hooke's constant of the spring, and $X$ represents the elongation of the spring.  Let $x^*$ denote the position of the spring at its equilibrium length.
In this context, $x^* = 0$ and the two perturbation terms are $\Delta_1\nabla f(X_t)=\Delta_1\mathcal{K}X_t$, $\Delta_2\nabla^2 f(X_t)\dot{X_t}=\Delta_2\mathcal{K}\dot{X_t}$. As shown in Figure \ref{fig:4_stage_motion}, the motion of the object in the system can be divided into four distinct stages.  In the following, we examine the impact of the two perturbation terms on the motion of the object in each of these four stages.

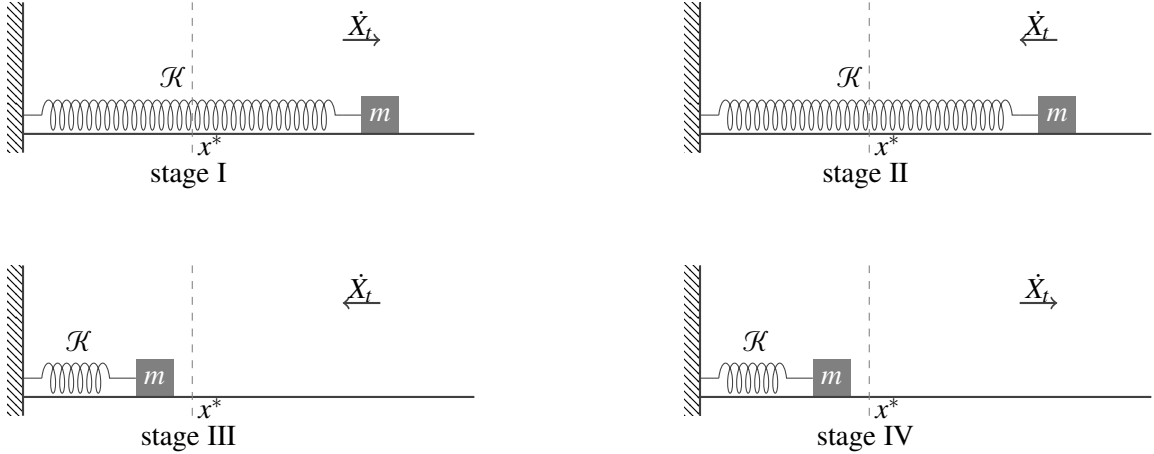
\begin{figure}	
    \begin{tikzpicture}[black!75]
		
		\fill [pattern = north west lines] (-0.2,0) rectangle (0,2);
		\draw[thick] (0,0) -- (0,2);
		
		\draw[] (0,0.5) -- (0.25,0.5);
		\draw[decoration={aspect=0.3, segment length=1.2mm, amplitude=2mm,coil},decorate] (0.25,0.5) -- (4.25,0.5);
        \draw[] (4.25,0.5) -- (4.5,0.5);
        \fill [gray] (4.5,0.25) rectangle (5,0.75);
        \draw[thick] (0,0.25)--(6,0.25);
        \node [white] at (4.75,0.5) {$m$};
        \node[black] at (2,1) {$\mathcal{K}$};
        \draw[dashed,gray] (2.25,0)--(2.25,2);
        \node[black] at (2.5,0.1) {$x^*$};
        \draw[->,thick](4.25,1.5)--(4.75,1.5);
        \node[black] at (4.5,1.7) {$\dot{X_t}$};
        \node[black] at (2.2,-0.3) {stage \uppercase\expandafter{\romannumeral1}};
		
		\begin{scope}[xshift=9cm]
		\fill [pattern = north west lines] (-0.2,0) rectangle (0,2);
		\draw[thick] (0,0) -- (0,2);
		
		\draw[] (0,0.5) -- (0.25,0.5);
		\draw[decoration={aspect=0.3, segment length=1.2mm, amplitude=2mm,coil},decorate] (0.25,0.5) -- (4.25,0.5);
        \draw[] (4.25,0.5) -- (4.5,0.5);
        \fill [gray] (4.5,0.25) rectangle (5,0.75);
        \draw[thick] (0,0.25)--(6,0.25);
        \node [white] at (4.75,0.5) {$m$};
        \node[black] at (2,1) {$\mathcal{K}$};
        \draw[dashed,gray] (2.25,0)--(2.25,2);
        \node[black] at (2.5,0.1) {$x^*$};
        \draw[<-,thick](4.25,1.5)--(4.75,1.5);
        \node[black] at (4.5,1.7) {$\dot{X_t}$};
        \node[black] at (2.2,-0.3) {stage \uppercase\expandafter{\romannumeral2}};
		\end{scope}
		
		\begin{scope}[yshift=-3.5cm]
		\fill [pattern = north west lines] (-0.2,0) rectangle (0,2);
		\draw[thick] (0,0) -- (0,2);
		
		\draw[] (0,0.5) -- (0.25,0.5);
		\draw[decoration={aspect=0.3, segment length=1.2mm, amplitude=2mm,coil},decorate] (0.25,0.5) -- (1.25,0.5);
        \draw[] (1.25,0.5) -- (1.5,0.5);
        \fill [gray] (1.5,0.25) rectangle (2,0.75);
        \draw[thick] (0,0.25)--(6,0.25);
        \node [white] at (1.75,0.5) {$m$};
        \node[black] at (0.75,1) {$\mathcal{K}$};
        \draw[dashed,gray] (2.25,0)--(2.25,2);
        \node[black] at (2.5,0.1) {$x^*$};
		\draw[<-,thick](4.25,1.5)--(4.75,1.5);
        \node[black] at (4.5,1.7) {$\dot{X_t}$};
        \node[black] at (2.2,-0.3) {stage \uppercase\expandafter{\romannumeral3}};
		\end{scope}
		
		\begin{scope}[yshift=-3.5cm,xshift=9cm]
		\fill [pattern = north west lines] (-0.2,0) rectangle (0,2);
		\draw[thick] (0,0) -- (0,2);
		
		\draw[] (0,0.5) -- (0.25,0.5);
		\draw[decoration={aspect=0.3, segment length=1.2mm, amplitude=2mm,coil},decorate] (0.25,0.5) -- (1.25,0.5);
        \draw[] (1.25,0.5) -- (1.5,0.5);
        \fill [gray] (1.5,0.25) rectangle (2,0.75);
        \draw[thick] (0,0.25)--(6,0.25);
        \node [white] at (1.75,0.5) {$m$};
        \node[black] at (0.75,1) {$\mathcal{K}$};
        \draw[dashed,gray] (2.25,0)--(2.25,2);
        \node[black] at (2.5,0.1) {$x^*$};
		\draw[->,thick](4.25,1.5)--(4.75,1.5);
        \node[black] at (4.5,1.7) {$\dot{X_t}$};
        \node[black] at (2.2,-0.3) {stage \uppercase\expandafter{\romannumeral4}};
		\end{scope}
		
	\end{tikzpicture}
\caption{An illustration of four stages of a horizontal damped spring oscillator described by \eqref{equation of motion} with
	$f(X)=\frac{1}{2}\mathcal{K}X^2$.}
    \label{fig:4_stage_motion}
\end{figure}

At \textbf{stage \uppercase\expandafter{\romannumeral1}}, the spring is stretched, and the particle's velocity is directed to the right. The directions of $-\Delta_1\nabla f(X_t)$ and $-\Delta_2\nabla^2 f(X_t)\dot{X_t}$ point to the left. Therefore, at this stage, both perturbation terms accelerate the particle back towards $x^*$. At \textbf{stage \uppercase\expandafter{\romannumeral2}}, the spring remains stretched, but the velocity of the particle is now directed to the left. The directions of $-\Delta_1\nabla f(X_t)$ and $-\Delta_2\nabla^2 f(X_t)\dot{X_t}$ are to the left and right respectively. As a result, $-\Delta_1\nabla f(X_t)$
still accelerates the particle back towards $x^*$, while $\Delta_2\nabla^2 f(X_t)\dot{X_t}$ acts as a ``brake", helping the particle decelerate and stop right at $x^*$. At \textbf{stage \uppercase\expandafter{\romannumeral3}}, the spring is compressed, and the particle's velocity is directed to the left. The directions of $-\Delta_1\nabla f(X_t)$ and $-\Delta_2\nabla^2 f(X_t)\dot{X_t}$ are both directed to the right. Hence, at this stage, the effect of both perturbation terms mirrors that of {\bf stage} \uppercase\expandafter{\romannumeral1}, pushing the particle back towards $x^*$. Finally, at \textbf{stage \uppercase\expandafter{\romannumeral4}}, the spring is compressed, and the particle's velocity is directed to the right. The directions of $-\Delta_1\nabla f(X_t)$ and $-\Delta_2\nabla^2f(X_t)\dot{X_t}$ are to the right and the left, respectively. So, the effects of the two perturbation terms are similar to those in {\bf stage} \uppercase\expandafter{\romannumeral2}.
In summary, the term $-\Delta_2 \nabla^2 f(X_t) \dot{X_t}$ accelerates the particle as it moves away from $x^*$ and decelerates it as it approaches $x^*$, effectively reducing oscillations.
On the other hand, 
regardless of whether the particle is moving away from $x^*$ or towards $x^*$, as long as the object deviates from $x^*$, $-\Delta_1\nabla f(X_t)$ will accelerate its return to $x^*$. Based on these observations, if only $-\Delta_2 \nabla^2 f(X_t) \dot{X_t}$ is present, the deceleration as the particle approaches $x^*$ could slow its convergence rate. Conversely, if only $-\Delta_1 \nabla f(X_t)$ is present, the particle may struggle to stop near $x^*$, resulting in increased oscillations.
Thus, appropriately combining both terms enables the particle to change its moving direction more quickly when traveling away from $x^*$, while also ensuring it reaches $x^*$ at a satisfying speed with minimal oscillations.

For a more general strongly convex potential energy $f$, we can expect similar roles of the two aforementioned perturbation terms. Indeed, as observed in \cite{Yang_2018_physical}, the properties of a strongly convex energy naturally mimic that of a quadratic potential energy.   
Based on this analysis, intuitively, adding appropriate perturbation terms $\Delta_1\nabla f(X_t)$ and $\Delta_2\nabla^2 f(X_t)\dot{X_t}$ will not slow down the system's convergence rate, and may even accelerate it. 
This intuition is proved in the following two results 
based on a Lyapunov analysis with the Lyapunov function $\mathcal{E}$ defined as:
\begin{equation}
	\label{continuous Lyapunov for strong disturb ODE}
	\mathcal{E}(t) :=  e^{\sqrt{\mu}t}\big((1+\Delta_1)(f(X_t)-f(x^*))+\frac{1}{2}\|\dot{X_t}+\sqrt{\mu}(X_t-x^*)+\Delta_2\nabla f(X_t)\|^2\big).
\end{equation}
The following theorem shows that a properly perturbed ODE exhibits the same convergence rate of $f(X_t) - f(x^*)$ as the unperturbed one (i.e. $f(X_t)-f(x^*)={\cal O}(e^{-\sqrt{\mu}t})$ \cite[Proposition 5]{Wilson_2021_lyapunov}).

\begin{THM}
\label{theorem for general ODE}
Suppose that $f\in{\cal C}^2$ is $\mu$-strongly convex. 
Then, the following inequality holds
\begin{equation*}
    \begin{aligned}
        \label{the inequality of dE(t)/dt}
        \frac{\mathrm{d} \mathcal{E}(t)}{\mathrm{d} t}e^{-\sqrt{\mu}t}\leqslant -\frac{\sqrt{\mu}}{2}\|\dot{X_t}\|^2
        -\frac{\mu\sqrt{\mu}}{2}\Delta_1\|X_t-x^*\|^2+\Delta_2(\frac{\sqrt{\mu}}{2}\Delta_2-\Delta_1)\|\nabla f(X_t)\|^2.
    \end{aligned}
\end{equation*}
If the non-negative perturbation parameters $\Delta_1, \Delta_2$ satisfy
\begin{equation}
	\label{continuous condition}
	0 \leqslant \frac{\sqrt{\mu}}{2}\Delta_2\leqslant \Delta_1,
\end{equation}
then it holds that
\begin{equation}
        f(X_t)-f(x^*)\leqslant \frac{1}{1+\Delta_1}e^{-\sqrt{\mu}t}\mathcal{E}(0).
\end{equation}
Besides, if $\Delta_1=0$, $\Delta_2>0$ and $f$ is $L$-smooth, then the following estimation holds
\begin{equation}
    \label{eq: convergence rate of delta1 = 0, delta2 > 0 in continuous case}
    f(X_t)-f(x^*)\leqslant \frac{1}{1+\Delta_1}e^{-\sqrt{\mu}t(1-\Delta_2^2L)}\mathcal{E}(0).
\end{equation}
\end{THM}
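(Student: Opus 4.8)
The plan is to compute $\frac{\mathrm{d}\mathcal{E}(t)}{\mathrm{d}t}$ directly from the definition \eqref{continuous Lyapunov for strong disturb ODE}, then bound each term using $\mu$-strong convexity and the ODE \eqref{eq:general_perturbed_ODE}. Write $\mathcal{E}(t)=e^{\sqrt{\mu}t}\,\mathcal{V}(t)$ where $\mathcal{V}(t)$ is the bracketed quantity, and let $v_t:=\dot{X_t}+\sqrt{\mu}(X_t-x^*)+\Delta_2\nabla f(X_t)$. Then $\frac{\mathrm{d}\mathcal{E}}{\mathrm{d}t}e^{-\sqrt{\mu}t}=\sqrt{\mu}\,\mathcal{V}(t)+\dot{\mathcal{V}}(t)$. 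The first piece is $\sqrt{\mu}(1+\Delta_1)(f(X_t)-f(x^*))+\frac{\sqrt{\mu}}{2}\|v_t\|^2$. For the second piece, $\dot{\mathcal{V}}(t)=(1+\Delta_1)\langle\nabla f(X_t),\dot{X_t}\rangle+\langle v_t,\dot v_t\rangle$, and crucially $\dot v_t=\ddot{X_t}+\sqrt{\mu}\dot{X_t}+\Delta_2\nabla^2 f(X_t)\dot{X_t}$; substituting the ODE \eqref{eq:general_perturbed_ODE} for $\ddot{X_t}$ collapses this to $\dot v_t=-\sqrt{\mu}\dot{X_t}-(1+\Delta_1)\nabla f(X_t)$. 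This is the key simplification that makes the Hessian terms disappear.

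Next I would expand $\langle v_t,\dot v_t\rangle=\langle \dot{X_t}+\sqrt{\mu}(X_t-x^*)+\Delta_2\nabla f(X_t),\,-\sqrt{\mu}\dot{X_t}-(1+\Delta_1)\nabla f(X_t)\rangle$ and combine with the remaining terms. The $(1+\Delta_1)\langle\nabla f(X_t),\dot{X_t}\rangle$ from $\dot{\mathcal{V}}$ cancels the $-\sqrt{\mu}(1+\Delta_1)\langle\nabla f(X_t),\dot{X_t}\rangle$... more carefully, the cross terms involving $\langle\nabla f(X_t),\dot X_t\rangle$ should cancel, leaving terms in $\|\dot X_t\|^2$, $\langle X_t-x^*,\dot X_t\rangle$, $\langle\nabla f(X_t),X_t-x^*\rangle$, and $\|\nabla f(X_t)\|^2$, plus the $\sqrt{\mu}$-multiple of the first piece. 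The $\frac{\sqrt{\mu}}{2}\|v_t\|^2$ term, when expanded, produces $\frac{\sqrt{\mu}}{2}\|\dot X_t\|^2$, $\frac{\mu\sqrt\mu}{2}\|X_t-x^*\|^2$, $\frac{\sqrt\mu}{2}\Delta_2^2\|\nabla f(X_t)\|^2$ and cross terms; the $-\sqrt{\mu}\|\dot X_t\|^2$ from $\langle v_t,\dot v_t\rangle$ combines with it. After collecting, I expect the $\langle X_t-x^*,\dot X_t\rangle$ terms to cancel and the key inequality to come from applying $\mu$-strong convexity in two forms: $\langle\nabla f(X_t),X_t-x^*\rangle\ge f(X_t)-f(x^*)+\frac{\mu}{2}\|X_t-x^*\|^2$ (to absorb the $\sqrt{\mu}(1+\Delta_1)(f(X_t)-f(x^*))$ into something negative) and $\langle\nabla f(X_t),X_t-x^*\rangle\ge\frac1\mu\|\nabla f(X_t)\|^2$... actually I would use convexity $\langle\nabla f(X_t),X_t-x^*\rangle\ge f(X_t)-f(x^*)$ and strong convexity giving the $\|X_t-x^*\|^2$ slack, landing exactly on the claimed bound
\begin{equation*}
\frac{\mathrm{d}\mathcal{E}(t)}{\mathrm{d}t}e^{-\sqrt{\mu}t}\le -\frac{\sqrt{\mu}}{2}\|\dot X_t\|^2-\frac{\mu\sqrt{\mu}}{2}\Delta_1\|X_t-x^*\|^2+\Delta_2\Big(\frac{\sqrt{\mu}}{2}\Delta_2-\Delta_1\Big)\|\nabla f(X_t)\|^2.
\end{equation*}

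From here the two conclusions are short. Under \eqref{continuous condition}, the coefficient $\frac{\sqrt\mu}{2}\Delta_2-\Delta_1\le 0$, so the entire right-hand side is $\le 0$, hence $\mathcal{E}$ is nonincreasing, giving $\mathcal{E}(t)\le\mathcal{E}(0)$; then $e^{\sqrt{\mu}t}(1+\Delta_1)(f(X_t)-f(x^*))\le\mathcal{E}(t)\le\mathcal{E}(0)$ yields the stated rate. For the case $\Delta_1=0$, $\Delta_2>0$ with $L$-smoothness, I would bound $\|\nabla f(X_t)\|^2=\|\nabla f(X_t)-\nabla f(x^*)\|^2\le L\langle\nabla f(X_t),X_t-x^*\rangle$ (monotonicity/cocoercivity-type bound; more simply $\|\nabla f(X_t)\|\le L\|X_t-x^*\|$ combined with strong convexity, or $\|\nabla f\|^2\le 2L(f(X_t)-f(x^*))$), so that the $\frac{\sqrt\mu}{2}\Delta_2^2\|\nabla f(X_t)\|^2$ term is controlled by $\sqrt{\mu}\Delta_2^2 L(f(X_t)-f(x^*))$, which can be folded back into the exponential decay rate, degrading $e^{-\sqrt\mu t}$ to $e^{-\sqrt{\mu}t(1-\Delta_2^2 L)}$ via a modified Grönwall/Lyapunov argument (equivalently, showing $\frac{\mathrm{d}}{\mathrm{d}t}\big(e^{\sqrt{\mu}(1-\Delta_2^2L)t}\mathcal{V}(t)\big)\le 0$).

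The main obstacle is the bookkeeping in the middle step: correctly tracking all cross terms after substituting the ODE into $\dot v_t$ and verifying that the $\langle\nabla f(X_t),\dot X_t\rangle$ and $\langle X_t-x^*,\dot X_t\rangle$ contributions cancel exactly, leaving only the three squared-norm terms — there is no conceptual difficulty, but a sign error anywhere derails the clean final inequality. A secondary subtlety is choosing the right smoothness-based bound on $\|\nabla f(X_t)\|^2$ in the last part so that the degradation factor is precisely $1-\Delta_2^2 L$ rather than something weaker; I expect $\|\nabla f(X_t)\|^2\le 2L(f(X_t)-f(x^*))$ together with the coefficient $(1+\Delta_1)=1$ (since $\Delta_1=0$) to be what produces exactly that factor.
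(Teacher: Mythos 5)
Your proposal is correct and matches the paper's own proof essentially step for step: the key observation that $\dot v_t$ collapses to $-\sqrt{\mu}\dot{X_t}-(1+\Delta_1)\nabla f(X_t)$ via the ODE, the cancellation of the $\langle\nabla f(X_t),\dot X_t\rangle$ and $\langle X_t-x^*,\dot X_t\rangle$ cross terms, the two applications of strong convexity (the lower bound $\langle\nabla f(X_t),X_t-x^*\rangle\geqslant f(X_t)-f(x^*)+\frac{\mu}{2}\|X_t-x^*\|^2$ on the $-\sqrt{\mu}(1+\Delta_1)$ piece and the upper bound $\mu\langle\nabla f(X_t),X_t-x^*\rangle\leqslant\|\nabla f(X_t)\|^2$ on the $\mu\Delta_2$ piece), and in the $\Delta_1=0$ case the bound $\|\nabla f(X_t)\|^2\leqslant 2L(f(X_t)-f(x^*))$ feeding a Gr\"onwall-type step that degrades the rate to $e^{-\sqrt{\mu}t(1-\Delta_2^2 L)}$. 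The only thing to tighten when writing it out is that after the ODE substitution a $\langle\nabla f(X_t),X_t-x^*\rangle$ cross term and a $(f(X_t)-f(x^*))$ term remain (not just squared norms), and it is precisely the two strong-convexity inequalities that dispose of them.
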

\begin{proof}
{By differentiating $\mathcal{E}(t)$ defined in \eqref{continuous Lyapunov for strong disturb ODE} and multiplying both sides by $e^{-\sqrt{\mu}t}$, and recalling \eqref{eq:general_perturbed_ODE}, we have
\begin{equation*}
    \begin{aligned}
        \frac{\mathrm{d}
        \mathcal{E}(t)}{\mathrm{d} t}e^{-\sqrt{\mu}t}
        =&\sqrt{\mu}(1+\Delta_1)\big(f(X_t)-f(x^*)\big)+\frac{\sqrt{\mu}}{2}\|\dot{X_t}+\sqrt{\mu}(X_t-x^*)+\Delta_2\nabla f(X_t)\|^2\\
        &+(1+\Delta_1)\langle \nabla f(X_t),\dot{X_t} \rangle+\langle \dot{X_t}+\sqrt{\mu}(X_t-x^*)+\Delta_2\nabla f(X_t),-\sqrt{\mu}\dot{X_t}-(1+\Delta_1)\nabla f(X_t)\rangle\\
        =&\sqrt{\mu}(1+\Delta_1)\big(f(X_t)-f(x^*)\big)-\frac{\sqrt{\mu}}{2}\|\dot{X_t}\|^2+\frac{\mu\sqrt{\mu}}{2}\|X_t-x^*\|^2
        \\&+\sqrt{\mu}\big(\sqrt{\mu}\Delta_2-(1+\Delta_1)\big)\langle \nabla f(X_t), X_t-x^*\rangle +\Delta_2\big(\frac{\sqrt{\mu}}{2}\Delta_2-(1+\Delta_1)\big)\|\nabla f(X_t)\|^2.
    \end{aligned}
\end{equation*}
}
The $\mu$-strong convexity of $f$ and the optimality of $x^*$ imply that
\begin{equation*}
    \left\{
    \begin{aligned}
        & f(X_t)+\langle \nabla f(X_t), x^*-X_t \rangle + \frac{\mu}{2}\|X_t-x^*\|^2 \leqslant f(x^*),\\
        & \mu \langle \nabla f(X_t), X_t-x^* \rangle  \leqslant \|\nabla f(X_t)\|^2.
    \end{aligned}
    \right. 
\end{equation*}
Therefore, it follows that
\begin{align}
    \frac{\mathrm{d} \mathcal{E}(t)}{\mathrm{d} t}e^{-\sqrt{\mu}t}
        \leqslant & -\frac{\sqrt{\mu}}{2}\|\dot{X_t}\|^2-\frac{\mu\sqrt{\mu}}{2}\Delta_1\|X_t-x^*\|^2+\Delta_2(\frac{\sqrt{\mu}}{2}\Delta_2-\Delta_1)\|\nabla f(X_t)\|^2 \label{eq: proof process of theorem 1}.
\end{align}
By integrating \eqref{eq: proof process of theorem 1}, we see that
\begin{equation*}
    \mathcal{E}(t)+\int_{0}^t e^{\sqrt{\mu}u}\big(\Delta_2(\Delta_1-\frac{\sqrt{\mu}}{2}\Delta_2)\|\nabla f(X_u)\|^2 +\frac{\mu\sqrt{\mu}}{2}\Delta_1\|X_u-x^*\|^2+\frac{\sqrt{\mu}}{2}\|\dot{X_u}\|^2 \big)\,du \leqslant \mathcal{E}(0).
\end{equation*}
Now, if $0 \leqslant \sqrt{\mu}\Delta_2/ 2 \leqslant  \Delta_1$, we know that 
\begin{equation*}
    f(X_t)-f(x^*)\leqslant \frac{1}{1+\Delta_1}e^{-\sqrt{\mu}t}\mathcal{E}(t)\leqslant \frac{1}{1+\Delta_1}e^{-\sqrt{\mu}t}\mathcal{E}(0).
\end{equation*}

Now, if $f$ is $L$-smooth, it holds from the optimality of $x^*$ that 
\begin{equation*}
    \|\nabla f(X_t)\|^2 \leqslant 2L\big(f(X_t)-f(x^*)\big).
\end{equation*}
Therefore, \eqref{eq: proof process of theorem 1}  with $\Delta_1=0$ further implies that 
\begin{align*}
    \frac{\mathrm{d} \mathcal{E}(t)}{\mathrm{d} t}e^{-\sqrt{\mu}t}\leqslant & -\frac{\sqrt{\mu}}{2}\|\dot{X_t}\|^2+\frac{\sqrt{\mu}}{2}\Delta_2^2\|\nabla f(X_t)\|^2\\
    \leqslant & \sqrt{\mu}\Delta_2^2L \big(f(X_t)-f(x^*)\big)\\
    \leqslant & \sqrt{\mu}\Delta_2^2 L e^{-\sqrt{\mu}t}\mathcal{E}(t),
\end{align*}
where the last inequality holds from the definition of $\mathcal{E}(t)$ in \eqref{continuous Lyapunov for strong disturb ODE}.
Thus, we have ${\cal E}(t) \le {\cal E}(0) e^{\sqrt{\mu}\Delta_2^2 Lt}$, which further implies that 
\begin{align*}
    f(X_t)-f(x^*)\leqslant e^{-\sqrt{\mu}t}\mathcal{E}(t)\leqslant e^{-\sqrt{\mu}t(1-\Delta_2^2L)} \mathcal{E}(0).
\end{align*}
This completes the proof of the theorem.
\end{proof}
\begin{remark}
In \cite{Attouch_2022_first-order}, the authors discussed a special case of the general mode \eqref{eq:general_perturbed_ODE} with $\Delta_1 = 0$, i.e.,
	\begin{equation}
		\label{first order ODE}\ddot{X_t}+2\sqrt{\mu}\dot{X_t}+\nabla f(X_t)+\Delta_2\nabla^2 f(X_t)\dot{X_t}=0.
	\end{equation}
As is shown in \cite[Theorem 7(i)]{Attouch_2022_first-order}, if $0\leqslant \Delta_2 \leqslant {1}/{(2\sqrt{\mu})}$, then it holds that  $f(X_t)-f(x^*)={\cal O}(e^{-\frac{1}{2}\sqrt{\mu}t})$, which is slower than ${\cal O}(e^{-\sqrt{\mu}t})$, the convergence rate resulted from the unperturbed ODE \eqref{low resolution for strong convex}. This observation aligns with \eqref{eq: convergence rate of delta1 = 0, delta2 > 0 in continuous case} indicating that adding only the perturbation term $\Delta_2\nabla^2 f(X_t)\dot{X_t}$ may slow the convergence rate of $f(X_t)-f(x^*)$. 

On the contrary,  Theorem \ref{theorem for general ODE} shows that involving only the perturbation term $\Delta_1 \nabla f(X_t)$, i.e., $\Delta_2 = 0$ and $\Delta_1 \geqslant 0$, in \eqref{eq:general_perturbed_ODE}, will not slow the convergence rate of $f(X_t)-f(x^*)$. However, as we will show later (see Section \ref{subsec: direct symplectic discretization}), this is not the case for the symplectic Euler discretization case.

\end{remark}

Next, we show that with a slightly strict assumption on $\Delta_1$ and $\Delta_2$, i.e., \eqref{faster continuous condition}, it is possible to obtain an even faster convergence rate.

\begin{THM}
\label{faster convergence rate for general ODE}
Suppose that $f\in{\cal C}^2$ is $\mu$-strongly convex.
If 
\begin{equation*}
    \label{faster continuous condition}
    0<\frac{\sqrt{\mu}}{2}\Delta_2< \Delta_1, 
\end{equation*}
then it holds that
\begin{equation*}  
 \frac{\mathrm{d}\mathcal{E}(t)}{\mathrm{d}t} \leqslant -c_1\mathcal{E}(t) \quad \mbox{ with } \quad c_1=\min \left\{ \frac{2\mu\Delta_2}{1+\Delta_1+3\mu\Delta_2^2}(\Delta_1-\frac{\sqrt{\mu}}{2}\Delta_2),\frac{\sqrt{\mu}}{3},\frac{\sqrt{\mu}}{3}\Delta_1  \right\}>0.
\end{equation*}
Thus, 
\begin{equation*}
        f(X_t)-f(x^*)\leqslant \frac{1}{1+\Delta_1}e^{-(\sqrt{\mu}+c_1)t}\mathcal{E}(0).
\end{equation*}
\end{THM}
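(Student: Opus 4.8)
The plan is to bootstrap from the differential inequality already derived inside the proof of Theorem~\ref{theorem for general ODE}, namely
\begin{equation*}
\frac{\mathrm{d}\mathcal{E}(t)}{\mathrm{d}t}\,e^{-\sqrt{\mu}t}\leqslant -\frac{\sqrt{\mu}}{2}\|\dot{X_t}\|^2-\frac{\mu\sqrt{\mu}}{2}\Delta_1\|X_t-x^*\|^2-\Delta_2\Big(\Delta_1-\frac{\sqrt{\mu}}{2}\Delta_2\Big)\|\nabla f(X_t)\|^2 .
\end{equation*}
Under the strict assumption $0<\tfrac{\sqrt{\mu}}{2}\Delta_2<\Delta_1$ every coefficient on the right-hand side is strictly negative (in particular $\Delta_1-\tfrac{\sqrt{\mu}}{2}\Delta_2>0$), so the task reduces to showing that this right-hand side is dominated by $-c_1\,\mathcal{E}(t)e^{-\sqrt{\mu}t}$; since $\mathcal{E}(t)\geqslant 0$, it suffices to find $c_1>0$ for which the three dissipation quadratics control, coefficient by coefficient, a suitable upper bound for $\mathcal{E}(t)e^{-\sqrt{\mu}t}$.

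To produce that upper bound, first I would split the squared norm appearing in $\mathcal{E}$ via the elementary estimate $\|a+b+c\|^2\leqslant 3(\|a\|^2+\|b\|^2+\|c\|^2)$ with $a=\dot{X_t}$, $b=\sqrt{\mu}(X_t-x^*)$, $c=\Delta_2\nabla f(X_t)$, which gives $\tfrac12\|\dot{X_t}+\sqrt{\mu}(X_t-x^*)+\Delta_2\nabla f(X_t)\|^2\leqslant \tfrac32\big(\|\dot{X_t}\|^2+\mu\|X_t-x^*\|^2+\Delta_2^2\|\nabla f(X_t)\|^2\big)$. Second, I would bound the function-value term using only $\mu$-strong convexity, via the standard inequality $f(X_t)-f(x^*)\leqslant \tfrac{1}{2\mu}\|\nabla f(X_t)\|^2$ (obtained by minimizing the strong-convexity lower bound anchored at $X_t$ over the free variable). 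Combining the two estimates yields
\begin{equation*}
\mathcal{E}(t)e^{-\sqrt{\mu}t}\leqslant \frac{3}{2}\|\dot{X_t}\|^2+\frac{3\mu}{2}\|X_t-x^*\|^2+\Big(\frac{3\Delta_2^2}{2}+\frac{1+\Delta_1}{2\mu}\Big)\|\nabla f(X_t)\|^2 .
\end{equation*}

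The coefficient comparison against the dissipation inequality is then immediate: the $\|\dot{X_t}\|^2$ terms force $c_1\leqslant \sqrt{\mu}/3$; the $\|X_t-x^*\|^2$ terms force $c_1\leqslant \sqrt{\mu}\Delta_1/3$; and the $\|\nabla f(X_t)\|^2$ terms force $c_1\big(\tfrac{3\Delta_2^2}{2}+\tfrac{1+\Delta_1}{2\mu}\big)\leqslant \Delta_2(\Delta_1-\tfrac{\sqrt{\mu}}{2}\Delta_2)$, i.e.\ $c_1\leqslant \tfrac{2\mu\Delta_2}{1+\Delta_1+3\mu\Delta_2^2}\big(\Delta_1-\tfrac{\sqrt{\mu}}{2}\Delta_2\big)$. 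Choosing $c_1$ to be the minimum of these three quantities --- each strictly positive under $0<\tfrac{\sqrt{\mu}}{2}\Delta_2<\Delta_1$ --- gives $\mathrm{d}\mathcal{E}(t)/\mathrm{d}t\leqslant -c_1\mathcal{E}(t)$, and Gr\"onwall's inequality yields $\mathcal{E}(t)\leqslant e^{-c_1 t}\mathcal{E}(0)$; the stated bound on $f(X_t)-f(x^*)$ then follows since $f(X_t)-f(x^*)\leqslant \tfrac{1}{1+\Delta_1}e^{-\sqrt{\mu}t}\mathcal{E}(t)$ by the definition of $\mathcal{E}$ in \eqref{continuous Lyapunov for strong disturb ODE}. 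I do not expect a genuine obstacle: the only design decision is how to allocate the three dissipation budgets, and the allocation above --- spending the entire $\|X_t-x^*\|^2$ budget on the quadratic part of $\mathcal{E}$ and charging the function-value term entirely to the $\|\nabla f(X_t)\|^2$ budget through the sharp bound $f(X_t)-f(x^*)\leqslant\tfrac{1}{2\mu}\|\nabla f(X_t)\|^2$ --- is precisely what reproduces the constant $c_1$ in the statement; any looser handling of $f(X_t)-f(x^*)$ (for instance routing it through $\langle\nabla f(X_t),X_t-x^*\rangle$ and Young's inequality) would only shrink $c_1$.
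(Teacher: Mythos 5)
Your proof is correct and matches the paper's argument essentially line for line: the same three-term Cauchy--Schwarz bound $\|a+b+c\|^2\leqslant 3(\|a\|^2+\|b\|^2+\|c\|^2)$, the same strong-convexity bound $f(X_t)-f(x^*)\leqslant\tfrac{1}{2\mu}\|\nabla f(X_t)\|^2$, the same coefficient-by-coefficient comparison against the dissipation inequality \eqref{eq: proof process of theorem 1}, and the same Gr\"onwall step to conclude.
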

\begin{proof}
From \eqref{continuous Lyapunov for strong disturb ODE}, we see that
\begin{equation*}
    \begin{aligned}
        \mathcal{E}(t)e^{-\sqrt{\mu}t}
        =&(1+\Delta_1)\big(f(X_t)-f(x^*)\big)+\frac{1}{2}\|\dot{X_t}+\sqrt{\mu}(X_t-x^*)+\Delta_2\nabla f(X_t)\|^2\\
        \leqslant &(1+\Delta_1)\big(f(X_t)-f(x^*)\big)+\frac{3}{2}\|\dot{X_t}\|^2+\frac{3\mu}{2}\|X_t-x^*\|^2+\frac{3}{2}\Delta_2^2\|\nabla f(X_t)\|^2\\
        \leqslant &\frac{1+\Delta_1+3\mu \Delta_2^2}{2\mu} \|\nabla f(X_t)\|^2+\frac{3}{2}\|\dot{X_t}\|^2+\frac{3\mu}{2}\|X_t-x^*\|^2,
    \end{aligned}
\end{equation*}
{where the first inequality is due to the Cauchy-Schwarz inequality
\begin{equation*}
    \|\dot{X_t}+\sqrt{\mu}(X_t-x^*)+\Delta_2\nabla f(X_t)\|^2 \leqslant 3\big(\|\dot{X_t}\|^2+\mu\|X_t-x^*\|^2+\Delta_2^2\|\nabla f(X_t)\|^2\big),
\end{equation*}
and the second inequality follows from the $\mu$-strong convexity of $f$ and the optimality of $x^*$
\begin{equation*}
    f(X_t)-f(x^*)\leqslant \frac{1}{2\mu}\|\nabla f(X_t)\|^2.
\end{equation*}}
Then, from the definition of $c_1$, we further have
\begin{equation*}\label{eq:c1et}
c_1 {\cal E}(t)e^{-\sqrt{\mu}t} \le \Delta_2(\Delta_1-\frac{\sqrt{\mu}}{2}\Delta_2)\|\nabla f(X_t)\|^2+\frac{\sqrt{\mu}}{2}\|\dot{X_t}\|^2+\frac{\mu\sqrt{\mu}}{2}\Delta_1\|X_t-x^*\|^2.
\end{equation*}
This, together with \eqref{eq: proof process of theorem 1}, implies that
\[
\frac{d \mathcal{E}(t)}{d t}e^{-\sqrt{\mu}t} \leqslant -c_1\mathcal{E}(t)e^{-\sqrt{\mu}t}.
\]
Solving the above ODE inequality and recalling \eqref{continuous Lyapunov for strong disturb ODE}, we have
$$e^{c_1 t}\mathcal{E}(t)=e^{(c_1+\sqrt{\mu})t}\big((1+\Delta_1)(f(X_t)-f(x^*))+\frac{1}{2}\|\dot{X_t}+\sqrt{\mu}(X_t-x^*)+\Delta_2\nabla f(X_t)\|^2\big)\leqslant \mathcal{E}(0).$$
Thus,
\begin{equation*}
    f(X_t)-f(x^*)\leqslant \frac{1}{1+\Delta_1}e^{-(\sqrt{\mu}+c_1)t}\mathcal{E}(0).
\end{equation*}
This completes the proof of the theorem.
\end{proof}

\section{Optimization algorithms obtained by discretizing \texorpdfstring{\eqref{eq:general_perturbed_ODE}}{(general perturbed ODE)}}
\label{sec: discrete case}

The previous section shows that under proper choices of $\Delta_1$ and $\Delta_2$, the resulting trajectory of \eqref{eq:general_perturbed_ODE} enjoys a favorable convergence rate of function values.  In this section, we demonstrate that a proper time discretization of the perturbed dynamic \eqref{eq:general_perturbed_ODE}, combined with carefully chosen values of $\Delta_1$ and $\Delta_2$, yields first-order optimization algorithms with fast convergence properties.
For this purpose, we focus on the following phase-space form of the perturbed ODE \eqref{eq:general_perturbed_ODE}
\begin{equation}
	\label{general perturbation ODE phase representation}
	\left\{
	\begin{aligned}
		&\frac{\mathrm{d} X}{\mathrm{d} t}=\dot{X},\\
		&\frac{\mathrm{d} \dot{X}}{\mathrm{d} t}=-2\sqrt{\mu}\dot{X}-(1+\Delta_1)\nabla f(X)-\Delta_2\nabla^2 f(X)\dot{X}.
	\end{aligned}
	\right.
\end{equation}
The above reformulation is closely related to the phase-space representation technique proposed in \cite{Shi_2022_understanding}, which has yielded interesting results \cite{Chen_2022_gradient, Li_2022_proximal,  Li_2024_linear, Shi_2019_acceleration} in accelerated algorithms.
In this section, we study optimization algorithms by taking the popular implicit Euler and symplectic Euler discretizations on \eqref{general perturbation ODE phase representation}.
The implicit and symplectic Euler schemes are well-known discretizations for solving ODEs, and have recently been highlighted in the study of accelerated optimization algorithms. See, for example, \cite{Betancourt_2018_symplectic, Jordan_2018_dynamical, Shi_2019_acceleration, França_2021_dissipative}. In particular, the discrete algorithms obtained by the implicit and symplectic discretizations of the phase-space form of the unperturbed ODE \eqref{low resolution for strong convex}, i.e. $\Delta_1 = \Delta_2 = 0$ in \eqref{general perturbation ODE phase representation}, have been investigated in \cite{Shi_2019_acceleration}. 

Here, for \eqref{general perturbation ODE phase representation}, we utilize Lyapunov functions translated from the continuous case via the phase-space representation to show that appropriate perturbations do not slow down and can even accelerate the convergence of function values. Furthermore, our analysis leads to new accelerated methods that extend the acceleration techniques proposed in \cite{Shi_2019_acceleration}.

\subsection{Optimization algorithms obtained by the implicit discretization}
\label{subsec: implicit discretization}
We start by discretizing \eqref{general perturbation ODE phase representation} 
using the following implicit Euler scheme:
\begin{equation}
    \label{strong disturb implicit iteration}
    \left\{
    \begin{aligned}
        &\frac{x_{k+1}-x_k}{\sqrt{s}}=v_{k+1},\\
        &\frac{v_{k+1}-v_k}{\sqrt{s}}=-2\sqrt{\mu}v_{k+1}-(1+\Delta_1)\nabla f(x_{k+1})-\Delta_2\frac{\nabla f(x_{k+1})-\nabla f(x_k)}{\sqrt{s}}.
    \end{aligned}
    \right.
\end{equation} 
Associated with  \eqref{strong disturb implicit iteration}, similar to the continuous case in \eqref{continuous Lyapunov for strong disturb ODE}, we define the following Lyapunov function:
\begin{equation}
    \label{Lyapunov for strong disturb implicit iteration}
    E(k)=(1+\sqrt{\mu s})^k\big((1+\Delta_1)(f(x_k)-f(x^*))+\frac{1}{2}\|v_{k}+\sqrt{\mu}(x_k-x^*)+\Delta_2\nabla f(x_k)\|^2\big).
\end{equation}
With this potential function, we derive the convergence rate of $f(x_k) - f^*$ in the following theorem. 
\begin{THM}
\label{general implicit corollary}
Suppose that $f\in {\mathcal C}^1$ is  $\mu$-strongly convex.
If the non-negative perturbation parameters $\Delta_1, \Delta_2$ satisfy
\begin{equation}
    \label{condition for implicit}
    0\leqslant \frac{\sqrt{\mu}}{2}\Delta_2\leqslant \Delta_1,
\end{equation}
then for any step size $s>0$ and any initial point $x_0$ and $v_0$,  
it holds that
\begin{equation}\label{eq:rate_con_simp}
    f(x_k)-f(x^*)\leqslant \frac{1}{1+\Delta_1}(1+\sqrt{\mu s})^{-k}E(0), \quad \forall\, k \geqslant 0.
\end{equation}
\end{THM}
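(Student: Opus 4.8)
The plan is to mirror the continuous-time Lyapunov argument of Theorem~\ref{theorem for general ODE}, but working with the discrete potential $E(k)$ in \eqref{Lyapunov for strong disturb implicit iteration}. The key quantity to control is the one-step difference $E(k+1)-E(k)$, and the goal is to show it is nonpositive under \eqref{condition for implicit}. Once $E(k+1)\leqslant E(k)$ for all $k$, telescoping gives $E(k)\leqslant E(0)$, and since $f$ is $\mu$-strongly convex and $x^*$ is optimal, the term $(1+\Delta_1)(f(x_k)-f(x^*))$ is bounded above by $E(k)/(1+\sqrt{\mu s})^k$, which immediately yields \eqref{eq:rate_con_simp}.

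To estimate $E(k+1)-E(k)$, I would first factor out $(1+\sqrt{\mu s})^k$ and write $E(k+1)-E(k) = (1+\sqrt{\mu s})^k\big[(1+\sqrt{\mu s})\,\Phi_{k+1}-\Phi_k\big]$, where $\Phi_k := (1+\Delta_1)(f(x_k)-f(x^*))+\tfrac12\|v_k+\sqrt{\mu}(x_k-x^*)+\Delta_2\nabla f(x_k)\|^2$. I would expand $(1+\sqrt{\mu s})\Phi_{k+1}-\Phi_k$ by splitting the function-value part and the squared-norm part. For the squared-norm part, use the identity $\|a\|^2-\|b\|^2 = \|a-b\|^2 + 2\langle b, a-b\rangle$ with $a = v_{k+1}+\sqrt{\mu}(x_{k+1}-x^*)+\Delta_2\nabla f(x_{k+1})$ and $b$ the analogous quantity at index $k$; then substitute the iteration relations \eqref{strong disturb impicit iteration} to rewrite $a-b$ in terms of $v_{k+1}$, $\nabla f(x_{k+1})$, $\nabla f(x_k)$ (the implicit scheme is chosen precisely so that $a-b$ simplifies nicely, with the $\Delta_2$ gradient-correction term telescoping). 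For the function-value difference $f(x_{k+1})-f(x_k)$, apply $\mu$-strong convexity in the form $f(x_k)\geqslant f(x_{k+1})+\langle\nabla f(x_{k+1}), x_k-x_{k+1}\rangle+\tfrac{\mu}{2}\|x_k-x_{k+1}\|^2$, and use $x_k-x_{k+1} = -\sqrt{s}\,v_{k+1}$. Collecting everything, I expect the cross terms involving $\langle\nabla f(x_{k+1}), x_{k+1}-x^*\rangle$ to combine so that strong convexity, in the two forms used in the proof of Theorem~\ref{theorem for general ODE} (namely $f(x_{k+1})-f(x^*)\geqslant \langle\nabla f(x_{k+1}),x_{k+1}-x^*\rangle - \tfrac{\mu}{2}\|x_{k+1}-x^*\|^2$ wait — the correct direction — and $\mu\langle\nabla f(x_{k+1}),x_{k+1}-x^*\rangle\leqslant\|\nabla f(x_{k+1})\|^2$), can be invoked to absorb the positive contributions.

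The outcome I am aiming for is a bound of the shape
\begin{equation*}
(1+\sqrt{\mu s})\Phi_{k+1}-\Phi_k \leqslant -\alpha_1\|v_{k+1}\|^2 - \alpha_2\Delta_1\|x_{k+1}-x^*\|^2 - \Delta_2\Big(\Delta_1-\tfrac{\sqrt{\mu}}{2}\Delta_2\Big)\|\nabla f(x_{k+1})\|^2 + (\text{nonpositive } O(s) \text{ terms}),
\end{equation*}
with $\alpha_1,\alpha_2\geqslant 0$, in direct analogy with \eqref{eq: proof process of theorem 1}. Under \eqref{condition for implicit} the coefficient $\Delta_1-\tfrac{\sqrt{\mu}}{2}\Delta_2$ is nonnegative, so the whole right-hand side is $\leqslant 0$, giving $E(k+1)\leqslant E(k)$. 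The main obstacle I anticipate is bookkeeping: unlike the symplectic scheme, the implicit scheme evaluates $\nabla f$ at $x_{k+1}$ in both equations, which should make the algebra cleaner, but one must carefully verify that all the extra $\sqrt{s}$- and $s$-order terms generated by the backward differences carry the correct (nonpositive) sign for \emph{every} step size $s>0$ rather than only for small $s$ — this is what makes the implicit discretization special and is the crux that distinguishes this theorem (valid for all $s>0$) from what one could prove for explicit schemes. A secondary subtlety is ensuring the $\Delta_2$-dependent cross terms coming from $\Delta_2\nabla f(x_k)$ inside the norm interact correctly with the $\Delta_2$ gradient-correction term in the velocity update; I expect these to be engineered to cancel, which is the reason $E(k)$ is defined with exactly that $\Delta_2\nabla f(x_k)$ shift.
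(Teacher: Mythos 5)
Your outline follows the paper's proof essentially step for step: expand $(1+\sqrt{\mu s})^{-k}(E(k+1)-E(k))$, use the implicit update \eqref{strong disturb impicit iteration} to rewrite the change in $v_k+\sqrt{\mu}(x_k-x^*)+\Delta_2\nabla f(x_k)$ as $-\sqrt{\mu s}\,v_{k+1}-(1+\Delta_1)\sqrt s\,\nabla f(x_{k+1})$, and then invoke the same three strong-convexity/optimality inequalities to force the difference $\leqslant 0$ under \eqref{condition for implicit}. The only cosmetic difference is your choice of polarization identity $\|a\|^2-\|b\|^2=\|a-b\|^2+2\langle b,a-b\rangle$; the paper uses the equivalent $\langle a,a-b\rangle-\tfrac12\|a-b\|^2$, which is slightly cleaner for the implicit scheme since it keeps every gradient and iterate at index $k+1$, but either form reaches the same conclusion.
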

\begin{proof}
{
Recalling the definition of $E(k)$ in \eqref{Lyapunov for strong disturb implicit iteration}, we see that
\begin{equation}\label{eq:diffEk}
    \begin{aligned}
    &(1+\sqrt{\mu s})^{-k}\big(E(k+1)-E(k) \big)\\
    =&(1+\Delta_1)\big(f(x_{k+1})-f(x_k)\big)+\sqrt{\mu s}(1+\Delta_1)\big(f(x_{k+1})-f(x^*)\big) + M_1^k + M_2^k,
\end{aligned}
\end{equation}
where 
\[M_1^k: = \frac{1}{2}\|v_{k+1}+\sqrt{\mu}(x_{k+1}-x^*)+\Delta_2\nabla f(x_{k+1})\|^2-\frac{1}{2}\|v_k+\sqrt{\mu}(x_k-x^*)+\Delta_2\nabla f(x_k)\|^2\]
and \[M_2^k := \frac{\sqrt{\mu s}}{2}\|v_{k+1}+\sqrt{\mu}(x_{k+1}-x^*)+\Delta_2\nabla f(x_{k+1})\|^2.\]
Based on the iterative scheme \eqref{strong disturb implicit iteration}, we have that
\begin{equation*}
    v_{k+1}-v_k+\sqrt{\mu}(x_{k+1}-x_k)+\Delta_2(\nabla f(x_{k+1})-\nabla f(x_k))=-\sqrt{\mu s}v_{k+1}-(1+\Delta_1)\sqrt{s}\nabla f(x_{k+1}),
\end{equation*}
which further implies that
\begin{align*}
    M_1^k
    =&\langle v_{k+1}+\sqrt{\mu}(x_{k+1}-x^*)+\Delta_2 \nabla f(x_{k+1}), -\sqrt{\mu s}v_{k+1}-(1+\Delta_1)\sqrt{s}\nabla f(x_{k+1}) \rangle\\
   & -\frac{1}{2}\|\sqrt{\mu s}v_{k+1}+(1+\Delta_1)\sqrt{s}\nabla f(x_{k+1}) \|^2.
\end{align*}
Then, we have by simple calculations that
\begin{equation}
    \label{eq:m1pm2}
    \begin{aligned}
    M_1^k + M_2^k = & -\frac{\sqrt{\mu s}}{2}\|v_{k+1}\|^2
            +\frac{\mu}{2}\sqrt{\mu s}\|x_{k+1}-x^*\|^2+\Delta_2\sqrt{s}\big(\frac{\sqrt{\mu}}{2}\Delta_2 -(1+\Delta_1)\big)\|\nabla f(x_{k+1})\|^2\\
            &-(1+\Delta_1)\langle \nabla f(x_{k+1}),x_{k+1}-x_{k}\rangle +\sqrt{\mu s}\big(\sqrt{\mu}\Delta_2-(1+\Delta_1)\big)\langle x_{k+1}-x^*, \nabla f(x_{k+1})\rangle \\
            & -\frac{1}{2}\|\sqrt{\mu s}v_{k+1}+(1+\Delta_1)\sqrt{s}\nabla f(x_{k+1})\|^2.
    \end{aligned}
\end{equation}
Now, from \eqref{eq:diffEk} and \eqref{eq:m1pm2}, we have that 
\begin{align*}
    &(1+\sqrt{\mu s})^{-k}\big(E(k+1)-E(k)\big)\\
    =&(1+\Delta_1)\big(f(x_{k+1})-f(x_k)\big)+\sqrt{\mu s}(1+\Delta_1)\big(f(x_{k+1})-f(x^*)\big)-\frac{\sqrt{\mu s}}{2}\|v_{k+1}\|^2\\
            &+\frac{\mu}{2}\sqrt{\mu s}\|x_{k+1}-x^*\|^2+\Delta_2\sqrt{s}\big(\frac{\sqrt{\mu}}{2}\Delta_2-(1+\Delta_1)\big)\|\nabla f(x_{k+1})\|^2-(1+\Delta_1)\langle \nabla f(x_{k+1}),x_{k+1}-x_{k}\rangle\\
            &+\sqrt{\mu s}\big(\sqrt{\mu}\Delta_2-(1+\Delta_1)\big)\langle x_{k+1}-x^*, \nabla f(x_{k+1})\rangle -\frac{1}{2}\|\sqrt{\mu s}v_{k+1}+(1+\Delta_1)\sqrt{s}\nabla f(x_{k+1})\|^2 \\
    =&(1+\Delta_1)\big(f(x_{k+1})-f(x_k)+\langle \nabla f(x_{k+1}),x_k-x_{k+1} \rangle \big) + \mu\sqrt{s}\Delta_2\langle x_{k+1}-x^*,\nabla f(x_{k+1})\rangle  \\
    & +\sqrt{\mu s}(1+\Delta_1)\big(f(x_{k+1})-f(x^*)+\langle \nabla f(x_{k+1}), x^*-x_{k+1}\rangle \big) +\frac{\mu}{2}\sqrt{\mu s}\|x_{k+1}-x^*\|^2 \\
    & +\Delta_2\sqrt{s}\big(\frac{\sqrt{\mu}}{2}\Delta_2-(1+\Delta_1)\big)\|\nabla f(x_{k+1})\|^2 - \frac{\sqrt{\mu s}}{2}\|v_{k+1}\|^2-\frac{1}{2}\|\sqrt{\mu s}v_{k+1}+(1+\Delta_1)\sqrt{s}\nabla f(x_{k+1})\|^2.
\end{align*}
}
{Since $f$ is $\mu$-strongly convex and $x^*$ is the optimal solution, it holds that}
    \begin{equation*}
        \left\{
        \begin{aligned}
            & f(x_{k+1})-f(x^*)+\langle x^*-x_{k+1},\nabla f(x_{k+1})\rangle+\frac{\mu}{2}\|x_{k+1}-x^*\|^2\leqslant 0,\\
            & f(x_{k+1})-f(x_k)+\langle x_k-x_{k+1},\nabla f(x_{k+1})\rangle +\frac{\mu}{2}\|x_{k+1}-x_k\|^2\leqslant 0,\\
            & \mu \langle x_{k+1}-x^*, \nabla f(x_{k+1}) \rangle \leqslant \|\nabla f(x_{k+1})\|^2.
        \end{aligned}
        \right.
    \end{equation*}
   {Therefore,  we have
    \begin{equation*}
        \begin{aligned}
            &(1+\sqrt{\mu s})^{-k}\big(E(k+1)-E(k)\big)\\
            \leqslant{} & -\frac{\mu}{2}(1+\Delta_1)\|x_{k+1}-x_k\|^2-\frac{\mu\sqrt{\mu s}}{2}\Delta_1\|x_{k+1}-x^*\|^2+\Delta_2\sqrt{s}\big(\frac{\sqrt{\mu}}{2}\Delta_2-\Delta_1\big)\|\nabla f(x_{k+1})\|^2\\
            &-\frac{\sqrt{\mu s}}{2}\|v_{k+1}\|^2-\frac{1}{2}\|\sqrt{\mu s}v_{k+1}+(1+\Delta_1)\sqrt{s}\nabla f(x_{k+1})\|^2\\
            \leqslant {}&0,
        \end{aligned}
    \end{equation*}
    where the last inequality holds under the condition \eqref{condition for implicit}.
    }
    Hence, 
    \begin{equation*}
        E(k+1) \leqslant E(k),\quad \forall\, k \geqslant 0.
    \end{equation*}
    Thus, using \eqref{Lyapunov for strong disturb implicit iteration} and the above inequality, we obtain
    \begin{equation*}
        f(x_k)-f(x^*)\leqslant \frac{1}{1+\Delta_1}(1+\sqrt{\mu s})^{-k}E(k)\leqslant \frac{1}{1+\Delta_1}(1+\sqrt{\mu s})^{-k}E(0).
    \end{equation*}
    This completes the proof of the theorem.
\end{proof}

    The condition \eqref{condition for implicit} is identical to \eqref{continuous condition} in Theorem \ref{theorem for general ODE}. Set $k\sqrt{s}\equiv t$, and let $s\to 0+$, then the discrete Lyapunov function  $E(k)$ converges to $ \mathcal{E}(t)$ defined in  \eqref{continuous Lyapunov for strong disturb ODE}, and the rate $(1+\sqrt{\mu s})^{-k}$ converges to $ e^{-\sqrt{\mu}t}$. Therefore, Theorem \ref{general implicit corollary} can be considered a discrete counterpart of Theorem \ref{theorem for general ODE}. This implies that the implicit Euler discretization \eqref{strong disturb implicit iteration} effectively preserves the convergence properties of the trajectory of the continuous system \eqref{eq:general_perturbed_ODE}.

In \cite[Theorem 3.2(c)]{Shi_2019_acceleration}, the algorithm obtained by the implicit discretization of the phase-space ODE \eqref{general perturbation ODE phase representation} for the low-resolution ODE \eqref{low resolution for strong convex}, i.e. $\Delta_1=0$, $\Delta_2=0$, has been shown to possess a convergence rate 
\[f(x_k)-f(x^*)={\cal O}\big((1+\frac{1}{4}\sqrt{\mu s})^{-k}\big)\]
 for a $\mu$-strongly convex, $L$-smooth function $f$ if the step size $s$ satisfies $0<s\leqslant 1/L$. 
Here, under a weaker assumption, i.e., only assuming $f$ to be $\mu$-strongly convex, Theorem \ref{general implicit corollary} obtains a stronger and broader result, i.e., when  $0\leqslant \Delta_2 \sqrt{\mu}/2 \leqslant \Delta_1$, 
\[f(x_k)-f(x^*)={\cal O}\big((1+\sqrt{\mu s})^{-k}\big)\] for any step size $s>0$.
Thus, similar to the continuous case, Theorem \ref{general implicit corollary} shows that proper perturbation will not slow down the convergence rate of $f(x_k) - f(x^*)$ compared to the unperturbed case. In particular, involving only the perturbation term $\Delta_1 \nabla f(x_{k+1})$ in \eqref{strong disturb implicit iteration}, i.e., $\Delta_2 =0$ and $\Delta_1 \ge 0$, will not slow down the convergence rate.

We shall also mention that by simple calculations,
the iterative scheme \eqref{strong disturb implicit iteration} can be rewritten into the following form 
\begin{equation}
    \label{alg: implicit discretization}
    \left\{
    \begin{aligned}
        & y_k =x_k+\frac{\Delta_2\sqrt{s}}{1+2\sqrt{\mu s}}\nabla f(x_k)-\frac{1}{1+2\sqrt{\mu s}}(x_k-x_{k-1}),\\
     & x_{k+1} =\mathrm{prox}_{\beta f}(y_k) \,  \mbox{ with } \,  \beta=\frac{\sqrt{s}}{1+2\sqrt{\mu s}}[(1+\Delta_1)\sqrt{s}+\Delta_2].
    \end{aligned}
    \right.
\end{equation}
 Note that, in \eqref{alg: implicit discretization}, the proximal mapping associated with $f$ is included and thus poses computational difficulties in practical applications. 

\subsection{Optimization algorithms obtained by symplectic discretizations}
\label{subsec: direct symplectic discretization}
In this subsection, we first discretize the phase space ODE \eqref{general perturbation ODE phase representation} using the symplectic Euler scheme and arrive at the following updating formula:
\begin{equation}
    \label{strong disturb symplectic iteration}
    \left\{
    \begin{aligned}
        &\frac{x_{k+1}-x_k}{\sqrt{s}}=v_{k},\\
        &\frac{v_{k+1}-v_k}{\sqrt{s}}=-2\sqrt{\mu}v_{k+1}-(1+\Delta_1)\nabla f(x_{k+1})-\Delta_2\frac{\nabla f(x_{k+1})-\nabla f(x_k)}{\sqrt{s}},
    \end{aligned}
    \right.
\end{equation}
which can be further equivalently rewritten as:
\begin{equation}
    \label{eq: iteration of direct symplectic discretization}
    x_{k+1}=x_k+\frac{1}{1+2\sqrt{\mu s}}(x_k-x_{k-1})-\frac{1+\Delta_1}{1+2\sqrt{\mu s}}s\nabla f(x_k)-\frac{\Delta_2\sqrt{s}}{1+2\sqrt{\mu s}}\big(\nabla f(x_k)-\nabla f(x_{k-1})\big).
\end{equation}
The following Lyapunov function is used to analyze the convergence rate of \eqref{eq: iteration of direct symplectic discretization}:
\begin{equation}
    \label{Lyapunov for symplectic 2}
    \begin{aligned}
        E(k)=&\Big(1+\frac{\sqrt{\mu s}}{1+\sqrt{\mu s}}\Big)^k \big((1+\Delta_1)\big(f(x_k)-f(x^*)-\frac{\Delta_2\sqrt{s}}{2}\|\nabla f(x_k)\|^2\big)\\
        &+\frac{1}{2}\|v_k+\sqrt{\mu}(x_{k+1}-x^*)+\Delta_2\nabla f(x_k)\|^2\big).
    \end{aligned}
\end{equation}
Unlike the one used in \eqref{Lyapunov for strong disturb implicit iteration}, the Lyapunov function $E$ in \eqref{Lyapunov for symplectic 2} contains an extra term $-\Delta_2\sqrt{s}\|\nabla f(x_k)\|^2/2$ in the first part associated with the difference of function value $f(x_k) - f(x^*)$.
\begin{THM}
    \label{th:symplectic 2}
    Suppose that $f$ is $\mu$-strongly convex and $L$-smooth.  
    If the non-negative perturbation parameters $\Delta_1, \Delta_2$, and step size $s>0$ satisfy:
\begin{equation}
    \label{conditions of direct symplectic 2}
    \begin{aligned}
        (1)& \, \Delta_2\sqrt{s}\leqslant \frac{1}{L};\\
        (2)& \, \Delta_2\leqslant \sqrt{s}(1+\Delta_1);\\
        (3)& \, \frac{\sqrt{\mu s}}{1+\sqrt{\mu s}}\Delta_2^2-\Delta_2\sqrt{s}(1+\Delta_1)\Big(\frac{\sqrt{\mu s}}{1+\sqrt{\mu s}}+2\Big)+(1+\Delta_1)^2s
        -\frac{\sqrt{\mu s}}{1+\sqrt{\mu s}}\frac{\Delta_1}{L}\\
        &+\frac{2\mu\sqrt{s}}{(1+\sqrt{\mu s})L}\big(\Delta_2-\sqrt{s}(1+\Delta_1)\big)\leqslant 0,
    \end{aligned}
\end{equation}
then for any initial point $x_0$ and $v_0$, the sequence $\{x^k\}$ generated by \eqref{eq: iteration of direct symplectic discretization} satisfies that 
\begin{equation*}
    f(x_k)-f(x^*)-\frac{\Delta_2\sqrt{s}}{2}\|\nabla f(x_k)\|^2 \leqslant \frac{1}{1+\Delta_1}\Big(1+\frac{\sqrt{\mu s}}{1+\sqrt{\mu s}}\Big)^{-k}E(0).
\end{equation*}
Thus, if in addition $\Delta_2\sqrt{s}< 1/L$, then
\begin{equation}\label{eq:f_rate_symplectic}
    f(x_k)-f(x^*)\leqslant \frac{1}{(1-L\Delta_2\sqrt{s})(1+\Delta_1)}\Big(1+\frac{\sqrt{\mu s}}{1+\sqrt{\mu s}}\Big)^{-k}E(0).
\end{equation}
\end{THM}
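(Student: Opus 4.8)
The plan is to mirror the Lyapunov-difference argument from the proof of Theorem~\ref{general implicit corollary}, while carefully tracking the one-step lag of the symplectic scheme (the position update $x_{k+1}=x_k+\sqrt s\,v_k$ uses $v_k$, which is precisely why the velocity term in \eqref{Lyapunov for symplectic 2} is anchored at $x_{k+1}$ rather than $x_k$). Abbreviate $\theta:=\tfrac{\sqrt{\mu s}}{1+\sqrt{\mu s}}$, so the prefactor in \eqref{Lyapunov for symplectic 2} is $(1+\theta)^k$, and write $w_k:=v_k+\sqrt\mu(x_{k+1}-x^*)+\Delta_2\nabla f(x_k)$ for the vector inside the squared norm and $B_k$ for the bracket. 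Then $(1+\theta)^{-k}\big(E(k+1)-E(k)\big)=\big(B_{k+1}-B_k\big)+\theta B_{k+1}$, and expanding, $B_{k+1}-B_k=(1+\Delta_1)\big(f(x_{k+1})-f(x_k)\big)+\tfrac{(1+\Delta_1)\Delta_2\sqrt s}{2}\big(\|\nabla f(x_k)\|^2-\|\nabla f(x_{k+1})\|^2\big)+\tfrac12\|w_{k+1}\|^2-\tfrac12\|w_k\|^2$.

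Next I would derive the key algebraic identity from \eqref{strong disturb symplectic iteration}: substituting $x_{k+2}-x_{k+1}=\sqrt s\,v_{k+1}$ and the velocity update into $w_{k+1}-w_k$ yields the clean cancellation
$$w_{k+1}-w_k=-\sqrt{\mu s}\,v_{k+1}-(1+\Delta_1)\sqrt s\,\nabla f(x_{k+1}),\qquad w_{k+1}=(1+\sqrt{\mu s})v_{k+1}+\sqrt\mu(x_{k+1}-x^*)+\Delta_2\nabla f(x_{k+1}).$$
Then $\tfrac12\|w_{k+1}\|^2-\tfrac12\|w_k\|^2=\langle w_{k+1},w_{k+1}-w_k\rangle-\tfrac12\|w_{k+1}-w_k\|^2$, and after substituting both expressions and expanding $\theta B_{k+1}$ one is left with a quadratic form in $v_{k+1}$, $x_{k+1}-x^*$, $\nabla f(x_{k+1})$, $\nabla f(x_k)$ together with the function-value terms. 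A pleasant feature is that, since $\theta(1+\sqrt{\mu s})=\sqrt{\mu s}$, the cross terms involving $\langle v_{k+1},x_{k+1}-x^*\rangle$ cancel exactly, and the surviving $\|v_{k+1}\|^2$-coefficient (reinforced by $-\tfrac12\|\sqrt{\mu s}v_{k+1}+(1+\Delta_1)\sqrt s\nabla f(x_{k+1})\|^2$) is negative, so these terms may be discarded once Cauchy--Schwarz is used to neutralise the remaining $\langle v_{k+1},\nabla f(x_{k+1})\rangle$ cross term against them.

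It then remains to dispatch the function-value terms and the surplus gradient norms. For $f(x_{k+1})-f(x^*)$ and $\|x_{k+1}-x^*\|^2$, use $\mu$-strong convexity at $x^*$: $f(x_{k+1})-f(x^*)+\langle\nabla f(x_{k+1}),x^*-x_{k+1}\rangle+\tfrac\mu2\|x_{k+1}-x^*\|^2\le 0$. The troublesome extra term $+\tfrac{(1+\Delta_1)\Delta_2\sqrt s}{2}\|\nabla f(x_k)\|^2$, which is the price of the modified Lyapunov function, is absorbed by invoking an $L$-smoothness (co-coercivity) inequality such as $f(x_{k+1})-f(x_k)\le\langle\nabla f(x_{k+1}),x_{k+1}-x_k\rangle-\tfrac1{2L}\|\nabla f(x_{k+1})-\nabla f(x_k)\|^2$: its $-\tfrac1{2L}\|\nabla f(x_k)\|^2$ part dominates the surplus precisely when $\Delta_2\sqrt s\le1/L$, i.e. condition~(1). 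Condition~(2), $\Delta_2\le\sqrt s(1+\Delta_1)$, fixes the sign of the coefficient multiplying $\langle\nabla f(x_{k+1}),x_{k+1}-x^*\rangle$, so that the estimates $\langle\nabla f(x_{k+1}),x_{k+1}-x^*\rangle\ge\tfrac1L\|\nabla f(x_{k+1})\|^2$ and $\mu\langle\nabla f(x_{k+1}),x_{k+1}-x^*\rangle\le\|\nabla f(x_{k+1})\|^2$ may be applied in the correct directions. After all substitutions everything collapses to a scalar multiple of $\|\nabla f(x_{k+1})\|^2$ whose coefficient is exactly the left-hand side of~(3); hence $E(k+1)\le E(k)$ for all $k$, and the first displayed conclusion follows from \eqref{Lyapunov for symplectic 2}. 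The last claim \eqref{eq:f_rate_symplectic} is then immediate: when $\Delta_2\sqrt s<1/L$, the standard inequality $\|\nabla f(x_k)\|^2\le2L\big(f(x_k)-f(x^*)\big)$ gives $(1-L\Delta_2\sqrt s)\big(f(x_k)-f(x^*)\big)\le f(x_k)-f(x^*)-\tfrac{\Delta_2\sqrt s}{2}\|\nabla f(x_k)\|^2$.

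\textbf{Main obstacle.} The bookkeeping in the collecting step is where the real work lies: one must carry along roughly half a dozen $s$-dependent coefficients, decide at each spot whether to spend $L$-smoothness (condition~(1)) or strong convexity, handle the $v_{k+1}$--$\nabla f$ cross term without wasting the available negativity, and verify that the residue assembles \emph{precisely} into the quadratic expression in condition~(3) rather than into something merely comparable. Matching the constants in (3) — in particular the terms carrying $1/L$ and the factor $\tfrac{2\mu\sqrt s}{(1+\sqrt{\mu s})L}=\tfrac{2\sqrt\mu\,\theta}{L}$, which should emerge from the interplay of the $\theta\|w_{k+1}\|^2$ expansion with the co-coercivity bounds — is the delicate part.
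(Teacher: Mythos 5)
Your proposal follows essentially the same route as the paper: same Lyapunov function, same decomposition into the difference $B_{k+1}-B_k$ plus the $\theta B_{k+1}$ correction (what the paper calls $M_1^k$ and $M_2^k$), the same algebraic identity $w_{k+1}-w_k=-\sqrt{\mu s}\,v_{k+1}-(1+\Delta_1)\sqrt s\,\nabla f(x_{k+1})$ obtained by inserting $x_{k+2}-x_{k+1}=\sqrt s\,v_{k+1}$, the same cancellation of $\langle v_{k+1},x_{k+1}-x^*\rangle$ via $\theta(1+\sqrt{\mu s})=\sqrt{\mu s}$, and the same use of $\mu$-strong convexity and $L$-smoothness to produce conditions (1)--(3). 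The final rate is obtained identically from $\|\nabla f(x_k)\|^2\le 2L(f(x_k)-f(x^*))$.

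One bookkeeping slip is worth flagging before you carry out the full expansion. You claim that the extra $+\tfrac{(1+\Delta_1)\Delta_2\sqrt s}{2}\|\nabla f(x_k)\|^2$ contributed by the modified Lyapunov function is neutralised by a $-\tfrac1{2L}\|\nabla f(x_k)\|^2$ term from co-coercivity; but the inequality you quote produces $-\tfrac1{2L}\|\nabla f(x_{k+1})-\nabla f(x_k)\|^2$, not $-\tfrac1{2L}\|\nabla f(x_k)\|^2$. What actually happens is that when you expand $\langle w_{k+1},w_{k+1}-w_k\rangle$ and apply the polarisation identity $\langle\nabla f(x_{k+1})-\nabla f(x_k),\nabla f(x_{k+1})\rangle=\tfrac12\big(\|\nabla f(x_{k+1})\|^2-\|\nabla f(x_k)\|^2+\|\nabla f(x_{k+1})-\nabla f(x_k)\|^2\big)$, the $\|\nabla f(x_k)\|^2$ and $\|\nabla f(x_{k+1})\|^2$ terms coming from the Lyapunov bracket cancel exactly against those emerging from the polarisation, leaving only $+\tfrac{(1+\Delta_1)\Delta_2\sqrt s}{2}\|\nabla f(x_{k+1})-\nabla f(x_k)\|^2$; it is this residue that condition (1) controls through $(\Delta_2\sqrt s-\tfrac1L)\tfrac{1+\Delta_1}{2}\|\nabla f(x_{k+1})-\nabla f(x_k)\|^2\le 0$. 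The rest of your sketch — condition (2) fixing the sign in front of $\langle\nabla f(x_{k+1}),x_{k+1}-x^*\rangle$ so that $\langle\nabla f(x_{k+1}),x_{k+1}-x^*\rangle\ge\tfrac1L\|\nabla f(x_{k+1})\|^2$ is applied in the useful direction, and the residue collapsing to the left-hand side of (3) — matches the paper.
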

\begin{proof}
The proof for the current theorem is quite similar to the one for Theorem  \ref{general implicit corollary}. Particularly, we will argue that $E(k)$, defined in  \eqref{Lyapunov for symplectic 2}, is nonincreasing across the iteration $k$. For this purpose, we compute
    \begin{equation}
        \label{eq: diffEk in direct symplectic discretization}
        \begin{aligned}
            &\Big(1+\frac{\sqrt{\mu s}}{1+\sqrt{\mu s}}\Big)^{-k} \big( E(k+1)-E(k)\big)\\
            =&(1+\Delta_1)\big(f(x_{k+1})-f(x_k)\big)+\frac{\sqrt{\mu s}}{1+\sqrt{\mu s}}(1+\Delta_1)\big(f(x_{k+1})-f(x^*)\big)+M_1^k+M_2^k\\
            &-\frac{\Delta_2\sqrt{s}}{2}(1+\Delta_1)(\|\nabla f(x_{k+1})\|^2-\|\nabla f(x_k)\|^2)-\frac{\Delta_2\sqrt{\mu}s}{2(1+\sqrt{\mu s})}(1+\Delta_1)\|\nabla f(x_{k+1})\|^2,
        \end{aligned}
    \end{equation}
    where
    \begin{equation*}
        M_1^k := \frac{1}{2}\|v_{k+1}+\sqrt{\mu}(x_{k+2}-x^*)+\Delta_2\nabla f(x_{k+1})\|^2-\frac{1}{2}\|v_k+\sqrt{\mu}(x_{k+1}-x^*)+\Delta_2\nabla f(x_k)\|^2,
    \end{equation*}
    and
    \begin{equation*}
        M_2^k := \frac{\sqrt{\mu s}}{2(1+\sqrt{\mu s})}\|v_{k+1}+\sqrt{\mu}(x_{k+2}-x^*)+\Delta_2\nabla f(x_{k+1})\|^2.
    \end{equation*} 
    Simplifying $M_1^k$ using \eqref{strong disturb symplectic iteration}, we obtain
    \begin{align*}
    M_1^k
        =&\langle v_{k+1}+\sqrt{\mu}(x_{k+2}-x^*)+\Delta_2\nabla f(x_{k+1}), -\sqrt{\mu s}v_{k+1}-(1+\Delta_1)\sqrt{s}\nabla f(x_{k+1}) \rangle\\
            &-\frac{1}{2}\|\sqrt{\mu s}v_{k+1}+(1+\Delta_1)\sqrt{s}\nabla f(x_{k+1})\|^2.
    \end{align*}
    Since $x_{k+2}=x_{k+1}+\sqrt{s}v_{k+1}$, we have
    \begin{align*}
        M_1^k+M_2^k=&\langle (1+\sqrt{\mu s})v_{k+1}+\sqrt{\mu}(x_{k+1}-x^*)+\Delta_2\nabla f(x_{k+1}), -\sqrt{\mu s}v_{k+1}-(1+\Delta_1)\sqrt{s}\nabla f(x_{k+1}) \rangle\\
            &-\frac{1}{2}\|\sqrt{\mu s}v_{k+1}+(1+\Delta_1)\sqrt{s}\nabla f(x_{k+1})\|^2\\
            &+\frac{\sqrt{\mu s}}{2(1+\sqrt{\mu s})}\|(1+\sqrt{\mu s})v_{k+1}+\sqrt{\mu}(x_{k+1}-x^*)+\Delta_2\nabla f(x_{k+1})\|^2\\
            =
            &-\frac{\sqrt{\mu s}}{2}(1+2\sqrt{\mu s})\|v_{k+1}\|^2+\frac{\mu\sqrt{\mu s}}{2(1+\sqrt{\mu s})}\|x_{k+1}-x^*\|^2\\
            &+\big(\frac{\sqrt{\mu s}}{2(1+\sqrt{\mu s})}\Delta_2^2-\Delta_2\sqrt{s}(1+\Delta_1)-\frac{1}{2}(1+\Delta_1)^2s\big)\|\nabla f(x_{k+1})\|^2\\
            &-(1+\Delta_1)\sqrt{s}(1+2\sqrt{\mu s})\langle v_{k+1},\nabla f(x_{k+1})\rangle\\
            &+\big(\frac{\sqrt{\mu s}}{1+\sqrt{\mu s}}\sqrt{\mu}\Delta_2-\sqrt{\mu s}(1+\Delta_1)\big)\langle x_{k+1}-x^*, \nabla f(x_{k+1}) \rangle.
    \end{align*}
    Using \eqref{strong disturb symplectic iteration}, we have 
    \begin{equation*}
    (1+2\sqrt{\mu s})v_{k+1}=v_k-(1+\Delta_1)\sqrt{s}\nabla f(x_{k+1})-\Delta_2\big(\nabla f(x_{k+1})-\nabla f(x_k)\big),
    \end{equation*}
    and thus
    \begin{align*}
        &(1+\Delta_1)\sqrt{s}(1+2\sqrt{\mu s})\langle v_{k+1}, \nabla f(x_{k+1}) \rangle\\
        =&(1+\Delta_1)\langle x_{k+1}-x_k, \nabla f(x_{k+1})\rangle -(1+\Delta_1)^2s\|\nabla f(x_{k+1})\|^2\\
        &-(1+\Delta_1)\sqrt{s}\Delta_2\langle \nabla f(x_{k+1})-\nabla f(x_k), \nabla f(x_{k+1})\rangle\\
        =&(1+\Delta_1)\langle x_{k+1}-x_k, \nabla f(x_{k+1})\rangle -(1+\Delta_1)^2s\|\nabla f(x_{k+1})\|^2\\
        &-\frac{1}{2}(1+\Delta_1)\sqrt{s}\Delta_2(\|\nabla f(x_{k+1})\|^2-\|\nabla f(x_k)\|^2+\|\nabla f(x_{k+1})-\nabla f(x_k)\|^2),
    \end{align*}
    where the first equality is due to $\sqrt{s}v_k=x_{k+1}-x_k$, the second equality follows from
    \begin{align*}
        \langle \nabla f(x_{k+1})-\nabla f(x_k), \nabla f(x_{k+1})\rangle=\frac{1}{2}(\|\nabla f(x_{k+1})\|^2-\|\nabla f(x_k)\|^2+\|\nabla f(x_{k+1})-\nabla f(x_k)\|^2).
    \end{align*}
    Then, it holds that
    \begin{equation}
        \label{eq: M1M2 in direct symplectic discretization}
        \begin{aligned}
            M_1^k+M_2^k=&-\frac{\sqrt{\mu s}}{2}(1+2\sqrt{\mu s})\|v_{k+1}\|^2+\frac{\mu \sqrt{\mu s}}{2(1+\sqrt{\mu s})}\|x_{k+1}-x^*\|^2\\
        &+\big(\frac{\sqrt{\mu s}}{2(1+\sqrt{\mu s})}\Delta_2^2-\Delta_2\sqrt{s}(1+\Delta_1)+\frac{1}{2}(1+\Delta_1)^2s\big)\|\nabla f(x_{k+1})\|^2\\
        &-(1+\Delta_1)\langle x_{k+1}-x_k,\nabla f(x_{k+1})\rangle +\big(\frac{\sqrt{\mu s}}{1+\sqrt{\mu s}}\sqrt{\mu}\Delta_2-\sqrt{\mu s}(1+\Delta_1)\big)\langle x_{k+1}-x^*, \nabla f(x_{k+1})\rangle\\
        &+\frac{1}{2}(1+\Delta_1)\Delta_2\sqrt{s}(\|\nabla f(x_{k+1})\|^2-\|\nabla f(x_k)\|^2+\|\nabla f(x_{k+1})-\nabla f(x_k)\|^2).
        \end{aligned}
    \end{equation}
    Now, we combine \eqref{eq: diffEk in direct symplectic discretization} and \eqref{eq: M1M2 in direct symplectic discretization} and obtain
    \begin{align*}
        &\Big(1+\frac{\sqrt{\mu s}}{1+\sqrt{\mu s}}\Big)^{-k}\big[E(k+1)-E(k)\big]\\
        =&(1+\Delta_1)\big(f(x_{k+1})-f(x_k)\big)+\frac{\sqrt{\mu s}}{1+\sqrt{\mu s}}(1+\Delta_1)\big(f(x_{k+1})-f(x^*)\big)\\
        &-\frac{\Delta_2\sqrt{s}}{2}(1+\Delta_1)(\|\nabla f(x_{k+1})\|^2-\|\nabla f(x_k)\|^2)-\frac{\Delta_2\sqrt{\mu}s}{2(1+\sqrt{\mu s})}(1+\Delta_1)\|\nabla f(x_{k+1})\|^2\\
        &-\frac{\sqrt{\mu s}}{2}(1+2\sqrt{\mu s})\|v_{k+1}\|^2+\frac{\mu \sqrt{\mu s}}{2(1+\sqrt{\mu s})}\|x_{k+1}-x^*\|^2\\
        &+\big(\frac{\sqrt{\mu s}}{2(1+\sqrt{\mu s})}\Delta_2^2-\Delta_2\sqrt{s}(1+\Delta_1)+\frac{1}{2}(1+\Delta_1)^2s\big)\|\nabla f(x_{k+1})\|^2\\
        &-(1+\Delta_1)\langle x_{k+1}-x_k,\nabla f(x_{k+1})\rangle +\big(\frac{\sqrt{\mu s}}{1+\sqrt{\mu s}}\sqrt{\mu}\Delta_2-\sqrt{\mu s}(1+\Delta_1)\big)\langle x_{k+1}-x^*, \nabla f(x_{k+1})\rangle\\
        &+\frac{1}{2}(1+\Delta_1)\Delta_2\sqrt{s}(\|\nabla f(x_{k+1})\|^2-\|\nabla f(x_k)\|^2+\|\nabla f(x_{k+1})-\nabla f(x_k)\|^2)\\
        =&(1+\Delta_1)\big(f(x_{k+1})-f(x_k)+\langle \nabla f(x_{k+1}), x_k-x_{k+1} \rangle + \frac{\Delta_2\sqrt{s}}{2}\|\nabla f(x_{k+1})-\nabla f(x_k)\|^2\big)\\
        &+\frac{\sqrt{\mu s}}{1+\sqrt{\mu s}}\big(f(x_{k+1})-f(x^*)+\langle x^*-x_{k+1},\nabla f(x_{k+1})\rangle+\frac{\mu}{2}\|x_{k+1}-x^*\|^2 \big)\\
        &+\frac{\sqrt{\mu s}}{1+\sqrt{\mu s}}\Delta_1 \big( f(x_{k+1})-f(x^*)+\langle x^*-x_{k+1},\nabla f(x_{k+1})\rangle \big)\\
        &-\frac{\sqrt{\mu s}}{2}(1+2\sqrt{\mu s})\|v_{k+1}\|^2 + \frac{\sqrt{\mu s}}{1+\sqrt{\mu s}}\big(\sqrt{\mu}\Delta_2-\sqrt{\mu s}(1+\Delta_1)\big)\langle x_{k+1}-x^*, \nabla f(x_{k+1})\rangle\\
        &+\frac{1}{2}\|\nabla f(x_{k+1})\|^2\big(-\frac{\Delta_2\sqrt{\mu}s}{1+\sqrt{\mu s}}(1+\Delta_1)+\frac{\sqrt{\mu s}}{1+\sqrt{\mu s}}\Delta_2^2-2\Delta_2\sqrt{s}(1+\Delta_1)^2+(1+\Delta_1)^2s\big).
    \end{align*}
By the $\mu$-strong convexity and $L$-smoothness of $f$, we obtain
    \begin{equation*}
        \left\{
        \begin{aligned}
            &f(x_{k+1})+\langle \nabla f(x_{k+1}),x_k-x_{k+1} \rangle +\frac{1}{2L}\|\nabla f(x_{k+1})-\nabla f(x_k)\|^2 \leqslant f(x_k),\\
            &f(x_{k+1})+\langle \nabla f(x_{k+1}),x^*-x_{k+1}\rangle +\frac{1}{2L}\|\nabla f(x_{k+1})\|^2 \leqslant f(x^*),\\
            &f(x_{k+1})+\langle \nabla f(x_{k+1}),x^*-x_{k+1}\rangle +\frac{\mu}{2}\|x_{k+1}-x^*\|^2 \leqslant
            f(x^*).\\
            &\langle x_{k+1}-x^*, \nabla f(x_{k+1}) \rangle \geqslant \frac{1}{L}\|\nabla f(x_{k+1})\|^2.
        \end{aligned}
        \right.
    \end{equation*}
The above inequalities further imply that 
    \begin{align*}
        &\Big(1+\frac{\sqrt{\mu s}}{1+\sqrt{\mu s}}\Big)^{-k}\big[E(k+1)-E(k)\big]\\
        \leqslant&\frac{1+\Delta_1}{2}(\Delta_2\sqrt{s}-\frac{1}{L})\|\nabla f(x_{k+1})-\nabla f(x_k)\|^2 -\frac{\sqrt{\mu s}}{2}(1+2\sqrt{\mu s})\|v_{k+1}\|^2\\
        &+\frac{1}{2}\|\nabla f(x_{k+1})\|^2
        \Big[-\frac{\Delta_2\sqrt{\mu}s}{1+\sqrt{\mu s}}(1+\Delta_1)+\frac{\sqrt{\mu s}}{1+\sqrt{\mu s}}\Delta_2^2-2\Delta_2\sqrt{s}(1+\Delta_1)\\
        &+(1+\Delta_1)^2s-\frac{\sqrt{\mu s}}{1+\sqrt{\mu s}}\frac{\Delta_1}{L}+\frac{2\sqrt{\mu s}}{(1+\sqrt{\mu s})L}\big(\sqrt{\mu}\Delta_2-\sqrt{\mu s}(1+\Delta_1)\big)\Big]\\
        \leqslant & 0,
    \end{align*}
where the last inequality follows from \eqref{conditions of direct symplectic 2}. 
Then, it holds by recalling the definition of $E(k)$ in \eqref{Lyapunov for symplectic 2} that
$$f(x_{k})-f(x^*)-\frac{\Delta_2\sqrt{s}}{2}\|\nabla f(x_k)\|^2\leqslant \frac{1}{1+\Delta_1}\Big(1+\frac{\sqrt{\mu s}}{1+\sqrt{\mu s}}\Big)^{-k}E(k) \leqslant \frac{1}{1+\Delta_1}\Big(1+\frac{\sqrt{\mu s}}{1+\sqrt{\mu s}}\Big)^{-k}E(0).$$

{Next, we prove \eqref{eq:f_rate_symplectic}. Since $f$ is $L$-smooth and $x^*$ is the optimal solution, we have
\begin{equation*}
    \|\nabla f(x_{k})\|^2 \leqslant 2L \big(f(x_k)-f(x^*)\big),
\end{equation*}
which, together with the above inequality and the condition that $\Delta_2\sqrt{s}< 1/L$, implies that
\begin{align*}
    f(x_k)-f(x^*) \leqslant & \frac{1}{1-L\Delta_2\sqrt{s}}\big(f(x_k)-f(x^*)-\frac{\Delta_2\sqrt{s}}{2}\|\nabla f(x_k)\|^2 \big)\\ \leqslant & \frac{1}{(1-L\Delta_2\sqrt{s})(1+\Delta_1)}\Big(1+\frac{\sqrt{\mu s}}{1+\sqrt{\mu s}}\Big)^{-k}E(0).
\end{align*}
}
This completes the proof of the theorem.
\end{proof}

In the above theorem, condition \eqref{conditions of direct symplectic 2} seems to be complicated. By simple calculations, we can reformulate (3) in \eqref{conditions of direct symplectic 2} to be:
\begin{equation*}
    \begin{aligned}
        &\big(\Delta_2-\frac{\sqrt{s}}{2}(1+\Delta_1)\big)\big(\frac{\sqrt{\mu s}}{1+\sqrt{\mu s}}\big(\Delta_2-\frac{\sqrt{s}}{2}(1+\Delta_1)\big)-2\sqrt{s}(1+\Delta_1)\big)-\frac{s}{4}(1+\Delta_1)^2\\
        &\qquad +\frac{\sqrt{\mu s}}{1+\sqrt{\mu s}}\frac{2\sqrt{\mu}}{L}\big(\Delta_2-\sqrt{s}(1+\Delta_1)\big)-\frac{\sqrt{\mu s}}{1+\sqrt{\mu s}}\Delta_1 \leqslant 0.
    \end{aligned}
\end{equation*}
Thus, we can replace (2), (3) in \eqref{conditions of direct symplectic 2} by the following simple sufficient condition
$$\frac{\sqrt{s}}{2}(1+\Delta_1)\leqslant \Delta_2 \leqslant \sqrt{s}(1+\Delta_1).$$
As a result, the following corollary can be readily obtained.
\begin{CO}
    \label{symplectic 2 sufficient condition}
     Suppose that $f$ is $\mu$-strongly and $L$-smooth. If the non-negative perturbation parameters $\Delta_1, \Delta_2$, and step size $s>0$ satisfy the conditions:
\begin{equation}
    \label{conditions for symplectic 2 sufficient}
    \begin{aligned}
        (1)& \,  \Delta_2\sqrt{s}< \frac{1}{L};\\
        (2)& \, \frac{\sqrt{s}}{2}(1+\Delta_1)\leqslant \Delta_2 \leqslant \sqrt{s}(1+\Delta_1),
    \end{aligned}
\end{equation}
then for any initial points $x_0$, $v_0$, it holds that 
\begin{equation}
	\label{eq:ratecorollary}
	f(x_k)-f(x^*)\leqslant \frac{1}{(1-L\Delta_2\sqrt{s})(1+\Delta_1)}\Big(1+\frac{\sqrt{\mu s}}{1+\sqrt{\mu s}}\Big)^{-k}E(0).
\end{equation}
\end{CO}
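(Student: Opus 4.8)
The plan is to obtain Corollary~\ref{symplectic 2 sufficient condition} as an immediate consequence of Theorem~\ref{th:symplectic 2}, by checking that the two-line hypothesis \eqref{conditions for symplectic 2 sufficient} implies the three-line hypothesis \eqref{conditions of direct symplectic 2} together with the strict inequality $\Delta_2\sqrt s<1/L$ that is needed to deduce \eqref{eq:f_rate_symplectic}. Condition~(1) of the corollary is condition~(1) of the theorem strengthened to a strict inequality, so it gives both at once; and the right-hand inequality in condition~(2) of the corollary is exactly condition~(2) of the theorem. Hence the entire content of the proof is to verify condition~(3) of \eqref{conditions of direct symplectic 2}.

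For that verification I would use the reformulation of (3) already recorded just after Theorem~\ref{th:symplectic 2}, namely that (3) is equivalent to
\[
\Big(\Delta_2-\tfrac{\sqrt s}{2}(1+\Delta_1)\Big)\Big(\tfrac{\sqrt{\mu s}}{1+\sqrt{\mu s}}\big(\Delta_2-\tfrac{\sqrt s}{2}(1+\Delta_1)\big)-2\sqrt s(1+\Delta_1)\Big)-\tfrac{s}{4}(1+\Delta_1)^2+\tfrac{\sqrt{\mu s}}{1+\sqrt{\mu s}}\tfrac{2\sqrt\mu}{L}\big(\Delta_2-\sqrt s(1+\Delta_1)\big)-\tfrac{\sqrt{\mu s}}{1+\sqrt{\mu s}}\Delta_1\leqslant 0.
\]
Setting $a:=\Delta_2-\tfrac{\sqrt s}{2}(1+\Delta_1)$, condition~(2) of the corollary says precisely $0\leqslant a\leqslant \tfrac{\sqrt s}{2}(1+\Delta_1)$, and in particular $\Delta_2-\sqrt s(1+\Delta_1)\leqslant 0$. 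I would then argue that each of the four summands on the left-hand side is nonpositive: (i) since $0\leqslant \tfrac{\sqrt{\mu s}}{1+\sqrt{\mu s}}<1$ and $a\leqslant\tfrac{\sqrt s}{2}(1+\Delta_1)<2\sqrt s(1+\Delta_1)$, the bracketed second factor is negative, and $a\geqslant 0$ makes the product nonpositive; (ii) $-\tfrac{s}{4}(1+\Delta_1)^2\leqslant 0$ trivially; (iii) the third term is a nonnegative constant times $\Delta_2-\sqrt s(1+\Delta_1)\leqslant 0$; (iv) $-\tfrac{\sqrt{\mu s}}{1+\sqrt{\mu s}}\Delta_1\leqslant 0$ because $\Delta_1\geqslant 0$. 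Summing the four nonpositive terms gives the desired inequality, so condition~(3) holds.

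Once \eqref{conditions of direct symplectic 2} is established and $\Delta_2\sqrt s<1/L$, the proof finishes by invoking Theorem~\ref{th:symplectic 2}, whose conclusion \eqref{eq:f_rate_symplectic} is exactly \eqref{eq:ratecorollary}. I do not anticipate any genuine obstacle here: the corollary is a repackaging of the theorem, and the only step needing a little care is the bookkeeping in the term-by-term sign check above — in particular the use of $0<\tfrac{\sqrt{\mu s}}{1+\sqrt{\mu s}}<1$, which is what tames the product term. If one preferred not to rely on the pre-stated reformulation of (3), the only extra (still routine) work would be to re-derive that algebraic identity by completing the square in $\Delta_2$ in the expression appearing in \eqref{conditions of direct symplectic 2}.
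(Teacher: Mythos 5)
Your proposal is correct and follows the same route that the paper itself sketches just before the corollary: reduce to Theorem~\ref{th:symplectic 2} by checking that the corollary's hypotheses imply \eqref{conditions of direct symplectic 2}, with the only real work being condition~(3), which you handle by a term-by-term sign check on the reformulated expression. One caveat worth flagging: the reformulation of~(3) as printed in the paper contains two small typos — the constant term should carry the factor $\frac{\sqrt{\mu s}}{1+\sqrt{\mu s}}$ (i.e., it should read $-\frac{\sqrt{\mu s}}{1+\sqrt{\mu s}}\cdot\frac{s}{4}(1+\Delta_1)^2$), and the final term should retain the $1/L$ from the original (i.e., $-\frac{\sqrt{\mu s}}{1+\sqrt{\mu s}}\cdot\frac{\Delta_1}{L}$ rather than $-\frac{\sqrt{\mu s}}{1+\sqrt{\mu s}}\Delta_1$). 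Happily, neither typo disturbs your argument: with the corrected constants each of your four summands is still manifestly nonpositive under \eqref{conditions for symplectic 2 sufficient}, so the sign check and therefore the corollary go through exactly as you wrote. Since you already propose to re-derive the identity by completing the square in $\Delta_2$ as a sanity check, doing so would surface these corrections and fully close the argument.
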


Here, we shall compare the rate result in \eqref{eq:ratecorollary} with that in \cite{Shi_2019_acceleration}. Under the same setting as in \cite[Theorem 3.1(a)]{Shi_2019_acceleration}, i.e., $\Delta_1=\sqrt{\mu s}$, $\Delta_2=\sqrt{s}$, and $s=4/(9L)$, the result in \eqref{eq:ratecorollary} yields \[f(x_k)-f(x^*)={\cal O}\Big( \big(1+\frac{2\sqrt{\mu/L}}{3+2\sqrt{\mu/L}})^{-k}\Big)={\cal O}\big( (1+\frac{2}{5} \sqrt{\mu/L})^{-k}\big),\]
which improve the result $f(x_k)-f(x^*)={\cal O}\big((1+1/9\sqrt{\mu/L})^{-k}\big)$ in \cite[Theorem 3.1(a)]{Shi_2019_acceleration}. This comparison demonstrates that proper perturbations could further accelerate the convergence rate. In fact, we can obtain a class of accelerated algorithms with an even larger step-size, i.e., $s = 1/L$ and a better convergence rate. 
We summarize the corresponding algorithm in Algorithm \ref{algorithm:symplectic discretization 1}. It can be regarded as a generalization of the one obtained in \cite{Shi_2019_acceleration} by applying the symplectic scheme on the high-resolution system for NAG-SC \eqref{high resolution for NAG-SC}.
Under the following condition on the perturbation parameters $\Delta_1$ and $\Delta_2$  
\[(1+\Delta_1)/2 \le \sqrt{L}\Delta_2 < 1,\]
one can show by \eqref{eq:ratecorollary} that the sequence generated by Algorithm \ref{algorithm:symplectic discretization 1} satisfies
\[f(x_k)-f(x^*)={\cal O}\big((1+\frac{\sqrt{\mu/L}}{1+\sqrt{\mu/L}})^{-k}\big)={\cal O}\Big((1+\frac{1}{2}\sqrt{\mu/L})^{-k}\Big).\]

\begin{algorithm}
\caption{Optimization algorithm derived from the symplectic Euler discretization}
\label{algorithm:symplectic discretization 1}
\begin{algorithmic}[1]
\State{Choose $x_0 \in \cH$ and  $\Delta_1, \Delta_2 \ge 0$ satisfying $(1+\Delta_1)/2\leqslant \sqrt{L}\Delta_2<1$, and set $x_1=x_0$.}
\For{$k=1,2,\ldots$}
\State{$x_{k+1}=x_k+\frac{1}{1+2\sqrt{\mu/L}}(x_k-x_{k-1})-\frac{1+\Delta_1}{(1+2\sqrt{\mu/L})L}\nabla f(x_k)-\frac{\Delta_2}{(1+2\sqrt{\mu/L})\sqrt{L}}\big(\nabla f(x_k)-\nabla f(x_{k-1})\big)$.}
\EndFor
\end{algorithmic}
\end{algorithm}

Next, we examine the special choices of perturbation parameters $\Delta_1$ and $\Delta_2$, and discuss relations with known results. We start with the unperturbed case where $\Delta_1 = \Delta_2=0$. Then, 
\eqref{conditions of direct symplectic 2} requires that
\begin{equation}
    \label{eq: conditions for direct symplectic when delta1=delta2=0}
    s\leqslant \frac{2\mu s}{(1+\sqrt{\mu s})L} \, \mbox{ or equivalently } \, \sqrt{\mu s} \leqslant \frac{2\mu}{L} - 1.
\end{equation}
Therefore, in this case, the desirable step size $s$ may not exist when $\mu/L \leqslant 1/2$. This indicates that the symplectic discretization of the low-resolution ODE \eqref{low resolution for strong convex} may be difficult to obtain an accelerated convergence rate. 
Indeed, it has been shown in \cite{Shi_2019_acceleration} that the algorithm obtained by the symplectic discretization of the phase-space form of the low-resolution ODE \eqref{low resolution for strong convex} enjoys the convergence rate \[f(x_k)-f(x^*)={\cal O}\big((1+\frac{1}{4}\sqrt{\mu s})^{-k}\big)\] for the step size $s$ satisfying $0<s\leqslant \mu/(16L^2)$, thus not achieving acceleration. Next, we consider the case where $\Delta_1 = 0$ and $\Delta_2 >0$. In this case, as discussed before, by setting $s = 1/L$ and $1/2\sqrt{L} \leqslant \Delta_2 < 1/\sqrt{L}$, one can obtain a class of algorithms with the following accelerated convergence rate 
\[f(x_k) - f(x^*) = {\cal O}\Big((1+\frac{1}{2}\sqrt{\mu/L})^{-k}\Big). \]
Lastly, we consider the case where $\Delta_1 >0$ and $\Delta_2 = 0$. In this case, 
\eqref{conditions of direct symplectic 2} becomes:
\begin{equation}
    \label{eq: condition of direct symplectic discretization when delta2=0}
    s\leqslant \frac{2\mu s}{(1+\sqrt{\mu s})L}\frac{1}{1+\Delta_1}+\frac{\sqrt{\mu s}}{(1+\sqrt{\mu s})L}\frac{\Delta_1}{(1+\Delta_1)^2}.
\end{equation}
Simple calculations assert that all the possible step sizes $s$ satisfying \eqref{eq: condition of direct symplectic discretization when delta2=0} are of the order ${\cal O}(\mu/L^2)$, which is the same order obtained in  \cite[Theorem 3.2(a)]{Shi_2019_acceleration}. Then, \eqref{eq:f_rate_symplectic} in Theorem \ref{th:symplectic 2} implies the non-accelerated rate
$
f(x^k) - f(x^*) = {\cal O}\big( (1 + \frac{1}{2} \mu/L)^{-k}\big),
$
coinciding with the conclusion in \cite[Theorem 3.2(a)]{Shi_2019_acceleration}.

The above discussions highlight that
unlike the continuous case and the case of implicit discretization, with only the perturbation term $\Delta_1\nabla f(x_k)$ in \eqref{strong disturb symplectic iteration} using the symplecitc discretization (i.e., $\Delta_1 >0$ and $\Delta_2 =0$), the sequence generated by \eqref{eq: iteration of direct symplectic discretization} fails to achieve acceleration. Meanwhile, our findings also align with previous work indicating that $\Delta_2 >0$ is crucial for enabling large step sizes and desired accelerated convergence rates of $f(x_k)-f(x^*)$. Similar to our physical interpretation on the gradient perturbation term $-\Delta_2 \nabla^2 f(X_t) \dot{X_t}$, we observe here that for the symplectic discretization scheme \eqref{strong disturb symplectic iteration}, involving only the gradient perturbation term $\Delta_2\big(\nabla f(x_{k+1})-\nabla f(x_k)\big)/\sqrt{s}$ (i.e., $\Delta_1 =0$ and $\Delta_2 >0$ in \eqref{strong disturb symplectic iteration}) may not be a good choice. In fact, as is shown in the following example on minimizing a toy convex quadratic function, the convergence rate corresponding to the case $\Delta_1 =0$ and $\Delta_2 >0$  can be slower than the unperturbed case $\Delta_1 = \Delta_2 = 0$, while a proper choice of $\Delta_1 >0$ and $\Delta_2 >0$ could result better convergence rate. For this purpose, we define $A = {\rm Diag}([\mu; L])$ with given $0 < \mu < L$ and consider 
\[ \min_{x\in\mathbb{R}^2} f(x): = \frac{1}{2} \inprod{x}{Ax}. \]
For this special problem, the symplectic discretization scheme \eqref{eq: iteration of direct symplectic discretization} generates the sequence $\{x_k = [z_k^1; z^2_k]\}$ in the following way:
\begin{equation*}
    \label{eq: one dimension iteration}
    z_{k+1}^i=\big(1-\frac{\lambda_i s (1+\Delta_1)}{1+2\sqrt{\mu s}}+\frac{1-\lambda_i\sqrt{s} \Delta_2}{1+2\sqrt{\mu s}}\big)z_k^i+\frac{\lambda_i \sqrt{s}\Delta_2-1}{1+2\sqrt{\mu s}}z_{k-1}^i,
\end{equation*}
where $i=1,2$, and $\lambda_1 = \mu$ and $\lambda_2 = L$. In the following analysis, the step-size $s$ is set to be $s = 1/L$. For the case $\Delta_1 =0$ and $\Delta_2 = 1/\sqrt{L}$, a tight analysis reveals that 
\[
f(x^k) = {\cal O}\big ((\frac{1+\sqrt{\mu/L}}{1+2\sqrt{\mu/L}} )^{2k}\big ),
\] 
which is slower than that for the unperturbed case (i.e., $\Delta_1 = \Delta_2 = 0$) with
\[
f(x^k) = {\cal O}\big ((\frac{1}{1+2\sqrt{\mu/L}} )^{k}\big ).
\] 
Now, if we set $\Delta_1 = \sqrt{\mu/L}$ and $\Delta_2 = 1/\sqrt{L}$, the convergence rate can be further improved to at least
\[
f(x^k) = {\cal O}\Big (\big(\frac{1+2\sqrt{\mu/L}-3\mu^2/(4L^2)}{(1+2\sqrt{\mu/L})^2} \big)^{k}\Big ).
\]
These findings will be further illustrated through numerical experiments in the next section.

We further note that in the updating scheme \eqref{eq: iteration of direct symplectic discretization}, the momentum coefficient is 
$1/(1+2\sqrt{\mu s})$ rather than  $(1-\sqrt{\mu s})/(1+\sqrt{\mu s})$ as in the classic NAG-SC. To obtain the same momentum coefficient as NAG-SC, we propose the following modified symplectic discretization 
\begin{equation}
    \label{strong disturb symplectic 2 iteration}
    \left\{
    \begin{aligned}
        &\frac{x_{k+1}-x_k}{\sqrt{s}}=v_k,\\
        &\frac{v_{k+1}-v_k}{\sqrt{s}}=-\sqrt{\mu}(v_{k+1}+v_k)-(1+\Delta_1)\nabla f(x_{k+1})-\Delta_2\frac{\nabla f(x_{k+1})-\nabla f(x_k)}{\sqrt{s}},
    \end{aligned}
    \right.
\end{equation}
where the key difference between \eqref{strong disturb symplectic 2 iteration} and \eqref{strong disturb symplectic iteration} is the use of $-\sqrt{\mu}(v_{k+1}+v_k)$ rather than $-2\sqrt{\mu}v_{k+1}$ as the discretization of $-2\sqrt{\mu}\dot{X_t}$.
Note that \eqref{strong disturb symplectic 2 iteration} can be rewritten as
\begin{equation}
    \label{eq: iteration of modified symplectic discretization}
    x_{k+1}=x_k+\frac{1-\sqrt{\mu s}}{1+\sqrt{\mu s}}(x_k-x_{k-1})-\frac{1+\Delta_1}{1+\sqrt{\mu s}}s\nabla f(x_{k})-\frac{\Delta_2}{1-\sqrt{\mu s}}\sqrt{s}\big(\nabla f(x_{k})-\nabla f(x_{k-1})\big).
\end{equation}
As promised, in \eqref{eq: iteration of modified symplectic discretization}, we have $(1-\sqrt{\mu s})/(1+\sqrt{\mu s})$ as the momentum coefficient. The analysis for the scheme \eqref{eq: iteration of modified symplectic discretization} is documented in Appendix \ref{sec:appdenx_modified}.

\section{Numerical experiments}
\label{sec: numerical experiments}

In this section, we conduct numerical experiments to verify our theoretical findings. Our primary focus is on the numerical experiments on the direct symplectic discretization scheme \eqref{eq: iteration of direct symplectic discretization}. 
We test the updating scheme \eqref{eq: iteration of direct symplectic discretization} with different perturbation parameters $\Delta_1$ and $\Delta_2$ on minimizing a $\mu$-strongly convex and $L$-smooth function $f$. Specifically, these two perturbation parameters are chosen from the following four cases:
\[
(\Delta_1, \Delta_2) \in \{(0,0), (\widehat{\Delta}_1, 0), (\widehat{\Delta}_2, 0), (\widehat{\Delta}_1, \widehat{\Delta}_2) \}
\] 
with some given $\widehat{\Delta}_1$ and $\widehat{\Delta}_2$. In our tests, we set $s = 1/L$, and choose $\widehat{\Delta_1} \in \{\sqrt{\mu s}, 1 \}$ and $\widehat{\Delta}_2 \in \{ \sqrt{s}, 2\sqrt{s}/3\}$. For comparison, we also run the classic NAG-SC as the baseline algorithm. In the tests, the accuracy of an approximate solution $\tilde x \in \cH$ is measured by $\eta = \|\nabla f(\tilde x)\|$, and the tested algorithms will be terminated if $\eta < \epsilon$ with $\epsilon >0$ being a given stopping tolerance.
Here, we set $\epsilon = 10^{-6}$. The experiments are conducted by running Matlab (version {9.12}) on a notebook ({4}-core, {Intel(R) Core(TM) i5-8250U@1.60GHz, 8 Gigabytes of RAM}).

\subsection{Numerical experiments on convex quadratic programming}
\label{subsec: numerical experiments on convex quadratic programming problem}
We first test the algorithms on the following simple two-dimensional convex quadratic programming problem
\begin{equation}
    \label{eq: quadratic function for numerical experiments}
    \min_{x\in \mathbb{R}^2} f(x)=\frac{1}{2}x^TAx,
\end{equation}
where $A={\rm Diag}([\mu;L])$ with $\mu=1$, $L=100$, as it has been analyzed in Section \ref{subsec: direct symplectic discretization} and thus serves as a clear illustrative example.
For our scheme \eqref{eq: iteration of direct symplectic discretization}, we initialize  
\[
x_0 = [1;1], \quad x_1 = x_0 - \frac{s(1+\Delta_1)}{1+2\sqrt{\mu s}} \nabla f(x_0).
\]  
Figure \ref{pic: all numerical results of xk of direct symplectic discretization} presents a detailed comparison between tested schemes, where the logarithm of the function value difference, \(\log_{10}(f(x_k) - f(x^*))\), is plotted against the iteration count \(k\).  

\begin{figure}
    \centering
    \subfigure[$\hat{\Delta}_1=\sqrt{\mu s}$, $\hat{\Delta}_2=\sqrt{s}$]{
    \label{pic: numerical results of xk of direct symplectic discretization}
    \includegraphics[width=0.49\linewidth]{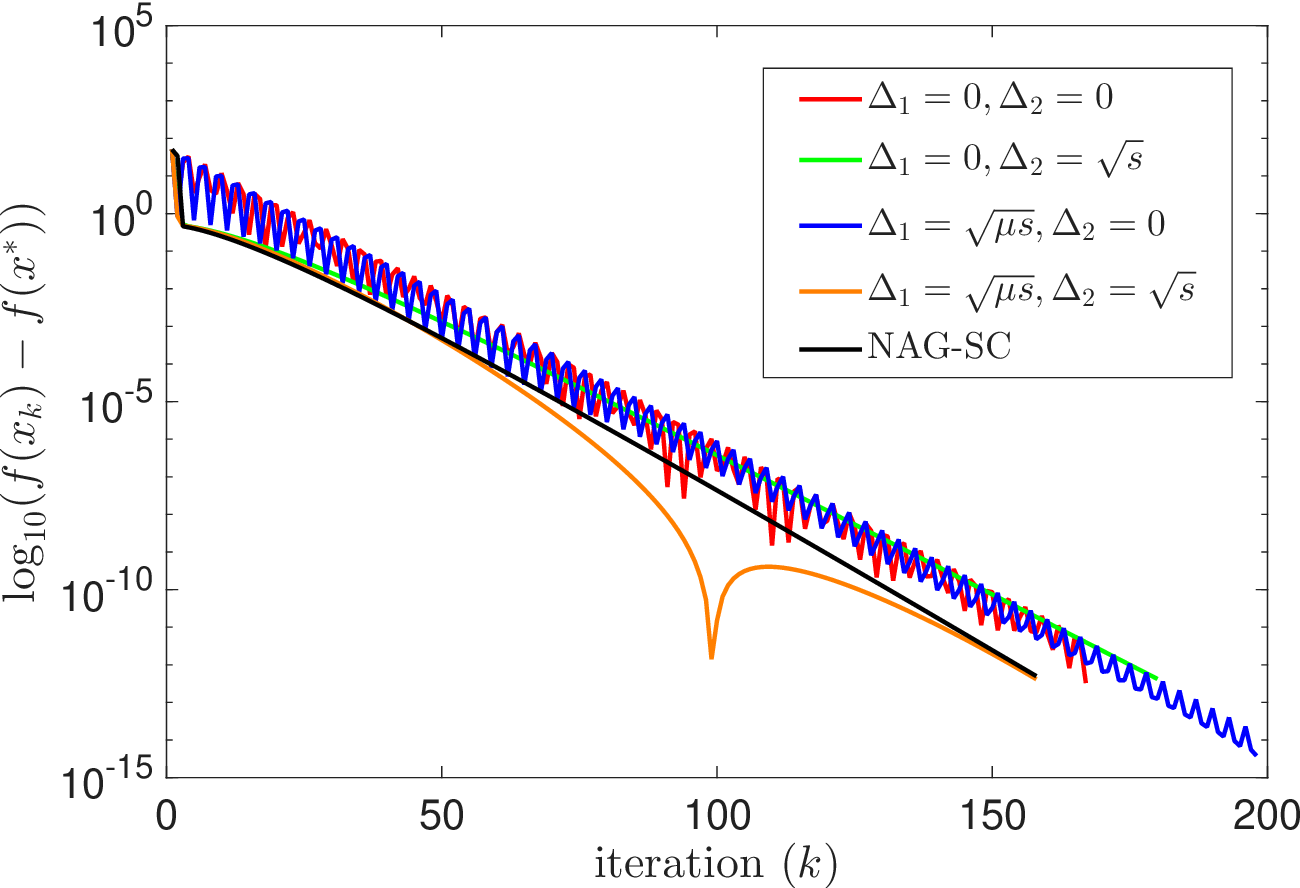}}
    \subfigure[$\hat{\Delta}_1=1$, $\hat{\Delta}_2=\sqrt{s}$]{
    \label{pic: numerical results of xk of direct symplectic discretization for additional delta1}
    \includegraphics[width=0.49\linewidth]{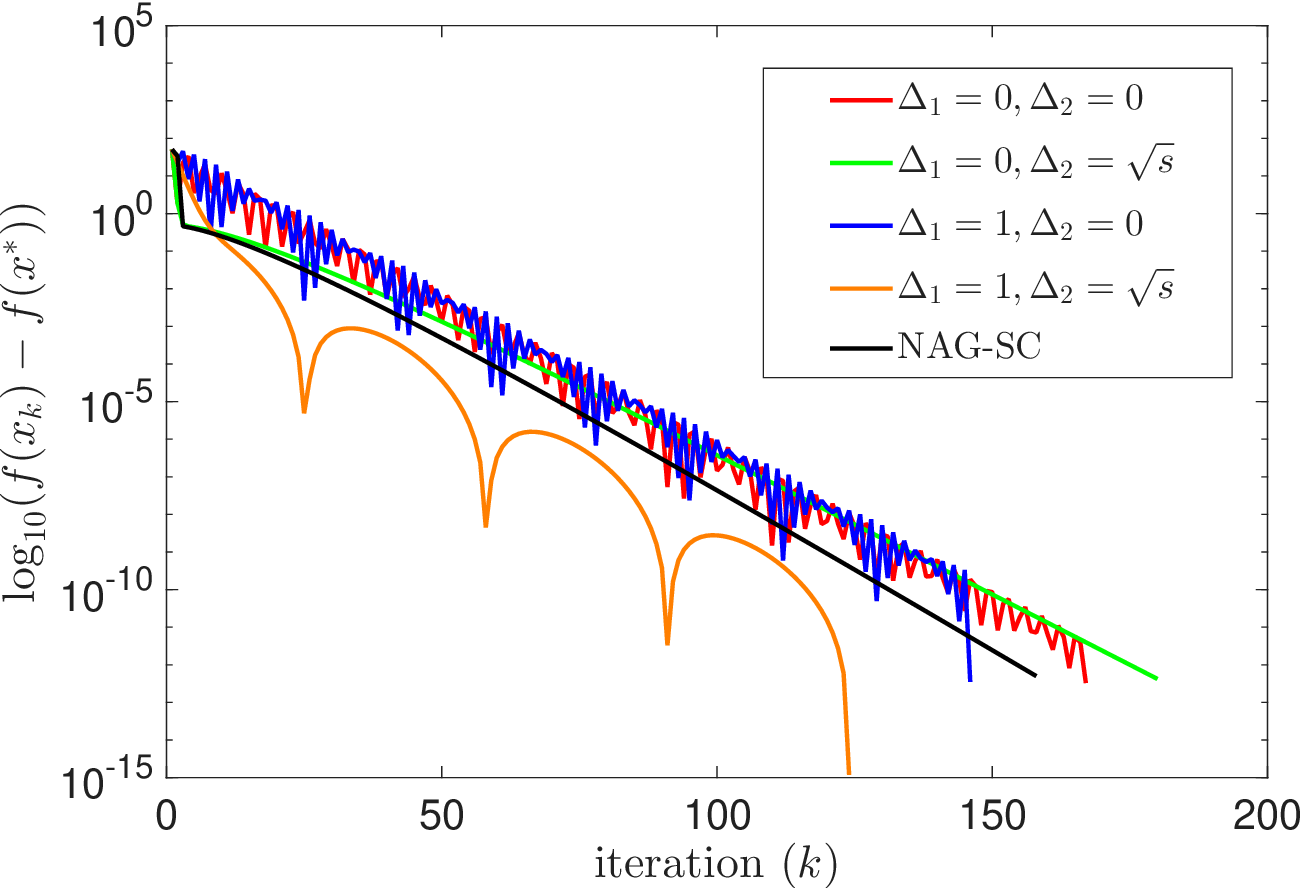}}
    \subfigure[$\hat{\Delta}_1=\sqrt{\mu s}$, $\hat{\Delta}_2=2\sqrt{s}/3$]{
    \label{pic: numerical results of xk of direct symplectic discretization for additional delta2}
    \includegraphics[width=0.49\linewidth]{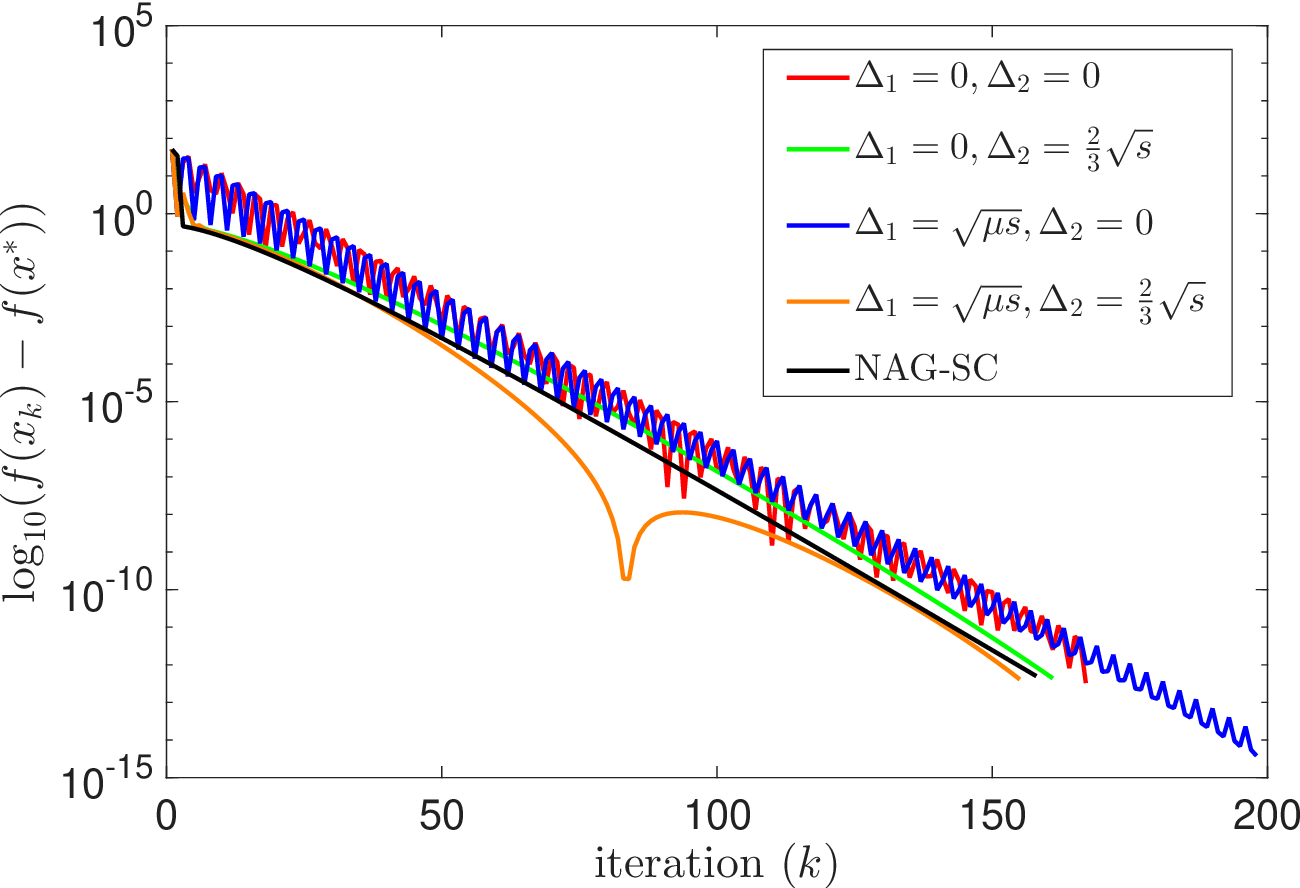}}
    \caption{Numerical comparisons of scheme \eqref{eq: iteration of direct symplectic discretization} with different $(\widehat{\Delta}_1, \widehat{\Delta}_2)$ on solving problem \eqref{eq: quadratic function for numerical experiments}.}
    \label{pic: all numerical results of xk of direct symplectic discretization}
\end{figure}

From Figure \ref{pic: all numerical results of xk of direct symplectic discretization}, it is clear that  
$\Delta_2$ is crucial in reducing oscillations.  
However, the scheme involving only the gradient-correction perturbation, i.e., $\Delta_1 =0, \Delta_2 >0$, may be slower than the scheme with properly chosen values of $\Delta_1 >0$ and $\Delta_2 >0$.  Notably, the gradient perturbation $\Delta_1\nabla f(x_{k+1})$ introduces oscillations, but it can also mitigate the negative effects of the gradient-correction perturbation $\Delta_2 (\nabla f(x_{k+1}) - \nabla f(x_k)) / \sqrt{s}$.  
Thus, an appropriate choice of both $\Delta_1 >0$ and $\Delta_2 >0$ yields a better convergence rate.  These observations are consistent with our discussions in Section \ref{subsec: direct symplectic discretization}.  

The above observations are further verified by comparing Figure  
\ref{pic: numerical results of xk of direct symplectic discretization} with Figures \ref{pic: numerical results of xk of direct symplectic discretization for additional delta1} and \ref{pic: numerical results of xk of direct symplectic discretization for additional delta2}. Specifically, increasing $\Delta_1$ from $\sqrt{\mu s}$ to $1$ intensifies oscillations but accelerates convergence.  Meanwhile, decreasing $\Delta_2$ from $\sqrt{s}$ to $2\sqrt{s}/3$ improves the convergence rate, suggesting that the gradient-correction perturbation $\Delta_2 (\nabla f(x_{k+1}) - \nabla f(x_k)) / \sqrt{s}$ can be harmful to the algorithm.  Additional tests on a convex quadratic programming problem with a larger and more general matrix $A$ are provided in Appendix \ref{sec:appdenx_numerical experiment}. The results there similarly confirm the respective roles of $\Delta_1$ and $\Delta_2$.

\subsection{Numerical experiments on $\ell_2$-regularized logistic regression}
\label{subsec: numerical experiments for direct symplectic discretization}
In this subsection, we focus on solving the following $\ell_2$-regularized logistic regression problem:
\begin{equation}
	\label{eq: logistic regression}
	\min\limits_{x\in\mathbb{R}^n} \; f(x)=\frac{1}{m} \sum_{i=1}^{m}\log(1+e^{-b_ia_i^Tx}) + \frac{\mu}{2} \|x\|_2^2,
\end{equation}
with $\{(a_i,b_i)\}_{i=1}^m$ being $m$ given feature and label pairs and $\mu >0$ being the regularization parameter. 
By simple calculations, we use the following upper bound as an estimate of the Lipschitz constant of $\nabla f$: 
\[ 
L = \frac{1}{4m}\sum_{i=1}^{m}\|a_i\|^2+\mu.
\]
It is not difficult to see that $f$ here is $\mu$-strongly convex and $L$-smooth.
Here, we set the regularization parameter $\mu = 10^{-2}$, the initial point 
\[x_0 = 0 \in \mathbb{R}^n, \quad x_1 = x_0 - \frac{s(1+\Delta_1)}{1+2\sqrt{\mu s}} \nabla f(x_0).\]
In our experiments, we solve problem \eqref{eq: logistic regression} using the pairs $\{(a_i, b_i)\}$ from the LIBSVM datasets {\bf a9a}, {\bf CINA}, and {\bf ijcnn1}. Since the exact solution to problem \eqref{eq: logistic regression} is unavailable, we use the point returned by NAG-SC under a stricter stopping criterion, $\|\nabla f(x_k)\| < 10^{-8}$, as an approximate optimal solution and denote it by \( x^* \). The detailed comparisons on the datasets {\bf CINA}, {\bf a9a} and {\bf ijcnn1} are presented in
Figures \ref{fig:cina}, \ref{fig:a9a}, and \ref{fig:ijcnn1}, respectively.

From Figure \ref{pic: numerical results of xk of direct symplectic discretization2}, Figure \ref{pic: numerical results of xk of direct symplectic discretization2 data1} and Figure \ref{pic: numerical results of xk of direct symplectic discretization2 data2}, we observe that involving only the perturbation term $\Delta_1 \nabla f(x_{k+1})$, i.e., setting $\Delta_2 =0$ in \eqref{eq: iteration of direct symplectic discretization}, results in persistent oscillations and slow convergence. In contrast, introducing a non-zero gradient correction perturbation term, i.e., setting $\Delta_2 >0$, significantly reduces oscillations and accelerates convergence. 
These observations are consistent with the theoretical results in Theorem \ref{th:symplectic 2} and Corollary \ref{symplectic 2 sufficient condition}.
The desired results can be further verified 
through the comparison between Figure \ref{pic: numerical results of xk of direct symplectic discretization2}
and Figure \ref{pic: numerical results of xk of direct symplectic discretization2 for additional delta1}, Figure \ref{pic: numerical results of xk of direct symplectic discretization2 for additional delta2} on dataset {\bf CINA}, Figure \ref{pic: numerical results of xk of direct symplectic discretization2 data1} and Figure \ref{pic: numerical results of xk of direct symplectic discretization2 data1 delta1}, Figure \ref{pic: numerical results of xk of direct symplectic discretization2 data1 delta2} on dataset {\bf a9a}, Figure \ref{pic: numerical results of xk of direct symplectic discretization2 data2} and Figure \ref{pic: numerical results of xk of direct symplectic discretization2 data2 delta1}, Figure \ref{pic: numerical results of xk of direct symplectic discretization2 data2 delta2} on dataset {\bf ijcnn1}. Notably, increasing $\Delta_1$ from $\sqrt{\mu s}$ to 1 exacerbates oscillations in function values, whereas decreasing $\Delta_2$ from $\sqrt{s}$ to $2\sqrt{s}/3$ further intensifies these oscillations.

\begin{figure}[h]
    \centering
    \subfigure[$\hat{\Delta}_1=\sqrt{\mu s}$, $\hat{\Delta}_2=\sqrt{s}$]{
    \label{pic: numerical results of xk of direct symplectic discretization2}
    \includegraphics[width=0.49\linewidth]{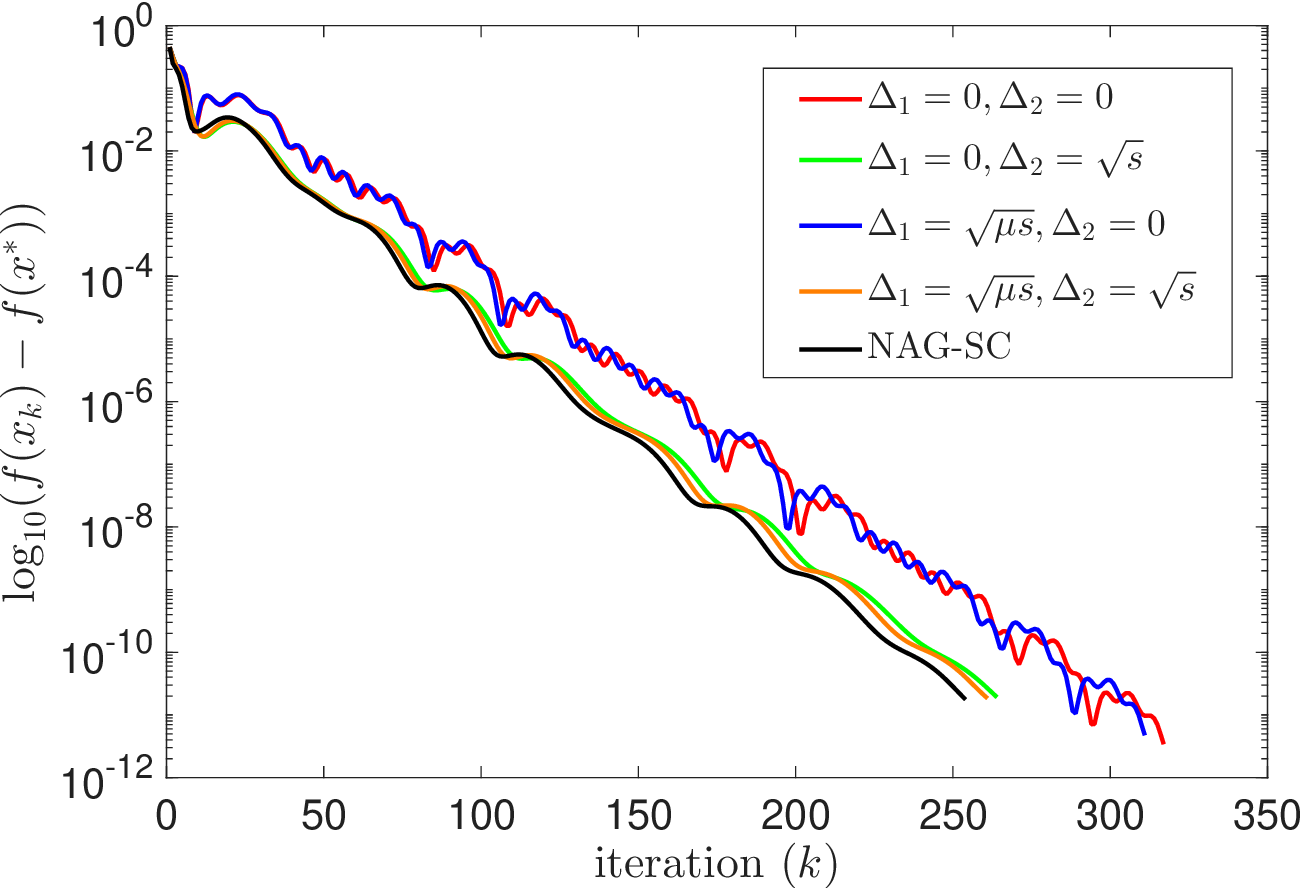}}
    \subfigure[$\hat{\Delta}_1=1$, $\hat{\Delta}_2=\sqrt{s}$]{
    \label{pic: numerical results of xk of direct symplectic discretization2 for additional delta1}
    \includegraphics[width=0.49\linewidth]{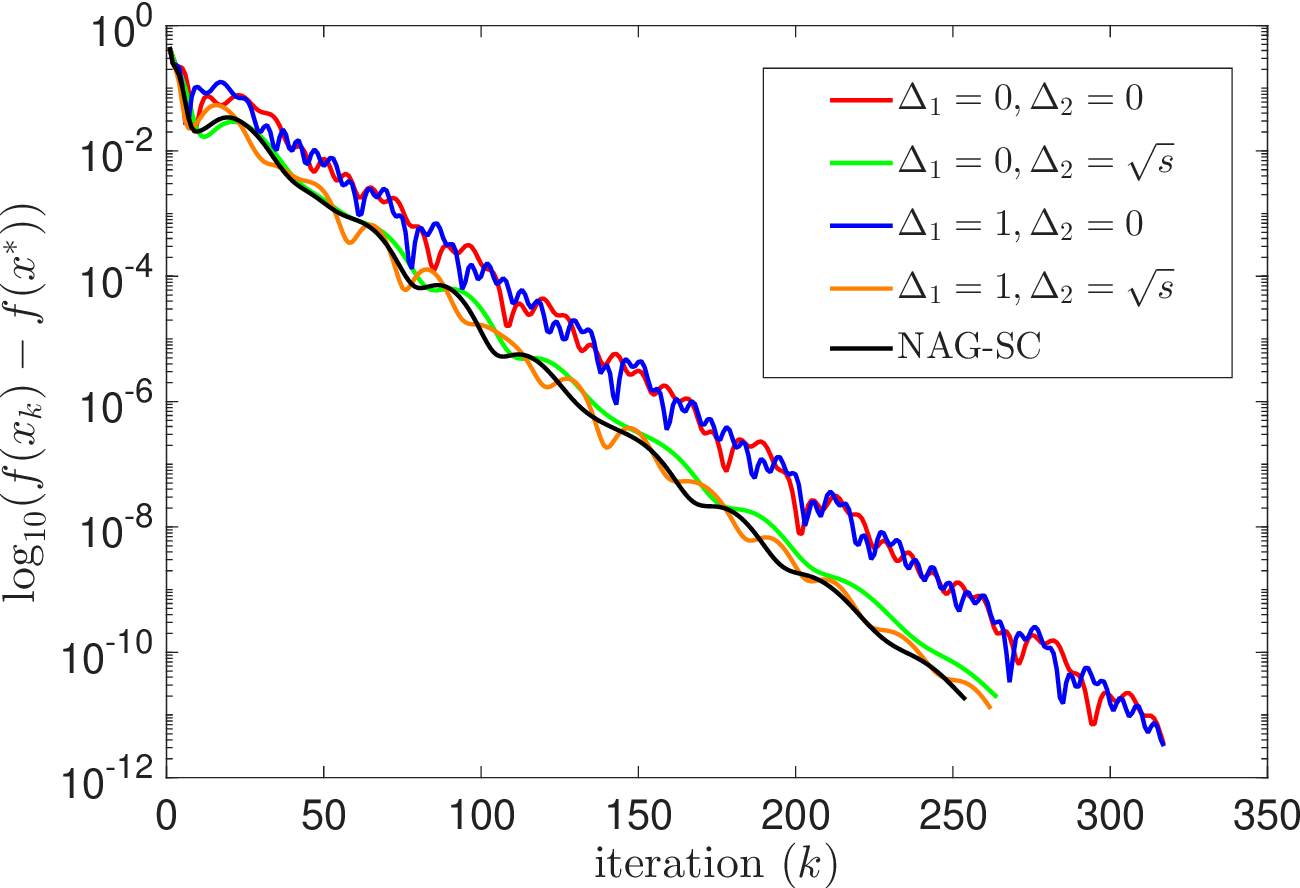}}
    \subfigure[$\hat{\Delta}_1=\sqrt{\mu s}$, $\hat{\Delta}_2=2\sqrt{s}/3$]
    {\label{pic: numerical results of xk of direct symplectic discretization2 for additional delta2}
    \includegraphics[width=0.49\linewidth]{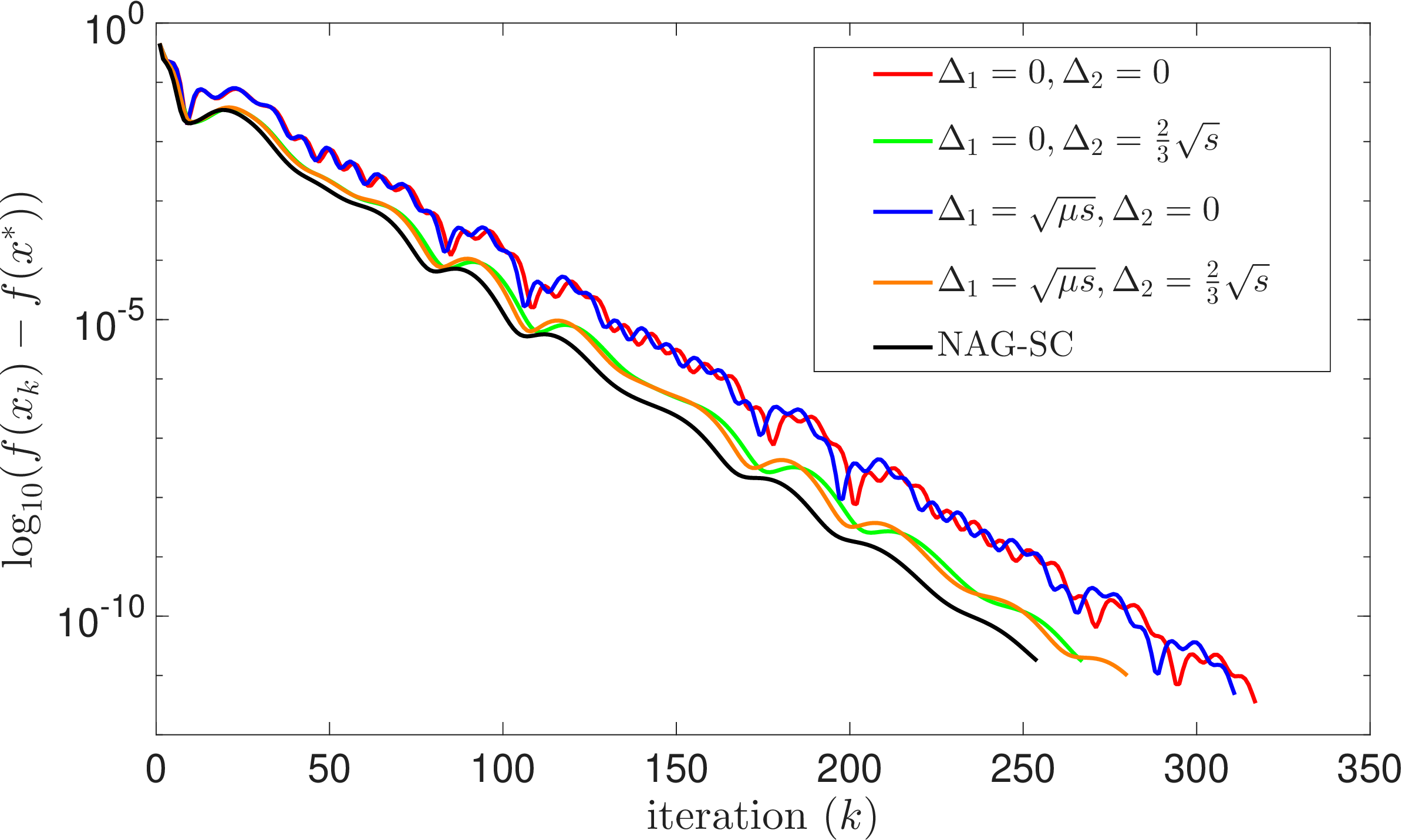}}
    \caption{Numerical comparisons of scheme \eqref{eq: iteration of direct symplectic discretization} with different $(\widehat{\Delta}_1, \widehat{\Delta}_2)$ on solving $\ell_2$-regularized logistic regression \eqref{eq: logistic regression} with dataset {\bf CINA}.}
    \label{fig:cina}
\end{figure}

\begin{figure}
    \centering
    \subfigure[$\hat{\Delta}_1=\sqrt{\mu s}$, $\hat{\Delta}_2=\sqrt{s}$]{
    \label{pic: numerical results of xk of direct symplectic discretization2 data1}
    \includegraphics[width=0.49\linewidth]{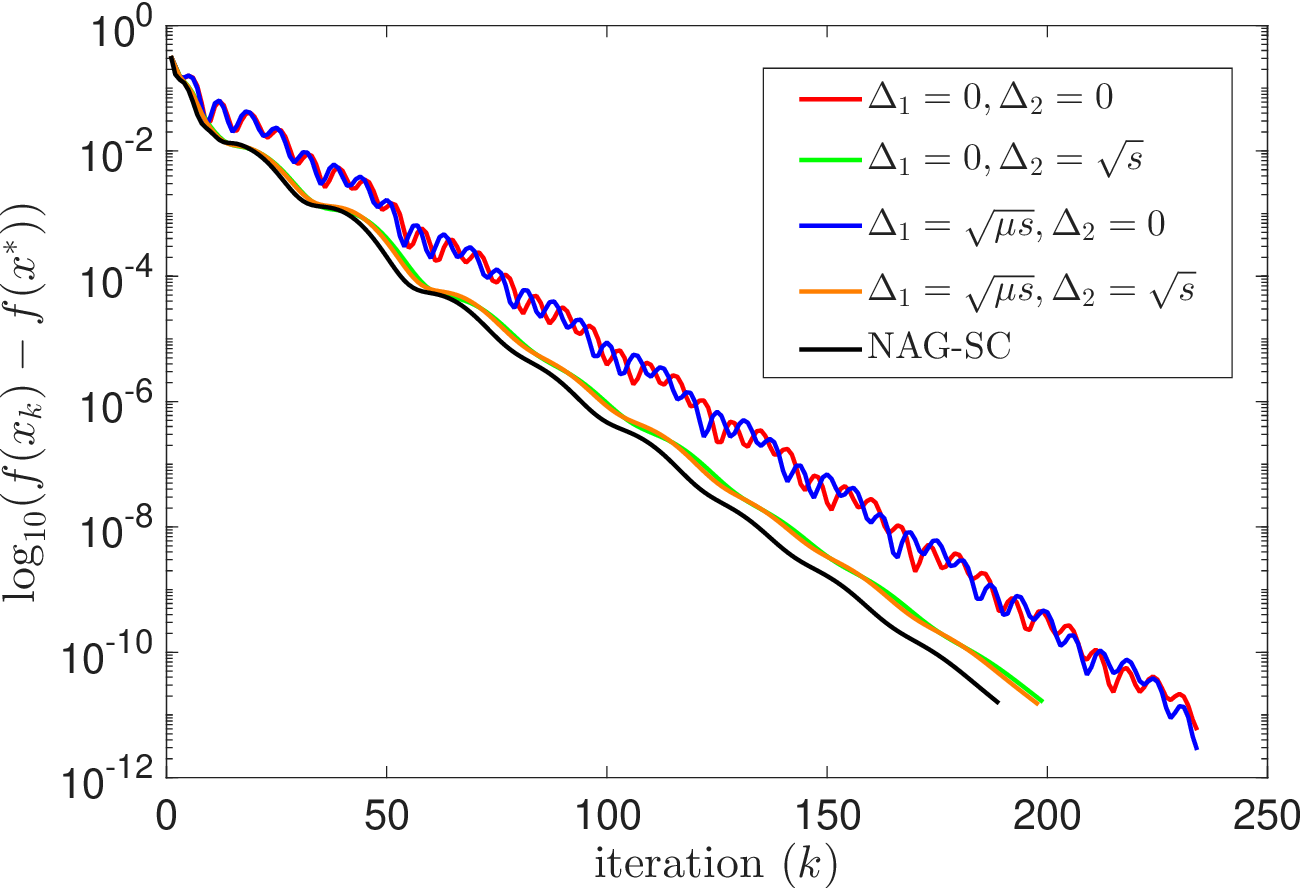}}
    \subfigure[$\hat{\Delta}_1=1$, $\hat{\Delta}_2=\sqrt{s}$]{
    \label{pic: numerical results of xk of direct symplectic discretization2 data1 delta1}
    \includegraphics[width=0.49\linewidth]{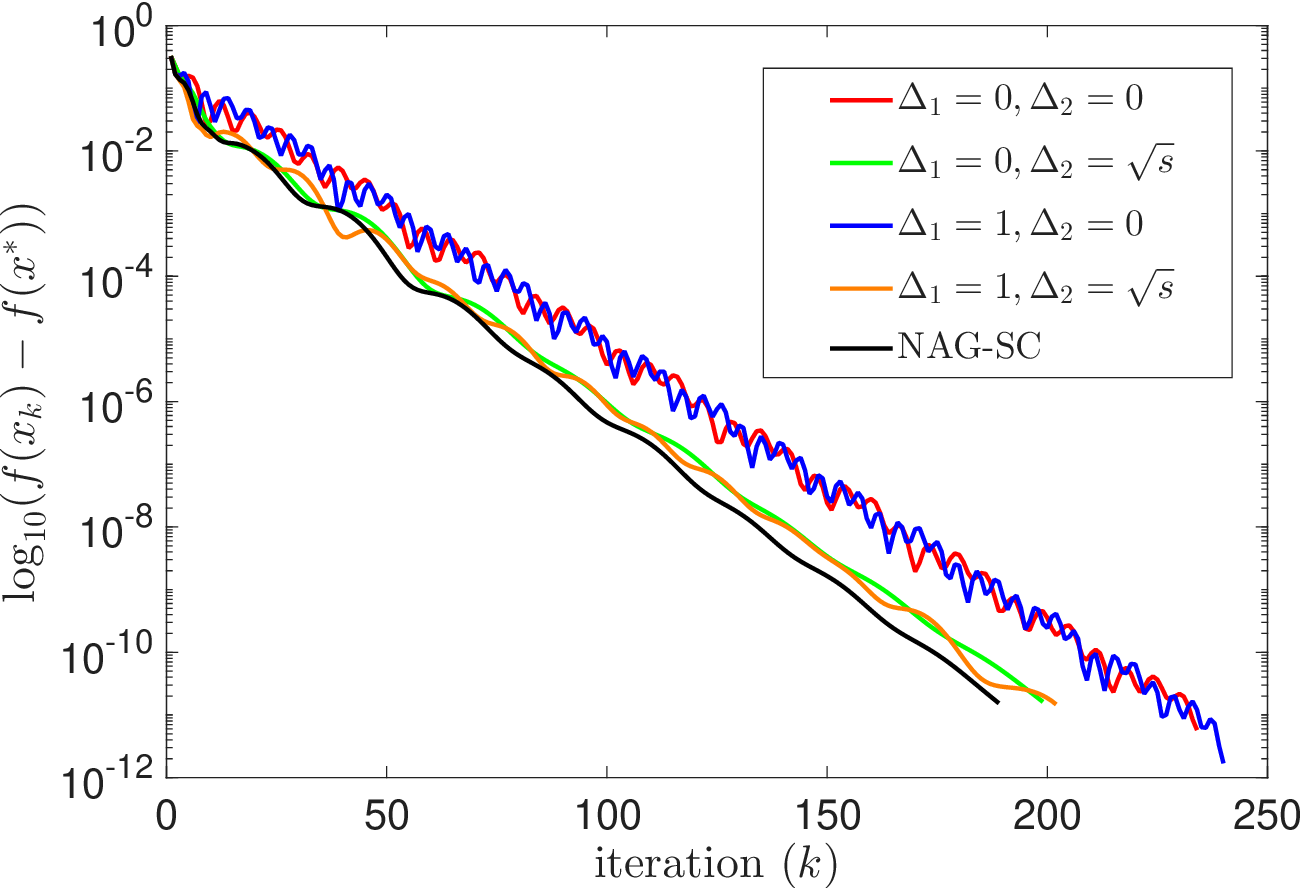}}
    \subfigure[$\hat{\Delta}_1=\sqrt{\mu s}$, $\hat{\Delta}_2=2\sqrt{s}/3$]{
    \label{pic: numerical results of xk of direct symplectic discretization2 data1 delta2}
    \includegraphics[width=0.49\linewidth]{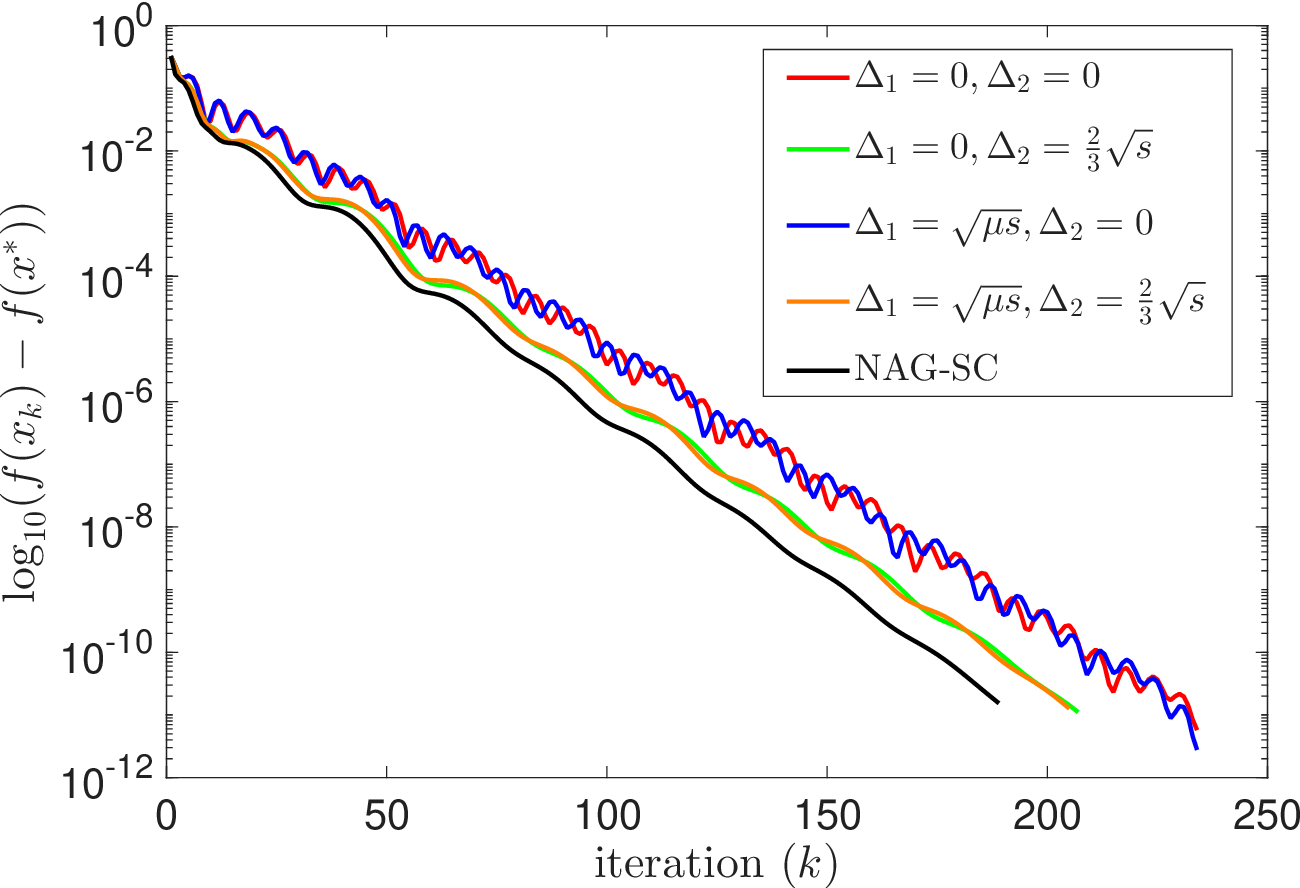}}

    \caption{Numerical comparisons of scheme \eqref{eq: iteration of direct symplectic discretization} with different $(\widehat{\Delta}_1, \widehat{\Delta}_2)$ on solving $\ell_2$-regularized logistic regression \eqref{eq: logistic regression} with dataset {\bf a9a}.}
\label{fig:a9a}
\end{figure}

\begin{figure}
    \centering
    \subfigure[$\hat{\Delta}_1=\sqrt{\mu s}$, $\hat{\Delta}_2=\sqrt{s}$]{ 
    \label{pic: numerical results of xk of direct symplectic discretization2 data2}
    \includegraphics[width=0.49\linewidth]{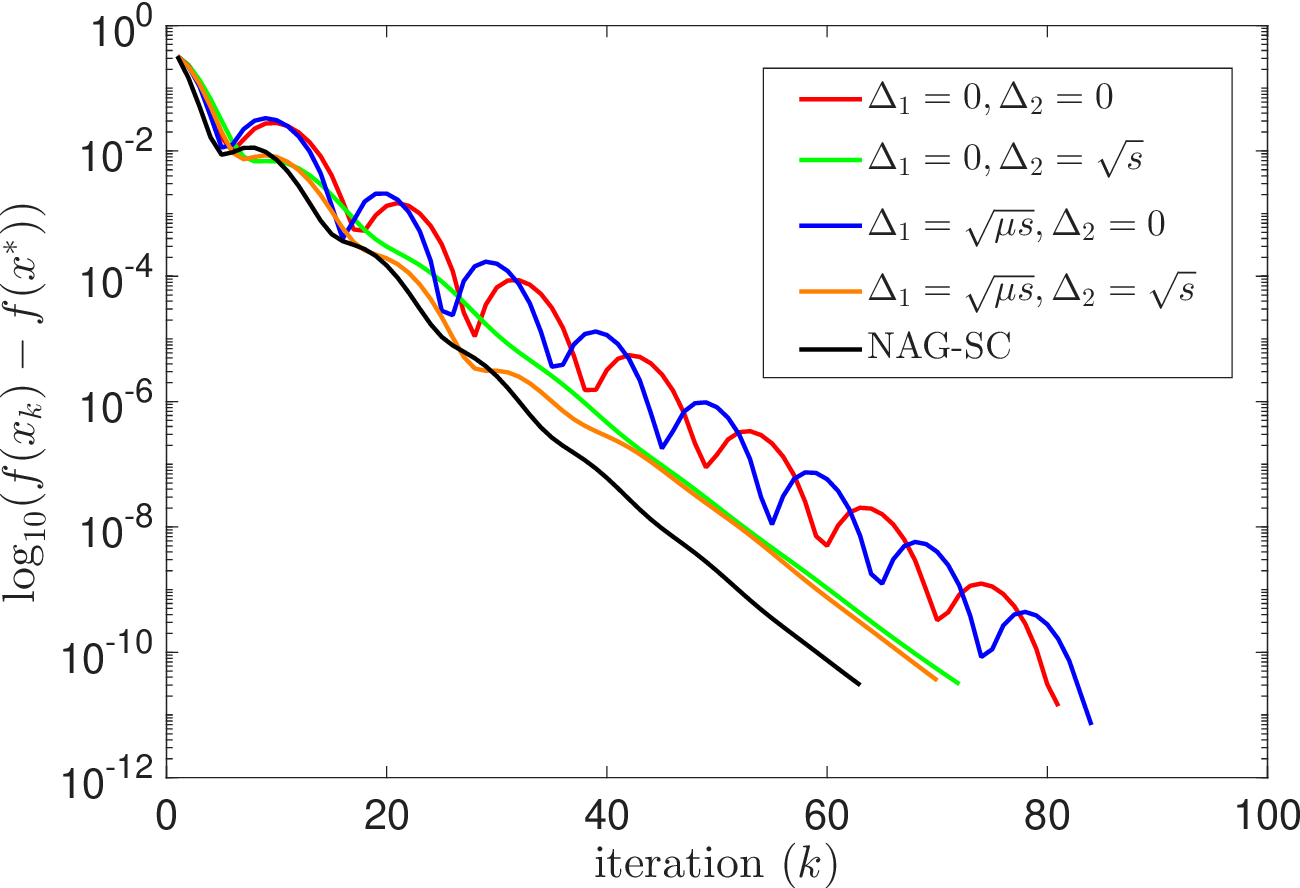}}
    \subfigure[$\hat{\Delta}_1=1$, $\hat{\Delta}_2=\sqrt{s}$]{ 
    \label{pic: numerical results of xk of direct symplectic discretization2 data2 delta1}
    \includegraphics[width=0.49\linewidth]{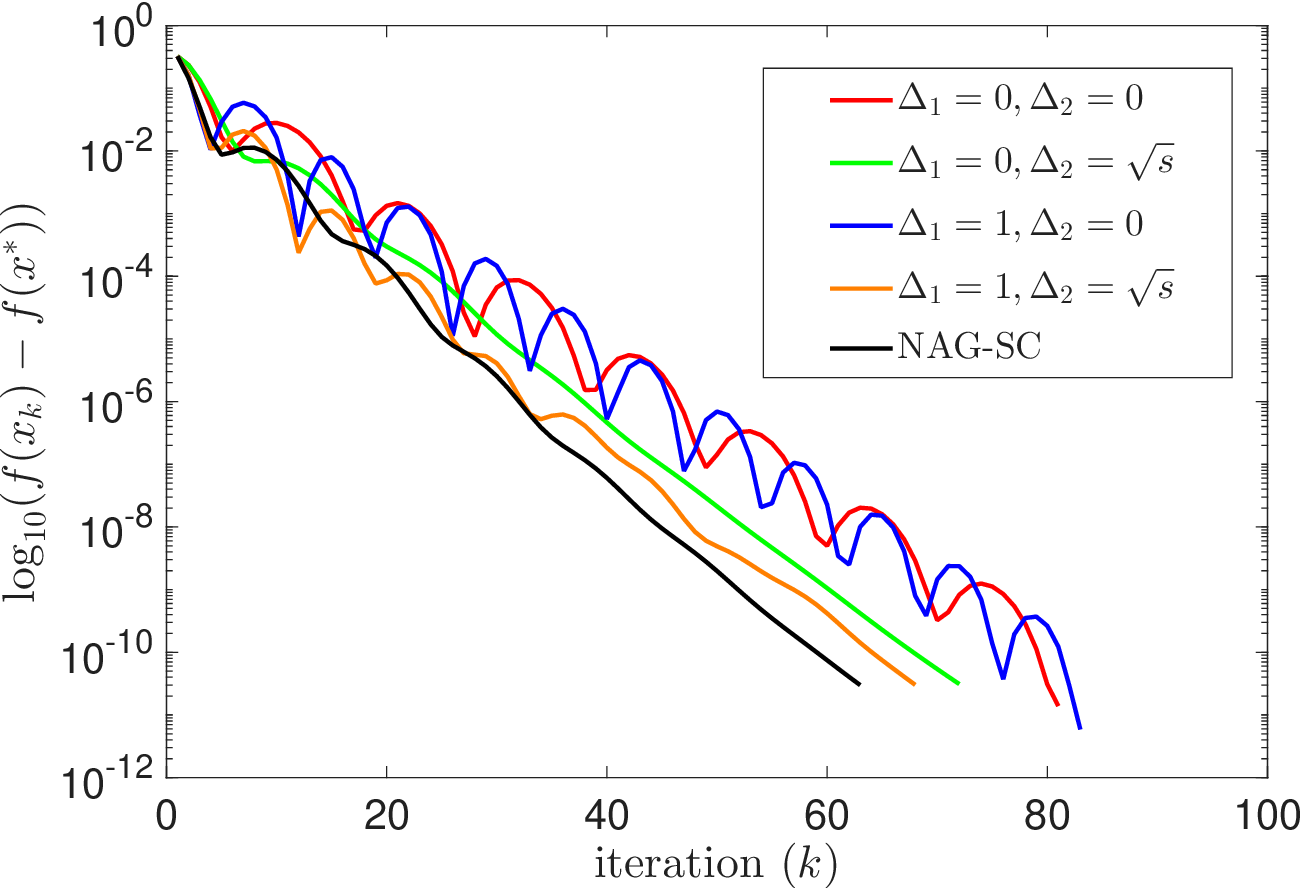}}
    \subfigure[$\hat{\Delta}_1=\sqrt{\mu s}$, $\hat{\Delta}_2=2\sqrt{s}/3$]{ 
    \label{pic: numerical results of xk of direct symplectic discretization2 data2 delta2}
    \includegraphics[width=0.49\linewidth]{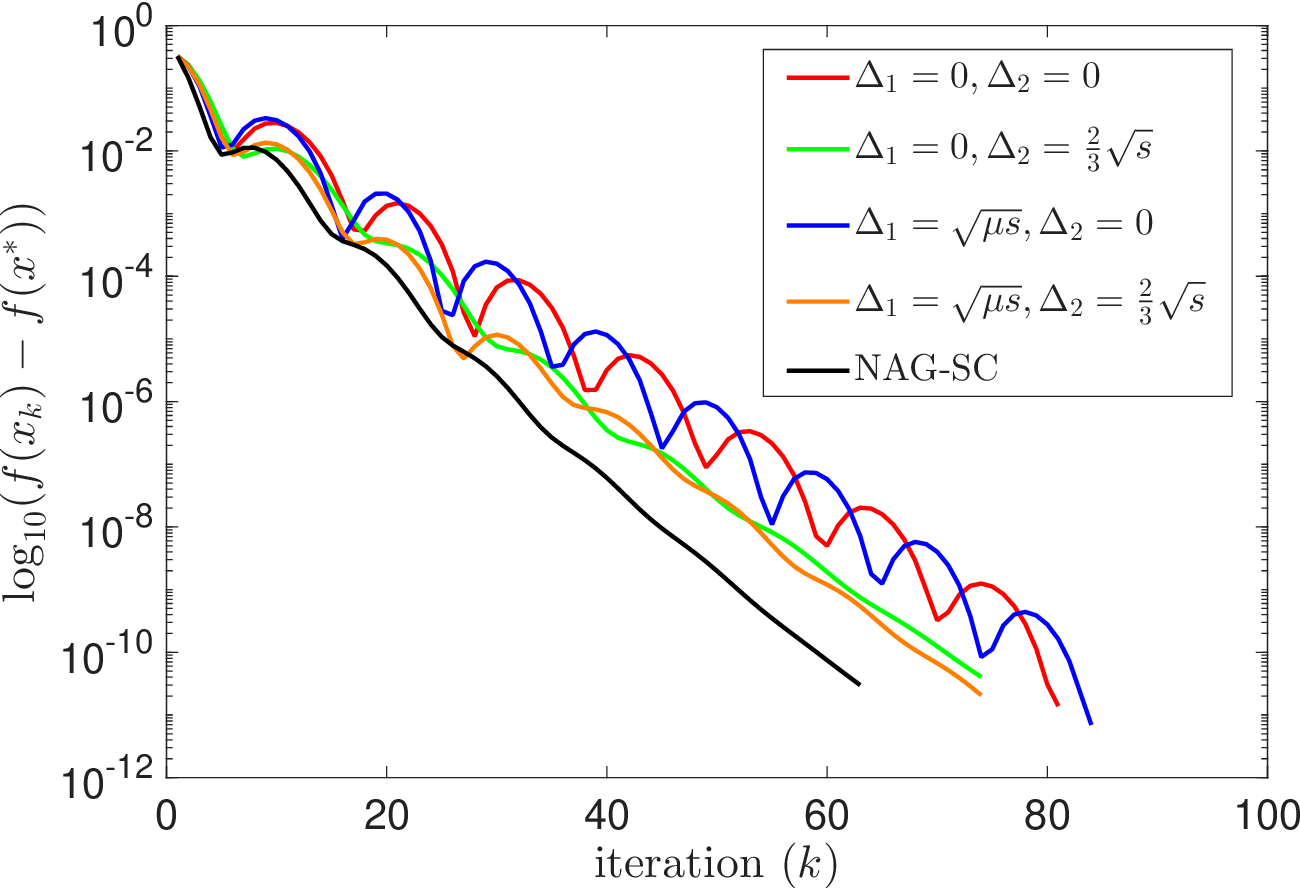}}
    \caption{Numerical comparisons of scheme \eqref{eq: iteration of direct symplectic discretization} with different $(\widehat{\Delta}_1, \widehat{\Delta}_2)$ on solving $\ell_2$-regularized logistic regression \eqref{eq: logistic regression} with dataset {\bf ijcnn1}.}
\label{fig:ijcnn1}
\end{figure}

\section{Conclusion}
\label{sec: discussion}
In this paper, to better understand the role of the gradient-correction term in accelerated algorithms, we investigate a perturbed version of the low-resolution ODE \eqref{low resolution for strong convex}. We derive appropriate choices for the perturbation parameters to ensure acceleration in both the continuous-time trajectory and its implicit and symplectic Euler discretizations. Additionally, we analyze the effects of the gradient perturbation and gradient-correction perturbation terms in detail. In particular, we show that while the gradient-correction perturbation is crucial for reducing oscillations and enabling acceleration, it may also hinder the convergence rate in certain cases. Interestingly, despite introducing oscillations, the gradient perturbation can counteract the adverse effects of the gradient-correction perturbation. As a promising direction for future work, extending our analysis to general convex composite optimization problems is highly desirable.

\section*{Data Availability}

The code and data set are available at \url{https://github.com/smq1918/perturbation-acceleration-ode}.

\section*{Declarations}

The authors declare that they have no conflicts of interest.

\bibliographystyle{siam}
\bibliography{reference}

\newpage
\appendix
\section{Analysis for the modified symplectic Euler discretization \eqref{eq: iteration of modified symplectic discretization}} \label{sec:appdenx_modified}
Similar to \eqref{Lyapunov for symplectic 2}, associated with \eqref{eq: iteration of modified symplectic discretization}, we define the following Lyapunov function $E(k)$ and obtain rate of convergence result in Theorem \ref{th:modified symplectic 2 slow}:
\begin{equation}
	\label{Lyapunov for symplectic 2 slow}
	\begin{aligned}
		E(k)=&(1+\sqrt{\mu s})^k\big(\frac{1+\Delta_1}{1-\sqrt{\mu s}}\big(f(x_k)-f(x^*)-\frac{\Delta_2\sqrt{s}}{2(1-\sqrt{\mu s})}\|\nabla f(x_k)\|^2\big)\\
		&+\frac{1}{2}\|v_k+\frac{\sqrt{\mu}}{1-\sqrt{\mu s}}(x_{k+1}-x^*)+\frac{\Delta_2}{1-\sqrt{\mu s}}\nabla f(x_k)\|^2\big).
	\end{aligned}
\end{equation}
\begin{THM}
	\label{th:modified symplectic 2 slow}
	Suppose that $f$ is $\mu$-strongly convex and $L$-smooth. If the non-negative perturbation
	parameters $\Delta_1, \Delta_2$ and step size $s>0$ satisfy the conditions:
	\begin{equation}
		\label{conditions of modified symplectic 2 slow}
		\begin{aligned}
			&\, (1)\frac{\Delta_2\sqrt{s}}{1-\sqrt{\mu s}}\leqslant \frac{1}{L};\\
			&\, (2)\frac{\sqrt{s}}{2}(1+\Delta_1)\leqslant \Delta_2\leqslant \sqrt{s}(1+\Delta_1);\\
			&\, (3) 1+\Delta_1 \geqslant \frac{1}{1-\sqrt{\mu s}},
		\end{aligned}
	\end{equation}
	then for any initial points $x_0$, $v_0$, the sequence $\{x^k\}$ generated by \eqref{eq: iteration of modified symplectic discretization} satisfies that
	\begin{equation*}
		f(x_k)-f(x^*)-\frac{\Delta_2\sqrt{s}}{2(1-\sqrt{\mu s})}\|\nabla f(x_k)\|^2 \leqslant \frac{1-\sqrt{\mu s}}{1+\Delta_1}(1+\sqrt{\mu s})^{-k}E(0).
	\end{equation*}
	Thus, if in addition $\Delta_2\sqrt{s}<(1-\sqrt{\mu s})/L$, then
	\begin{equation}
		f(x_k)-f(x^*) \leqslant \Big(1-\frac{\Delta_2\sqrt{s}L}{1-\sqrt{\mu s}}\Big)^{-1}\frac{1-\sqrt{\mu s}}{1+\Delta_1}(1+\sqrt{\mu s})^{-k}E(0).
	\end{equation}
\end{THM}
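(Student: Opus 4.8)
The plan is to mimic the structure of the proof of Theorem \ref{th:symplectic 2}, since the modified scheme \eqref{strong disturb symplectic 2 iteration} differs only in the discretization of the damping term ($-\sqrt{\mu}(v_{k+1}+v_k)$ versus $-2\sqrt{\mu}v_{k+1}$), which produces the shifted coefficients $1/(1-\sqrt{\mu s})$ appearing in the Lyapunov function \eqref{Lyapunov for symplectic 2 slow}. First I would show that $E(k)$ is nonincreasing. Writing $r = (1+\sqrt{\mu s})$, I would compute $r^{-k}(E(k+1)-E(k))$ and split it, as before, into the function-value differences plus $M_1^k + M_2^k$, where $M_1^k$ is the difference of the two squared-norm terms $\tfrac12\|v_k + \tfrac{\sqrt{\mu}}{1-\sqrt{\mu s}}(x_{k+1}-x^*) + \tfrac{\Delta_2}{1-\sqrt{\mu s}}\nabla f(x_k)\|^2$ at consecutive indices, and $M_2^k$ collects the contribution of the multiplicative factor $\sqrt{\mu s}$.

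The key algebraic step is to use \eqref{strong disturb symplectic 2 iteration} to express the increment of the vector inside the norm. From the second equation, $(1+\sqrt{\mu s})v_{k+1} = (1-\sqrt{\mu s})v_k - (1+\Delta_1)\sqrt{s}\nabla f(x_{k+1}) - \Delta_2(\nabla f(x_{k+1})-\nabla f(x_k))$, and from the first, $x_{k+2}-x_{k+1} = \sqrt{s}v_{k+1}$. Substituting these (and $\sqrt{s}v_k = x_{k+1}-x_k$) into $M_1^k$ and $M_2^k$ and expanding the inner products, I expect, after collecting terms, an expression of the form: a negative multiple of $\|v_{k+1}\|^2$, a positive multiple of $\|x_{k+1}-x^*\|^2$ (which will be absorbed by strong convexity), cross terms $\langle x_{k+1}-x^*,\nabla f(x_{k+1})\rangle$ and $\langle x_{k+1}-x_k,\nabla f(x_{k+1})\rangle$, and $\|\nabla f(x_{k+1})\|^2$, $\|\nabla f(x_{k+1})-\nabla f(x_k)\|^2$ terms with coefficients depending on $\Delta_1,\Delta_2,s,\mu$. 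Then I would invoke the same four inequalities from $\mu$-strong convexity and $L$-smoothness used in Theorem \ref{th:symplectic 2} (the $L$-smoothness descent-type bounds $f(x_{k+1}) + \langle\nabla f(x_{k+1}),x_k-x_{k+1}\rangle + \tfrac{1}{2L}\|\nabla f(x_{k+1})-\nabla f(x_k)\|^2 \le f(x_k)$, etc., and co-coercivity $\langle x_{k+1}-x^*,\nabla f(x_{k+1})\rangle \ge \tfrac1L\|\nabla f(x_{k+1})\|^2$) to bound $r^{-k}(E(k+1)-E(k))$ by a negative combination of $\|v_{k+1}\|^2$, $\|\nabla f(x_{k+1})-\nabla f(x_k)\|^2$ (nonpositive by condition (1)), and $\|\nabla f(x_{k+1})\|^2$, whose coefficient is made nonpositive by conditions (2) and (3). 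This yields $E(k+1)\le E(k)$, hence $E(k)\le E(0)$, and unfolding the definition \eqref{Lyapunov for symplectic 2 slow} and dropping the nonnegative squared-norm term gives the first claimed bound. The final inequality then follows exactly as in Theorem \ref{th:symplectic 2}: since $\|\nabla f(x_k)\|^2 \le 2L(f(x_k)-f(x^*))$ by $L$-smoothness and optimality of $x^*$, one rearranges $f(x_k)-f(x^*) \le (1 - \tfrac{\Delta_2\sqrt{s}L}{1-\sqrt{\mu s}})^{-1}(f(x_k)-f(x^*) - \tfrac{\Delta_2\sqrt{s}}{2(1-\sqrt{\mu s})}\|\nabla f(x_k)\|^2)$, valid when $\Delta_2\sqrt{s} < (1-\sqrt{\mu s})/L$.

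The main obstacle I anticipate is the bookkeeping in verifying that conditions (1)--(3) in \eqref{conditions of modified symplectic 2 slow} are exactly what is needed to force the coefficient of $\|\nabla f(x_{k+1})\|^2$ (after applying co-coercivity and the strong-convexity inequalities with the $1/(1-\sqrt{\mu s})$-weighted terms) to be nonpositive; the extra third condition $1+\Delta_1 \ge 1/(1-\sqrt{\mu s})$ has no analogue in Theorem \ref{th:symplectic 2} and presumably arises precisely to control a term generated by the $(1-\sqrt{\mu s})$ rescaling of the momentum variable. Tracking the $(1-\sqrt{\mu s})$ factors correctly through the expansion of $M_1^k+M_2^k$ — in particular ensuring the $\langle v_{k+1},\nabla f(x_{k+1})\rangle$ cross term is eliminated using the scheme and re-expressed via $\langle x_{k+1}-x_k,\nabla f(x_{k+1})\rangle$ and a $\|\nabla f(x_{k+1})-\nabla f(x_k)\|^2$ term — is the delicate part; everything else is a routine adaptation of the preceding proof.
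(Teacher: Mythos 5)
Your proposal follows essentially the same approach as the paper's proof: decompose $(1+\sqrt{\mu s})^{-k}(E(k+1)-E(k))$ into function-value differences plus $M_1^k+M_2^k$, substitute the modified scheme relations $(1+\sqrt{\mu s})v_{k+1}=(1-\sqrt{\mu s})v_k-(1+\Delta_1)\sqrt{s}\nabla f(x_{k+1})-\Delta_2(\nabla f(x_{k+1})-\nabla f(x_k))$, $x_{k+2}-x_{k+1}=\sqrt{s}v_{k+1}$, $\sqrt{s}v_k=x_{k+1}-x_k$, expand, and apply the strong-convexity/$L$-smoothness inequalities to drive $E(k+1)\le E(k)$, then unpack the definition of $E(k)$ for the two bounds. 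One small inaccuracy: the paper's proof of this theorem does \emph{not} use the co-coercivity inequality $\langle x_{k+1}-x^*,\nabla f(x_{k+1})\rangle\ge\frac1L\|\nabla f(x_{k+1})\|^2$ (unlike Theorem \ref{th:symplectic 2}); after the expansion, the $\langle x_{k+1}-x^*,\nabla f(x_{k+1})\rangle$ cross term appears with a negative coefficient because of condition (2), and is discarded using only $\langle x_{k+1}-x^*,\nabla f(x_{k+1})\rangle\ge 0$, while the $\|\nabla f(x_{k+1})\|^2$ coefficient factorizes as $-\frac{(1+\Delta_1)\sqrt{s}}{(1-\sqrt{\mu s})^2}\big(\Delta_2-\frac12\sqrt{s}(1+\Delta_1)\big)-\frac{\sqrt{\mu s}\Delta_2}{2(1-\sqrt{\mu s})^2}\big(\sqrt{s}(1+\Delta_1)-\Delta_2\big)$ and is nonpositive from condition (2) alone. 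Your reading of condition (3) is correct — it is there to absorb the $\|x_{k+1}-x^*\|^2$ term, whose coefficient $\frac{\mu\sqrt{\mu s}}{2(1-\sqrt{\mu s})}\big(\frac{1}{1-\sqrt{\mu s}}-(1+\Delta_1)\big)$ is sign-controlled by $1+\Delta_1\ge\frac{1}{1-\sqrt{\mu s}}$, a term with no analogue in Theorem \ref{th:symplectic 2}.
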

\begin{proof}
    {The proof here is similar to the one for Theorem \ref{th:symplectic 2}. We start by showing that $E(k)$ is nonincreasing. 
        \begin{equation}
        \label{eq: diffEk in modified symplectic discretization}
        \begin{aligned}
        &(1+\sqrt{\mu s})^{-k}\big(E(k+1)-E(k)\big)\\
        =&\frac{1+\Delta_1}{1-\sqrt{\mu s}}\big(f(x_{k+1})-f(x_k)\big)+\sqrt{\mu s}\frac{1+\Delta_1}{1-\sqrt{\mu s}}\big(f(x_{k+1})-f(x^*)\big)\\
		&-(1+\sqrt{\mu s})\frac{(1+\Delta_1)\Delta_2\sqrt{s}}{2(1-\sqrt{\mu s})^2}\|\nabla f(x_{k+1})\|^2+\frac{(1+\Delta_1)\Delta_2\sqrt{s}}{2(1-\sqrt{\mu s})^2}\|\nabla f(x_k)\|^2\\
        &+M_1^k+M_2^k,
        \end{aligned}
        \end{equation}
        where 
        \begin{equation*}
        \begin{aligned}
            M_1^k={}&\frac{1}{2}\|v_{k+1}+\frac{\sqrt{\mu}}{1-\sqrt{\mu s}}(x_{k+2} -x^*)+\frac{\Delta_2}{1-\sqrt{\mu s}}\nabla f(x_{k+1})\|^2 \\
            {}&-\frac{1}{2}\|v_k+\frac{\sqrt{\mu}}{1-\sqrt{\mu s}}(x_{k+1}-x^*)+\frac{\Delta_2}{1-\sqrt{\mu s}}\nabla f(x_k)\|^2,
        \end{aligned}
        \end{equation*}
        and 
        \begin{equation*}
            M_2^k=\frac{\sqrt{\mu s}}{2}\|v_{k+1}+\frac{\sqrt{\mu}}{1-\sqrt{\mu s}}(x_{k+2}-x^*)+\frac{\Delta_2}{1-\sqrt{\mu s}}\nabla f(x_{k+1})\|^2.
        \end{equation*}
        Similar to the proof for Theorem \ref{th:symplectic 2}, 
        by utilizing \eqref{strong disturb symplectic 2 iteration} and $x_{k+2}=x_{k+1}+\sqrt{s}v_{k+1}$, we can get
        \begin{align*}
        M_1^k+M_2^k = 
        &-\frac{\sqrt{\mu s}(1+\sqrt{\mu s})}{2(1-\sqrt{\mu s})^2}\|v_{k+1}\|^2
        -\frac{(1+\Delta_1)\sqrt{s}}{(1-\sqrt{\mu s})^2}(1+\sqrt{\mu s})\langle v_{k+1},\nabla f(x_{k+1})\rangle\\
        &-\Big\{\frac{\sqrt{\mu s}}{1-\sqrt{\mu s}}(1+\Delta_1)+\frac{\mu\sqrt{s}}{(1-\sqrt{\mu s})^2}\big[(1+\Delta_1)\sqrt{s}-\Delta_2\big]\Big\}\langle x_{k+1}-x^*,\nabla f(x_{k+1})\rangle\\
        &+\big[-\frac{(1+\Delta_1)\Delta_2\sqrt{s}}{(1-\sqrt{\mu s})^2}-\frac{(1+\Delta_1)^2}{2(1-\sqrt{\mu s})^2}s+\frac{\sqrt{\mu s}\Delta_2^2}{2(1-\sqrt{\mu s})^2}\big]\|\nabla f(x_{k+1})\|^2\\
        &+\frac{\sqrt{\mu s}}{2}\frac{\mu}{(1-\sqrt{\mu s})^2}\|x_{k+1}-x^*\|^2.
        \end{align*}
        Since
        \begin{equation*}
        \left\{
        \begin{aligned}
        & (1+\sqrt{\mu s})v_{k+1}=(1-\sqrt{\mu s})v_k-(1+\Delta_1)\sqrt{s}\nabla f(x_{k+1})-\Delta_2\big(\nabla f(x_{k+1})-\nabla f(x_k)\big),\\
        & \sqrt{s}v_k=x_{k+1}-x_k,
        \end{aligned}
        \right. 
        \end{equation*}
        and using
        \begin{equation*}
	\langle \nabla     
        f(x_{k+1}),\nabla f(x_{k+1})-\nabla f(x_k)\rangle=\frac{1}{2}\|\nabla f(x_{k+1})-\nabla f(x_k)\|^2+\frac{1}{2}\|\nabla f(x_{k+1})\|^2-\frac{1}{2}\|\nabla f(x_k)\|^2,
	\end{equation*}
        we have
        \begin{align*}
            &\frac{(1+\Delta_1)\sqrt{s}}{(1-\sqrt{\mu s})^2}(1+\sqrt{\mu s})\langle v_{k+1}, \nabla f(x_{k+1}) \rangle\\
            =&\frac{1+\Delta_1}{1-\sqrt{\mu s}}\langle x_{k+1}-x_k,\nabla f(x_{k+1}) \rangle - \frac{(1+\Delta_1)^2s}{(1-\sqrt{\mu s})^2}\|\nabla f(x_{k+1})\|^2\\
            &-\frac{(1+\Delta_1)\Delta_2\sqrt{s}}{2(1-\sqrt{\mu s})^2}\big(\|\nabla f(x_{k+1})-\nabla f(x_k)\|^2+\|\nabla f(x_{k+1})\|^2-\|\nabla f(x_k)\|^2\big).
        \end{align*}
        Then, it holds that  
        \begin{equation}
        \label{eq: M1M2 in modified symplectic discretization}
        \begin{aligned}
                M_1^k+M_2^k=&-\frac{\sqrt{\mu s}(1+\sqrt{\mu s})}{2(1-\sqrt{\mu s})^2}\|v_{k+1}\|^2
        -\frac{1+\Delta_1}{1-\sqrt{\mu s}}\langle x_{k+1}-x_k,\nabla f(x_{k+1})\rangle\\
        &+\frac{(1+\Delta_1)\Delta_2\sqrt{s}}{2(1-\sqrt{\mu s})^2}\big(\|\nabla f(x_{k+1})-\nabla f(x_k)\|^2+\|\nabla f(x_{k+1})\|^2-\|\nabla f(x_k)\|^2\big)\\
        &-\Big\{\frac{\sqrt{\mu s}}{1-\sqrt{\mu s}}(1+\Delta_1)+\frac{\mu\sqrt{s}}{(1-\sqrt{\mu s})^2}\big[(1+\Delta_1)\sqrt{s}-\Delta_2\big]\Big\}\langle x_{k+1}-x^*,\nabla f(x_{k+1})\rangle\\
        &+\big[-\frac{(1+\Delta_1)\Delta_2\sqrt{s}}{(1-\sqrt{\mu s})^2}+\frac{(1+\Delta_1)^2}{2(1-\sqrt{\mu s})^2}s+\frac{\sqrt{\mu s}\Delta_2^2}{2(1-\sqrt{\mu s})^2}\big]\|\nabla f(x_{k+1})\|^2\\
        &+\frac{\sqrt{\mu s}}{2}\frac{\mu}{(1-\sqrt{\mu s})^2}\|x_{k+1}-x^*\|^2.
        \end{aligned}
       \end{equation}
       By substituting \eqref{eq: M1M2 in modified symplectic discretization} into \eqref{eq: diffEk in modified symplectic discretization}, we see that
       \begin{align*}
           &(1+\sqrt{\mu s})^{-k}\big(E(k+1)-E(k)\big)\\
           =&\frac{1+\Delta_1}{1-\sqrt{\mu s}}\big(f(x_{k+1})-f(x_k)\big)+\sqrt{\mu s}\frac{1+\Delta_1}{1-\sqrt{\mu s}}\big(f(x_{k+1})-f(x^*)\big)\\
		&-(1+\sqrt{\mu s})\frac{(1+\Delta_1)\Delta_2\sqrt{s}}{2(1-\sqrt{\mu s})^2}\|\nabla f(x_{k+1})\|^2+\frac{(1+\Delta_1)\Delta_2\sqrt{s}}{2(1-\sqrt{\mu s})^2}\|\nabla f(x_k)\|^2\\
        &-\frac{\sqrt{\mu s}(1+\sqrt{\mu s})}{2(1-\sqrt{\mu s})^2}\|v_{k+1}\|^2
        -\frac{1+\Delta_1}{1-\sqrt{\mu s}}\langle x_{k+1}-x_k,\nabla f(x_{k+1})\rangle\\
        &+\frac{(1+\Delta_1)\Delta_2\sqrt{s}}{2(1-\sqrt{\mu s})^2}\big(\|\nabla f(x_{k+1})-\nabla f(x_k)\|^2+\|\nabla f(x_{k+1})\|^2-\|\nabla f(x_k)\|^2\big)\\
        &-\Big\{\frac{\sqrt{\mu s}}{1-\sqrt{\mu s}}(1+\Delta_1)+\frac{\mu\sqrt{s}}{(1-\sqrt{\mu s})^2}\big[(1+\Delta_1)\sqrt{s}-\Delta_2\big]\Big\}\langle x_{k+1}-x^*,\nabla f(x_{k+1})\rangle\\
        &+\big[-\frac{(1+\Delta_1)\Delta_2\sqrt{s}}{(1-\sqrt{\mu s})^2}+\frac{(1+\Delta_1)^2}{2(1-\sqrt{\mu s})^2}s+\frac{\sqrt{\mu s}\Delta_2^2}{2(1-\sqrt{\mu s})^2}\big]\|\nabla f(x_{k+1})\|^2\\
        &+\frac{\sqrt{\mu s}}{2}\frac{\mu}{(1-\sqrt{\mu s})^2}\|x_{k+1}-x^*\|^2\\
        =&\frac{1+\Delta_1}{1-\sqrt{\mu s}}\big(f(x_{k+1})-f(x_k)+\langle x_k-x_{k+1}, \nabla f(x_{k+1})\rangle + \frac{\Delta_2\sqrt{s}}{2(1-\sqrt{\mu s})}\|\nabla f(x_{k+1})-\nabla f(x_k)\|^2\big)\\
        &+\sqrt{\mu s}\frac{1+\Delta_1}{1-\sqrt{\mu s}}\big(f(x_{k+1})-f(x^*)+\langle x^*-x_{k+1}, \nabla f(x_{k+1})\big)+\frac{\mu}{2}\frac{\sqrt{\mu s}}{(1-\sqrt{\mu s})^2}\|x_{k+1}-x^*\|^2\\
        &-\frac{\sqrt{\mu s}(1+\sqrt{\mu s})}{2(1-\sqrt{\mu s})^2}\|v_{k+1}\|^2-\frac{\mu\sqrt{s}}{(1-\sqrt{\mu s})^2}\big[(1+\Delta_1)\sqrt{s}-\Delta_2\big]\langle x_{k+1}-x^*, \nabla f(x_{k+1})\rangle\\
        &+\big(-\frac{(1+\Delta_1)\Delta_2\sqrt{s}}{2(1-\sqrt{\mu s})^2}(2+\sqrt{\mu s})+\frac{(1+\Delta_1)^2}{2(1-\sqrt{\mu s})^2}s+\frac{\sqrt{\mu s}\Delta_2^2}{2(1-\sqrt{\mu s})^2}\big)\|\nabla f(x_{k+1})\|^2.
       \end{align*}
    }
    From the $\mu$-strong convexity and $L$-smoothness of $f$, we know 
	\begin{equation*}
		\left\{
		\begin{aligned}
			&f(x_{k+1})-f(x_k)+\langle \nabla f(x_{k+1}),x_k-x_{k+1}\rangle \leqslant -\frac{1}{2L}\|\nabla f(x_{k+1})-\nabla f(x_k)\|^2,\\
			&f(x_{k+1})-f(x^*)+\langle \nabla f(x_{k+1}),x^*-x_{k+1}\rangle\leqslant -\frac{\mu}{2}\|x_{k+1}-x^*\|^2,
		\end{aligned}
		\right.
	\end{equation*}
which further yields that
{
        \begin{align*}
            &(1+\sqrt{\mu s})^{-k}\big(E(k+1)-E(k)\big)\\
            \leqslant & \frac{1+\Delta_1}{2(1-\sqrt{\mu s})}\big(-\frac{1}{L}+\frac{\Delta_2\sqrt{s}}{1-\sqrt{\mu s}}\big)\|\nabla f(x_{k+1})-\nabla f(x_{k})\|^2\\
            &+\frac{\mu\sqrt{\mu s}}{2(1-\sqrt{\mu s})}\big(-(1+\Delta_1)+\frac{1}{1-\sqrt{\mu s}}\big)\|x_{k+1}-x^*\|^2\\
            &-\frac{\sqrt{\mu s}(1+\sqrt{\mu s})}{2(1-\sqrt{\mu s})^2}\|v_{k+1}\|^2-\frac{\mu\sqrt{s}}{(1-\sqrt{\mu s})^2}\big((1+\Delta_1)\sqrt{s}-\Delta_2\big)\langle x_{k+1}-x^*,\nabla f(x_{k+1})\rangle\\
			&+\big(-\frac{(1+\Delta_1)\Delta_2\sqrt{s}}{2(1-\sqrt{\mu s})^2}(2+\sqrt{\mu s})+\frac{(1+\Delta_1)^2}{2(1-\sqrt{\mu s})^2}s+\frac{\sqrt{\mu s}\Delta_2^2}{2(1-\sqrt{\mu s})^2}\big)\|\nabla f(x_{k+1})\|^2.
        \end{align*}
    }
Note that the coefficient before the last term $\|\nabla f(x_{k+1})\|^2$ can be rewritten as follows:
	\begin{align*}
		&-\frac{(1+\Delta_1)\Delta_2\sqrt{s}}{2(1-\sqrt{\mu s})^2}(2+\sqrt{\mu s})+\frac{(1+\Delta_1)^2}{2(1-\sqrt{\mu s})^2}s+\frac{\sqrt{\mu s}\Delta_2^2}{2(1-\sqrt{\mu s})^2}\\
		=&-\frac{(1+\Delta_1)\sqrt{s}}{(1-\sqrt{\mu s})^2}\big(\Delta_2-\frac{1}{2}\sqrt{s}(1+\Delta_1)\big)-\frac{\sqrt{\mu s}\Delta_2}{2(1-\sqrt{\mu s})^2}\big(\sqrt{s}(1+\Delta_1)-\Delta_2\big).
	\end{align*}
Then, from \ref{conditions of modified symplectic 2 slow}, we have 
	\begin{equation*}
		E(k+1)\leqslant E(k), \quad \forall\, k\ge 0,
	\end{equation*}   
    which, together with \eqref{Lyapunov for symplectic 2 slow}, implies that
	\begin{align*}
		f(x_k)-f(x^*)-\frac{\Delta_2\sqrt{s}}{2(1-\sqrt{\mu s})}\|\nabla f(x_k)\|^2 \leqslant \frac{1-\sqrt{\mu s}}{1+\Delta_1}(1+\sqrt{\mu s})^{-k}E(k)\leqslant \frac{1-\sqrt{\mu s}}{1+\Delta_1}(1+\sqrt{\mu s})^{-k}E(0).
	\end{align*}{
    Then, the $L$-smoothness of $f$ and the optimality of $x^*$ further yield that
     the following inequality holds if $\Delta_2\sqrt{s}L/(1-\sqrt{\mu s})<1$:
    \begin{align*}
        f(x_k)-f(x^*) \leqslant &\Big(1-\frac{\Delta_2\sqrt{s}L}{1-\sqrt{\mu s}}\Big)^{-1} \big( f(x_k)-f(x^*)-\frac{\Delta_2\sqrt{s}}{2(1-\sqrt{\mu s})}\|\nabla f(x_k)\|^2\big)\\
        \leqslant & \Big(1-\frac{\Delta_2\sqrt{s}L}{1-\sqrt{\mu s}}\Big)^{-1}\frac{1-\sqrt{\mu s}}{1+\Delta_1}(1+\sqrt{\mu s})^{-k}E(0).
    \end{align*}
    }
	This completes the proof of the theorem.
\end{proof}

Again, the roles of $\Delta_1$ and $\Delta_2$ in achieving the acceleration can be discussed. We omit the detailed discussions here, but note that the roles of $\Delta_1$ and $\Delta_2$ in this context are similar to those in the previously discussed scheme resulted from the symplectic discretization.
Similar to Algorithm \ref{algorithm:symplectic discretization 1}, we can set $s = 1/(4L)$, $\Delta_1 = {\sqrt{\mu}/(2\sqrt{L}-\sqrt{\mu})}$, choose $\Delta_2$ satisfying:
\begin{equation*}
	\frac{1}{2(2\sqrt{L}-\sqrt{\mu})}\leqslant \Delta_2 \leqslant \frac{1}{2\sqrt{L}-\sqrt{\mu}},
\end{equation*}
and obtaining a class of accelerated algorithms with the following convergence rate:
\[
f(x_k)-f(x^*)={\cal O}((1+1/2\sqrt{\mu/L})^{-k}).
\] 
The algorithm is summarized in Algorithm \ref{algorithm:symplectic discretization 2}.

\begin{algorithm}
	\caption{Optimization algorithm derived from the modified symplectic Euler discretization}
	\label{algorithm:symplectic discretization 2}
	\begin{algorithmic}[1]
		\State{Choose the initial point $x_0 \in \cH$,  $\Delta_1=\sqrt{\mu}/(2\sqrt{L}-\sqrt{\mu})$, and $\Delta_2$ satisfying $1/(4\sqrt{L}-2\sqrt{\mu})\leqslant \Delta_2\leqslant 1/(2\sqrt{L}-\sqrt{\mu})$, and set $x_1=x_0$.}
		\For{$k=1,2,\ldots$}
		\State{$x_{k+1}=x_k+\frac{2\sqrt{L}-\sqrt{\mu}}{2\sqrt{L}+\sqrt{\mu}}(x_k-x_{k-1})-\frac{1}{4L-\mu}\nabla f(x_{k})-\frac{\Delta_2}{2\sqrt{L}-\sqrt{\mu}}\big(\nabla f(x_{k})-\nabla f(x_{k-1})\big)$.}
		\EndFor
	\end{algorithmic}
\end{algorithm}

\section{Additional numerical experiments on quadratic programming}
\label{sec:appdenx_numerical experiment}
Below, we test our algorithms for solving the quadratic programming problem \eqref{eq: quadratic function for numerical experiments} with 
$A \in \mathbb{R}^{100\times 100}$ given by 
    \begin{equation}
        \label{eq: case matrix}
        A=Q {\rm Diag}\left(\left[\mu; \mu\left(\frac{L}{\mu}\right)^{\frac{1}{n-1}}; \ldots; \mu\left(\frac{L}{\mu}\right)^{\frac{n-2}{n-1}}; L\right]\right) Q^T,
    \end{equation}
    where $\mu=1$, $L=100$, $n=100$, and $Q$ is a randomly generated orthogonal matrix. Specifically, an $n \times n$ matrix is first generated with independent and identically distributed entries drawn from the standard normal distribution. The orthogonal matrix $Q$ is then obtained by performing a QR decomposition on this random matrix. In the tests, the initial point is chosen as
    \begin{equation*}
        x_0 = Q  [1;1;\cdots;1;1], \quad x_1 = x_0 - \frac{s(1+\Delta_1)}{1+2\sqrt{\mu s}} \nabla f(x_0).
    \end{equation*}
    The detailed results are presented in Figure \ref{pic:larger matrix}.  
    Compared with the results obtained from Figure \ref{pic: all numerical results of xk of direct symplectic discretization}, a similar conclusion can be drawn regarding the roles of  $\Delta_1$ and $\Delta_2$.
    
    \begin{figure}[htbp]
    \centering
    \subfigure[$\hat{\Delta}_1=\sqrt{\mu s}$, $\hat{\Delta}_2=\sqrt{s}$]{ 
    \centering
    \includegraphics[width=0.49\linewidth]{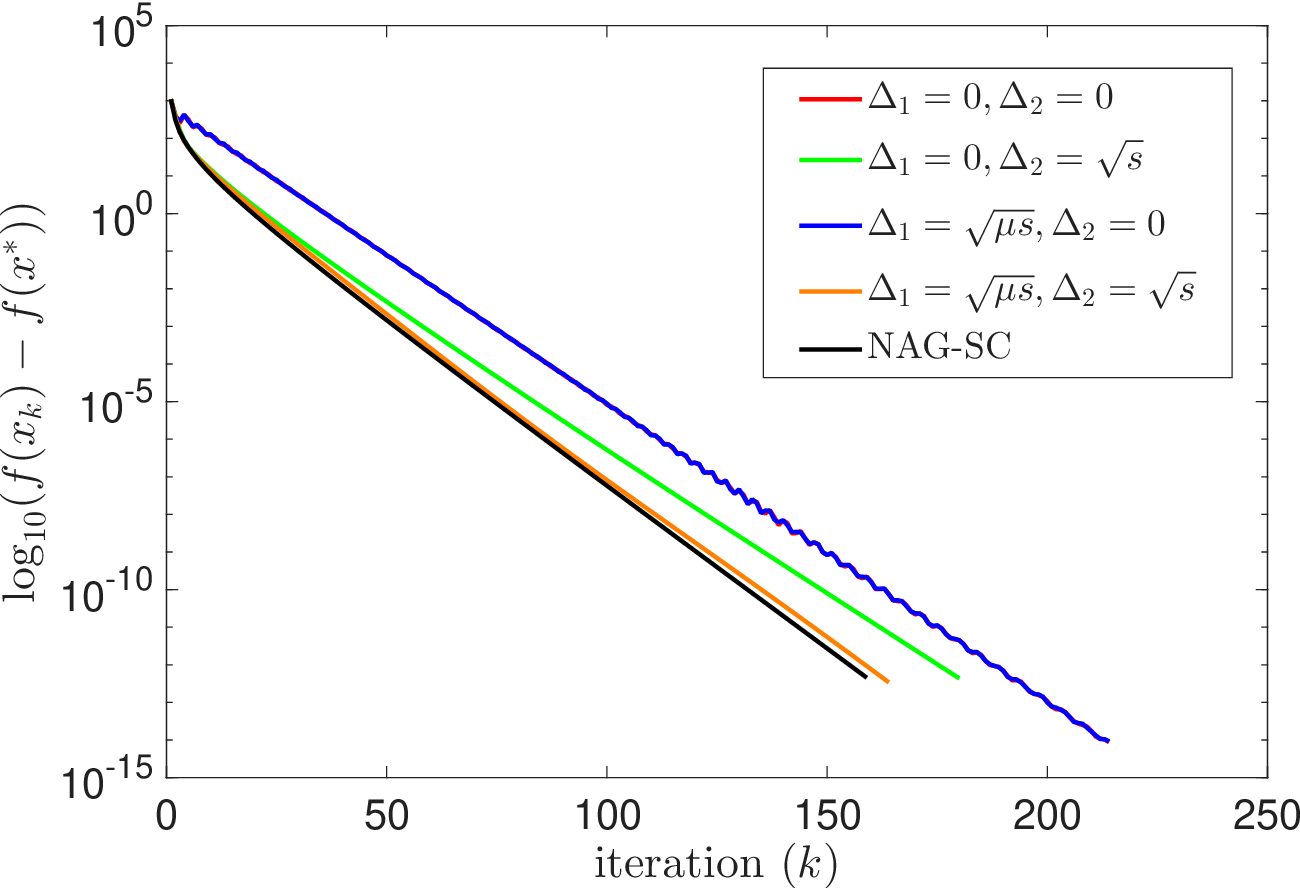}}
    \subfigure[$\hat{\Delta}_1=1$, $\hat{\Delta}_2=\sqrt{s}$]{
    \centering
    \includegraphics[width=0.49\linewidth]{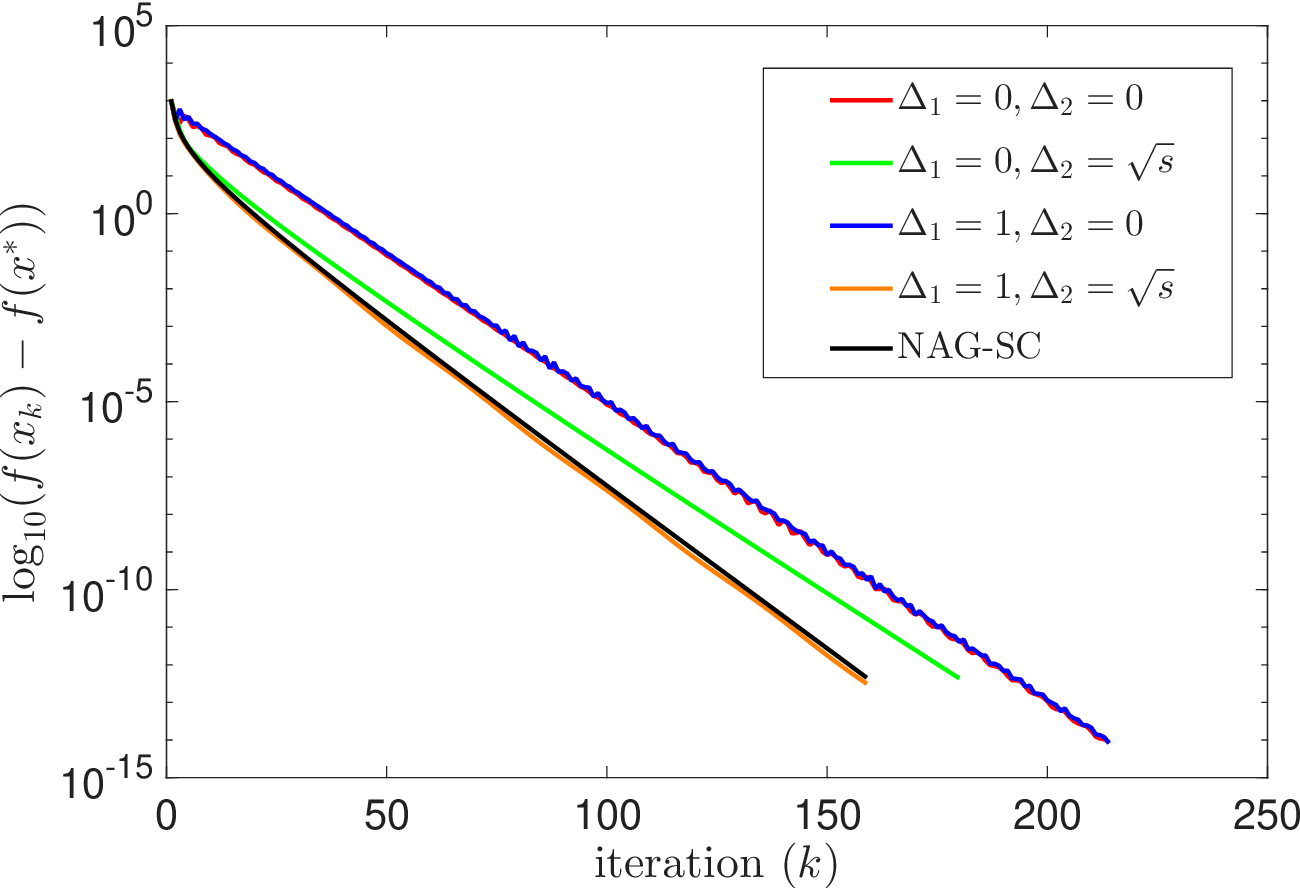}}
    \subfigure[$\hat{\Delta}_1=\sqrt{\mu s}$, $\hat{\Delta}_2=2\sqrt{s}/3$]{
    \centering
    \includegraphics[width=0.49\linewidth]{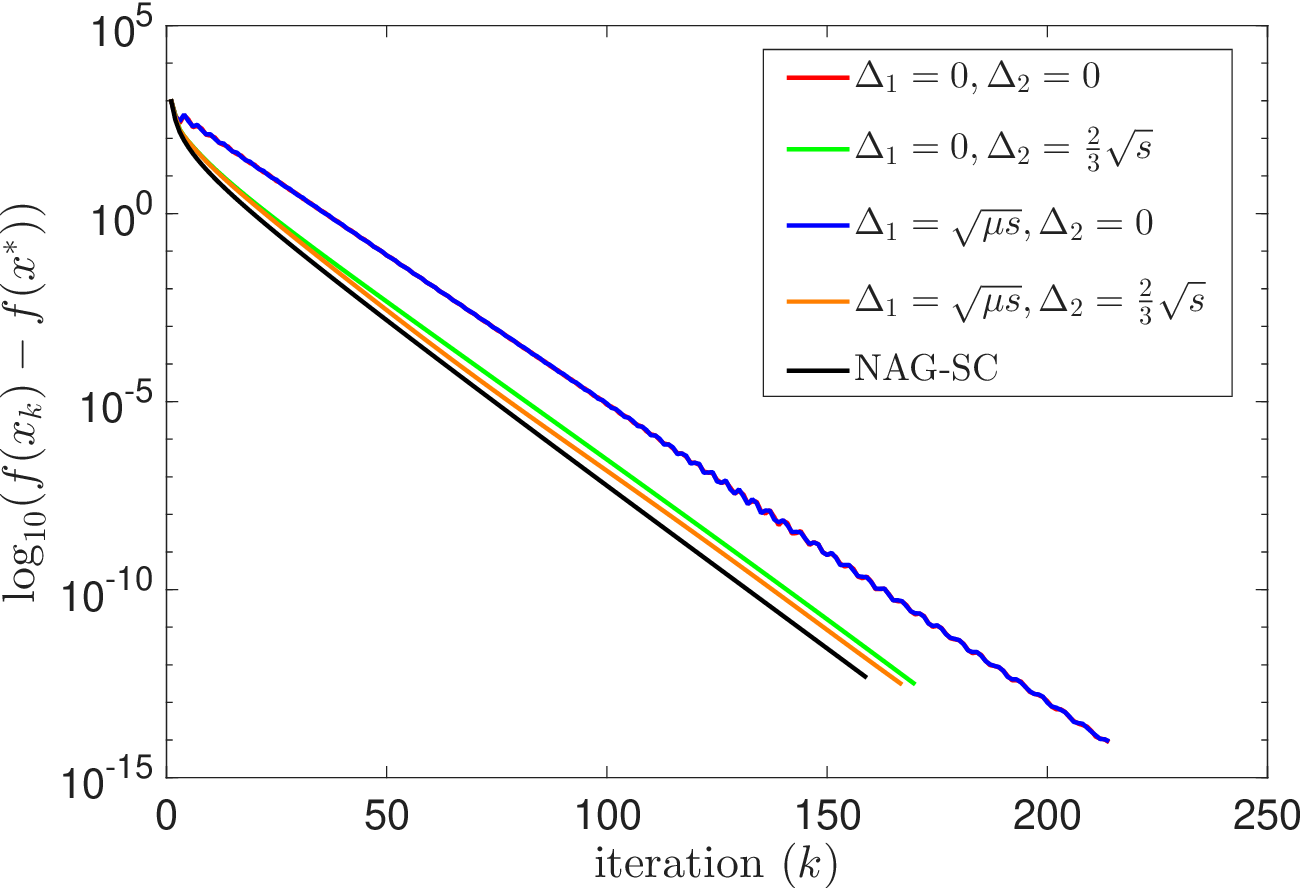}
    }    
    \caption{
    Numerical comparisons of scheme \eqref{eq: iteration of direct symplectic discretization} with different $(\widehat{\Delta}_1, \widehat{\Delta}_2)$ on solving problem \eqref{eq: quadratic function for numerical experiments} with $A$ given by \eqref{eq: case matrix}.}
    \label{pic:larger matrix}
\end{figure}

\end{document}